\documentclass{amsart}
\usepackage{amsmath,amsfonts,amssymb}
\usepackage{latexsym}
\usepackage{epsfig,overpic}
\usepackage{stmaryrd}
\usepackage{color}
\usepackage{graphicx}

\numberwithin{equation}{section}

  \newcounter{mnote}
  \setcounter{mnote}{0}
  
  \let\oldmarginpar\marginpar
    \renewcommand\marginpar[1]{\-\oldmarginpar[\raggedleft\footnotesize #1]%
    {\raggedright\footnotesize #1}}

\def\XXint#1#2#3{{\setbox0=\hbox{$#1{#2#3}{\int}$ }
\vcenter{\hbox{$#2#3$ }}\kern-.6\wd0}}

%=======================================================================
%   Macros
%=======================================================================

%
\newcommand{\R}{\mathcal R}
\newcommand{\E}{\mathcal E}
\newcommand{\T}{\mathcal T}
\newcommand{\A}{\mathcal A}

\newcommand{\Div}{\mathop{\rm div}}
\newcommand{\curl}{\mathop{\rm curl}}

\newcommand{\rot}{\mathop{\rm rot}}

\definecolor{darkred}{rgb}{.7,0,0}

\definecolor{green}{rgb}{0,0.7,0}

\def\B{\mathfrak{B}}
\def\H{\mathfrak{H}}
\def\Z{\mathfrak{Z}}

\def\M{\mathcal{M}}

\def\trace{\hspace{1pt}{\rm tr}\hspace{2pt}}

\newcommand{\la}{\langle}
\newcommand{\ra}{\rangle}

\newtheorem{theorem}{Theorem}
\newtheorem{lemma}[theorem]{Lemma}
\newtheorem{corollary}[theorem]{Corollary}
\newtheorem{proposition}[theorem]{Proposition}
\newtheorem{remark}{Remark}

%\newtheorem{corollary}{Corollary}

%========================================================================
%   Main document
%========================================================================

\title[AFEM for harmonic forms]{Convergence and quasi-optimality of adaptive finite element methods for harmonic forms}

\author[A.~Demlow]{Alan Demlow$^1$}

\thanks{$^1$Partially supported by National Science Foundation grant DMS-1518925.}

\address{
  Department of Mathematics, Texas A\&M University, College Station, TX  77843
  }

\email{demlow@math.tamu.edu}

\keywords{Finite element methods, exterior calculus, a posteriori error estimates, adaptivity, harmonic forms, harmonic fields}

%\begin{AMS}
%  58J32, 65N15, 65N30, 76D45, 76T99
%\end{AMS}
\subjclass{65N15, 65N30}

\begin{document}

\begin{abstract}  Numerical computation of harmonic forms (typically called harmonic fields in three space dimensions) arises in various areas, including computer graphics and computational electromagnetics.  The finite element exterior calculus framework also relies extensively on accurate computation of harmonic forms.  In this work we study the convergence properties of adaptive finite element methods (AFEM) for computing harmonic forms.  We show that a properly defined AFEM is contractive and achieves optimal convergence rate beginning from any initial conforming mesh.  This result is contrasted with related AFEM convergence results for elliptic eigenvalue problems, where the initial mesh must be sufficiently fine in order for AFEM to achieve any provable convergence rate.  
 \end{abstract}

\maketitle

%=============================================================================
%   Introduction
%=============================================================================

\section{Introduction}

In this paper we prove convergence and rate-optimality of adaptive finite element methods (AFEM) for computing harmonic forms.  Let $\Omega \subset \mathbb{R}^3$ be a polyhedral domain with Lipschitz boundary $\partial \Omega$.  The space $\H^1$ of harmonic forms (the first de Rham cohomology group in $\mathbb{R}^3$) is% the finite-dimensional function space
\begin{align}
\label{eq0-1}
\H^1 = \{ {\bf v} \in L_2(\Omega)^3 | \curl {\bf v} = {\bf 0}, ~ \Div ({\bf v}) = 0, ~ {\bf v} \cdot {\bf n} = 0  \hbox{ on } \partial \Omega \}.
\end{align}
Here ${\bf n}$ is the outward unit normal on $\partial \Omega$.  %Also, $A$ is a symmetric, uniformly positive definite matrix having entries in $L_\infty(\Omega)$ which in physical applications is the magnetic permeability.  
$\beta^1 :={\rm dim}\hspace{2pt}\H^1<\infty$ is equal to the number of handles of the domain $\Omega$, so $\beta^1=0$ if $\Omega$ is simply connected.   We denote by $\{q^1, ..., q^{\beta^1}\}$ a fixed $L_2$-orthonormal basis for $\H^1$.  

Computation of harmonic forms arises in applications including computer graphics and surface processing \cite{XZCX09, FSDH07} and numerical solution of PDE related to problems having Hodge-Laplace structure posed on domains with nontrivial topology.  Important practical examples of the latter type arise in boundary and finite element methods for electromagnetic problems.  There computation of harmonic fields may arise as a necessary precursor step which in essence factors nontrivial topology out of subsequent computations.  We refer to \cite{Hip02, HO02, RV10, RBGV13, TV14, BS15} for general discussion, topologically motivated algorithms for efficient computation of $\H^1$, and specific applications requiring such a step.  In addition, harmonic fields on spherical subdomains play an important role in understanding singularity structure of solutions to Maxwell's equations on polyhedral domains \cite[Section 6.4]{CD00}.  Computation of harmonic forms is also an important part of the  finite element exterior calculus (FEEC) framework.  The FEEC framework provides a systematic exposition of the tools needed to stably solve Hodge-Laplace problems related to the de Rham complex and other differential complexes \cite{AFW06, AFW10}.  It thus has provided a broader exposition of many of the tools needed for the numerical analysis of Maxwell's equations.  The mixed methods used in these works solve the Hodge-Laplace problem modulo harmonic forms, so as above their accurate computation is a prerequisite for accurate computation of Hodge-Laplace solutions.  This is true of adaptive computation of solutions to Hodge-Laplace problems also \cite{DH14, MHS13}.  Our results thus serve as a necessary precursor to study of convergence of AFEM for solving the full Hodge-Laplace problem as posed within the FEEC framework.     While keeping in mind the concrete representation \eqref{eq0-1}, outside of the introduction we will use the more general notation and tools that have been developed within the FEEC framework.  The results described in the introduction below are valid essentially verbatim in arbitrary space dimension $n$ and for arbitrary cohomology group $\H^k$, $1 \le k \le n-1$.  

We briefly describe our setting.  Let $\T_\ell$, $\ell \ge 0$, be a set of nested, adaptively generated simplicial decompositions of $\Omega$.  Let also $V_\ell^0 \subset H^1(\Omega)$ and $V_\ell^1 \subset H(\curl;\Omega)$ be conforming finite element spaces which together satisfy a complex property described in more detail below.  For \eqref{eq0-1} we may take $V_\ell^0$ to be $H^1$-conforming piecewise linear functions and $V_\ell^1$ to be lowest-order N\'ed\'elec edge elements.  Higher-degree analogues also may be used.  The space $\H_\ell^1$ of discrete harmonic fields corresponding to $\H^1$ is then those fields $q_\ell \in V_\ell^1$ satisfying
\begin{align}
\label{eq0-2}
\begin{aligned}
\langle q_\ell,  \nabla \tau_\ell \rangle & = 0, ~\tau_\ell \in V_\ell^{0}, 
\\ \langle \curl q_\ell, \curl v_\ell \rangle &  = 0, ~ v_\ell \in V_\ell^1.
\end{aligned}
\end{align}
Note that while $\curl q_\ell ={\bf 0}$, $q_\ell$ is not generally in $H(\Div; \Omega)$.  The first equation in \eqref{eq0-2} instead implies that $\Div_h q_\ell=0$, where $\Div_h$ is a weakly defined discrete divergence operator which is not generally the same as the restriction of $\Div$ to $V_\ell^1$.  We denote by $\{q_\ell^j\}_{j=1}^{\beta^1}$ a (computed) orthonormal basis for $\H_\ell^1$.  Let also $P_{\H_\ell^1}$ be the $L_2$ projection onto $\H_\ell^1$.  Given $q \in \H^1$, a posteriori error estimates for controlling $\|q-P_{\H_\ell^1} q\|_{L_2(\Omega)}$ were given in \cite{DH14}.  From these we may generate an adaptive finite element method having the standard form $\textsf{solve} \rightarrow \textsf{estimate} \rightarrow \textsf{mark} \rightarrow \textsf{refine}$ for controlling the defect between $\H^1$ and $\H_\ell^1$.  

There are important technical and conceptual parallels between AFEM for controlling harmonic forms and AFEM for elliptic eigenvalue problems, as both consider approximation of finite-dimensional (invariant) subspaces.  Consider the eigenvalue problem $-\Delta u = \lambda u$ with Dirichlet boundary conditions.  Assume that $\lambda$ is the $j$-th eigenvalue of the continuous problem.  AFEM employing residual-type error indicators based on the $j$-th discrete eigenvalue $\lambda_\ell$ on the $\ell$-th mesh level yield an approximating sequence $(\lambda_\ell, u_\ell)$ for the $j$-th continuous eigenpair $(\lambda, u)$.  Analyses of AFEM for elliptic eigenvalue problems have focused on two convergence regimes.    The first is a preasymptotic regime in which the method converges, but with no provable rate.  In \cite{GMZ09} it was proved that starting from {\it any} initial mesh $\T_0$, $(\lambda_\ell, u_\ell) \rightarrow (\lambda, u)$ in $\mathbb{R} \times H_0^1(\Omega)$, but with no guaranteed convergence rate.  In the second convergence regime the eigenvalue problem is effectively linearized, and the convergence behavior is that of a source problem.  More precisely, it the initial mesh $\T_0$ is sufficiently fine,  then the AFEM is contractive and achieves an optimal convergence rate \cite{DXZ08, GG09,  DHZ15, Gal15, BD15}.  The plain convergence results of \cite{GMZ09} guarantee that such a state is initially reached from any initial mesh $\T_0$.  

Our results below indicate that the convergence behavior of harmonic forms essentially begins in a transition region in which the AFEM contracts, but with contraction constant that may improve as the overlap between $\H^1$ and $\H_\ell^1$ increases.  A regime in which AFEM contracts with constants independent of essential quantities is then eventually reached, as for AFEM for eigenvalue problems.  It is to be expected that AFEM for harmonic forms contract from any initial mesh as computation of harmonic forms reduces to solving $Au=0$ with $A$ a linear operator.  That is, harmonic forms are eigenfunctions with {\it known} eigenvalue and their computation is essentially  a linear problem.    On the other hand, we have set as our main goal the production of an {\it orthonormal} basis for $\H^1$ and do not assume any particular alignment or method of production for the basis.  The problem of producing an orthonormal basis is mildly nonlinear, so it is reasonable that the contraction constant can improve as the mesh is refined.  As we discuss in \S\ref{subsec:cutting}, alternate methods for producing and aligning the discrete and continuous bases may lead to AFEM with different properties.  Our framework has the advantage of being completely generic with respect to the method used to compute $\H_\ell^1$.

There are two main challenges in proving contraction of AFEM for eigenspaces.  These are lack of orthogonality caused by non-nestedness of the discrete eigenspaces, and lack of alignment of computed eigenbases at adjacent discrete levels.  In the context of elliptic eigenvalue problems, suitable nestedness and orthogonality is recovered only on sufficiently fine meshes as the nonlinearity of the problem is resolved.  In the case of harmonic forms a novel situation arises.  The discrete spaces $\{H_\ell^1\}$ of harmonic forms are not themselves nested.   However, as we show below the Hodge decomposition nonetheless guarantees sufficient nestedness and orthogonality uniformly starting from any initial mesh.  In essence, {\it topological} resolution of $\Omega$ by $\T_0$ is sufficient to ensure some {\it analytical} resolution of $\H^1$ by $\H_0^1$. 

Lack of alignment between discrete bases at adjacent mesh levels occurs in the case of multi-dimensional target subspaces, including multiple or clustered eigenvalues and harmonic forms when $\beta>1$.  Standard AFEM convergence proofs employ ``indicator continuity'' arguments which in our case would require comparing discrete basis members $q_\ell^j$ and $q_{\ell+1}^j$ on adjacent mesh levels.  When $\beta^1>1$, $q_\ell^j$ and $q_\ell^{j+1}$ may not be meaningfully related.  To overcome the difficulty of multiple eigenvalues, we follow \cite{DHZ15, Gal15, BD15} in using a non-computable error estimator $\mu_\ell$ calculated with respect to projections $P_\ell q^j$ of a {\it fixed} basis for the continuous harmonic forms.  Indicator continuity arguments apply to these theoretical error indicators, which must in turn be compared to the practical ones based on $\{q_\ell^j\}$.  We follow \cite{BD15} in establishing an equivalence between the theoretical and practical indicators with constants asymptotically independent of essential quantities.

We now briefly describe our results.  We first show that there exists $\gamma>0$ and $0< \rho <1$ such that 
\begin{align}
\label{intro:contraction}
\sum_{j=1}^{\beta^1} \|q^j-P_{\ell+1} q^j\|^2 + \gamma \mu_{\ell+1}^2 \le \rho \left (\sum_{j=1}^{\beta^1} \|q^j - P_{i} q^j\|^2 + \gamma \mu_\ell^2 \right ).
\end{align}
While we may take $\gamma, \rho$ above independent of mesh level, our proof indicates that $\rho$ may in fact improve (decrease) as the overlap between $\H^1$ and $\H_\ell^1$ improves.  Because a contraction occurs from the initial mesh with fixed constant $\rho<1$, this improvement in overlap is guaranteed to occur.  Thus our result lies between standard results for elliptic source problems in which contraction occurs from the initial mesh with fixed contraction constant, and AFEM convergence results for elliptic eigenvalue problems for which sufficient overlap between discrete and continuous eigenspaces is not guaranteed unless the initial mesh is sufficiently fine.  

We additionally prove rate-optimality, or more precisely that
\begin{align}
\label{intro_optimality}
 \left ( \sum_{j=1}^{\beta^1} \|q^j-P_{\ell+1} q^j\|^2 \right )^{1/2} \le C (\# \T_\ell -\# \T_0)^{-s}
\end{align}
whenever systematic bisection is able to produce a sequence of meshes that similarly approximates $\H^1$ with rate $s$.  As is typical for AFEM results, our proof of \eqref{intro_optimality} requires that the D\"orfler (bulk) marking parameter that specifies the fraction of elements to be refined in each step of the algorithm be sufficiently small.  We show that the threshold value for the D\"orfler parameter is independent of all essential quantities, including the initial resolution of $\H^1$ by $\H_\ell^1$.  This situation is typical of elliptic source problems; cf. \cite{CKNS08}.  In contrast to elliptic source problems, however, the constant $C$ may depend on the quality of approximation of $\H^1$ by $\H_\ell^1$.  Finally, proof of rate optimality requires establishing a localized a posteriori upper bound for the defect between the target spaces on nested meshes.  This step is substantially more involved for general problems of Hodge-Laplace type than for standard scalar problems.  Proof of localized upper bounds has been previously given for Maxwell's equations \cite{ZCSWX11}, but the necessary tools have not previously appeared in a form suitable for our purposes.  Our approach below is valid for arbitrary form degree and space dimension and generically for problems of Hodge-Laplace type.  In addition to being more general, our proof is also slightly simpler due to its use of the recently-defined quasi-interpolant of Falk and Winther \cite{FW14}.

The remainder of the paper is organized as follows.  In \S\ref{sec:deRham} we describe continuous and discrete spaces of differential forms, interpolants into discrete spaces of differential forms, and existing a posteriori estimates for harmonic forms.  In \S\ref{sec:L2} we give a number of preliminary results concerning the Hodge decomposition and $L_2$ projections described above.  Section \S\ref{sec:contraction} and \S\ref{sec:optimality} contain statements, discussion, and proofs of our contraction and optimality results.  Section \S\ref{sec:extensions} contains brief discussion of essential boundary conditions and harmonic forms in the presence of coefficients, and also of the effects of methods for computing harmonic fields on the resulting AFEM.  Finally, in \S\ref{sec:numerics} we present numerical results illustrating our theory.

\section{The de Rham complex and its finite element approximation}
\label{sec:deRham}

In this section we generalize the classical function space setting from the introduction to arbitrary space dimension and form degree and introduce corresponding finite element spaces and tools.  We for the most part follow \cite{AFW10} in our notation and refer to that work for more detail.

\subsection{The de Rham complex}

Let $\Omega$ be a bounded Lipschitz polyhedral domain in $\mathbb{R}^n$, $n \ge 2$.   Let $\Lambda^k(\Omega)$ represent the space of smooth $k$-forms on $\Omega$.  The natural $L_2$ inner product is denoted by $\la \cdot, \cdot \ra$, the $L_2$ norm by $\| \cdot \|$, and the corresponding space by $L_2 \Lambda^k(\Omega)$.  We let $d$ be the exterior derivative, and $H \Lambda^k(\Omega)$ be the domain of $d^k$ consisting of $L_2$ forms $\omega$ for which $d \omega \in L_2 \Lambda^{k+1}(\Omega)$.  We denote by $\|\cdot \|_H$ the associated graph norm; here one may concretely think of $H$ as $H(\curl)$, $H(\Div)$, or $H^1$.   %In order to include coefficients in our analysis, let $A^k: L_2 \Lambda^k \rightarrow L_2\Lambda^k$ be bounded, symmetric, and positive definite.  We may then substitute the $L_2$ inner product $\la \cdot, \cdot \ra_A = \la A^k \cdot, \cdot \ra$ in all of the above definitions.  We shall assume throughout that this has been done, and suppress the dependence of the inner product on $A$ unless it causes confusion.  
%$(L_2 \Lambda^k(\Omega), d)$ forms a Hilbert complex (cf. \cite{ArFaWi2010} for details) with domain complex 
%\begin{align}\label{eq3-1}
%0 \, {\rightarrow}\,  H \Lambda^0(\Omega) \overset{d}{\rightarrow} H \Lambda^1(\Omega) \overset{d}{\rightarrow} \cdots \overset{d}{\rightarrow} H \Lambda^n(\Omega) \rightarrow 0
%\end{align} 
%corresponding to $(V,d)$ above.   
We denote by $W_p^r \Lambda^k(\Omega)$ the corresponding Sobolev spaces of forms and set $H^r \Lambda^k(\Omega) = W_2^r \Lambda^k (\Omega)$.  Finally, for $\omega \subset \mathbb{R}^n$, we let $\|\cdot\|_\omega=\|\cdot \|_{L_2 \Lambda^k(\omega)}$ and $\|\cdot \|_{H, \omega}=\|\cdot \|_{H \Lambda^k(\omega)}$; in both cases we omit $\omega$ when $\omega=\Omega$.    
 
 Denote by $\delta$ the codifferential, that is, the adjoint of the exterior derivative $d$ with respect to $\langle \cdot, \cdot \rangle$.% (which we now take to be the $L_2$ inner product with coefficient $A$).  
 The space of harmonic $k$-forms is given by
 \begin{align}
 \label{eq2-1}
 \H^k = \{q \in H \Lambda^k(\Omega): dq=0, ~\delta q=0, ~\trace \star q=0\}.
 \end{align}
Here $\trace$ is the trace operator and $\star$ the Hodge star operator.  We denote by 
\begin{align}
\beta^k = {\rm dim} \hspace{2pt} \H^k
\end{align}
the $k$-th Betti number of $\Omega$.  When $n=3$, $\beta^0=1$, $\beta^1$ is the number of holes in $\Omega$, $\beta^2$ the number of voids, and $\beta^3=0$.  In general, $\beta<\infty$.  We additionally let $\B^k = d H\Lambda^{k-1}$ be the range of $d^{k-1}$, $\Z^k = {\rm kernel} d^k$ the kernel of $d^k$, and by $\Z^{k \perp}$ the orthogonal complement of $\Z^k$ in $H\Lambda^k$.  We have $\Z^k= \H^k \oplus \B^k$, and the Hodge decomposition is given by $H \Lambda^k = \B^k \oplus \H^k \oplus \Z^{k \perp}$.  Note that $\B^k \subset \Z^k$, that is, $d \circ d=0$.

\subsection{Meshes and mesh properties}

We employ, and now briefly review, the conforming simplicial mesh refinement framework commonly employed in AFEM convergence theory; cf. \cite{Ste08} for details.  Let $\T_0$ be a conforming, shape regular simplicial decomposition of $\Omega$.  By fixing a local numbering of all vertices of all $T \in \T_0$, all possible descendants $\T$ of $\T_0$ that can be created by newest vertex bisection are uniquely determined.  By newest vertex bisection we mean either the refinement procedure as it was developed in two space dimensions or its generalization to any space dimension.  The simplices in any of those partitions are {\em uniformly shape regular}, dependent only on the shape regularity parameters of $\T_0$ and the dimension $n$.  Generally a descendant of $\T_0$ is non-conforming.  Our AFEM will generate a nested sequence $\T_0 \subset \T_1 \subset \T_2...$ of conforming meshes which we index by $\ell$.  Given marked sets $\M_\ell \subset \T_\ell$, conforming bisection thus refines all $T \in \M_\ell$ and then additional elements in order to ensure that $\T_{\ell+1}$ is conforming.  With a suitable numbering of the vertices in the initial partition, the total number of refinements needed to make the sequence $\{\T_\ell\}$ conforming can be bounded by the number of marked elements.  More precisely, for any $\underline{\ell} \ge 0$, there is a constant $C_{ref,\underline{\ell}}$ such that for $\ell > \underline{\ell}$, 
\begin{align}
\label{numrefined}
\# \T_\ell - \# \T_{\underline{\ell}} \le C_{ref, \underline{\ell}} \sum_{i=\underline{\ell}}^{\ell-1} \# \M_i
\end{align} 
with constant depending on $\T_{\underline{\ell}}$.  Such an initial numbering always exists, possibly after an initial uniform refinement of $\T_0$.  Assuming such a numbering, we denote the set of all {\em conforming} descendants $\T$ of $\T_0$ by $\mathbb{T}$.
For $\T, \tilde{\T} \in \mathbb{T}$, we write $\T \subset \tilde{\T}$  when $\tilde{\T}$ is a refinement of $\T$. Finally, for $T \in \T \in \mathbb{T}$ we let $h_T=|T|^{1/n}$.

\subsection{Approximation of the de Rham complex and interpolants}
\label{sec:interpolants}

Given $\T \in \mathbb{T}$, let $...V_\T^{k-1}  \rightarrow V_\T^k \rightarrow V_\T^{k+1} \rightarrow ...$ be an approximating subcomplex of the de Rham complex with underlying mesh $\T$.  That is, $V_\T^k \subset H\Lambda^k(\Omega)$, $v \in V_\T^k$ is polynomial on each $T \in \T$, and $d V_\T^{k-1} \subset V_\T^k$.  When $\T = \T_\ell$ we use the abbreviation $V_{\T_\ell} = V_\ell$ (where here we also depress the dependence on $k$).  We refer to \cite{AFW10} for descriptions of the relevant spaces.  In three space dimensions, we may think for example of standard Lagrange spaces approximating $H\Lambda^0 =H^1$, N\'edel\'ec spaces approximating $H(\curl)$, and Raviart-Thomas or BDM spaces approximating $H(\Div)$, with appropriate relationships between polynomial degrees enforced to ensure commutation properties.  

In addition to the subcomplex property, we also require the existence of a commuting interpolant (or more abstractly, commuting cochain projection) with certain properties.  Desirable properties include commutativity, boundedness on the function spaces $L_2\Lambda$ and/or $H\Lambda$ that are natural for our setting, local definition of the operator, and projectivity, and a local regular decomposition propertyL.   Several recent papers have achieved constructions possessing some of these properties \cite{Sch01, Sch08, CW08, FW14}, though no one operator has been shown to possess all of them.  We use two different such constructions below along with a standard Scott-Zhang interpolant.  First, the commuting projection operator $\pi_{CW}: L_2\Lambda(\Omega) \rightarrow V_\T$ of Christiansen and Winther \cite{CW08} has the following useful properties:
\begin{align}
\label{CW_interp}
d^k \pi_{CW}^k = \pi_{CW}^{k+1} d^k, ~~\pi_{CW}^2 = \pi_{CW}, ~~\|\pi_{CW} v \|_{L_2(\Omega)} \lesssim \|v\|_{L_2(\Omega)}~ \forall~ \omega \in L_2\Lambda(\Omega).
\end{align}
The Christiansen-Winther interpolant also can be modified to preserve homogeneous essential boundary conditions with no change in its other properties.  

The commuting projection operator $\pi_{FW}$ defined by Falk and Winther in \cite{FW14} has the following properties.  First, it commutes:
\begin{align}
\label{FW_interp1}
d^k \pi_{FW}^k = \pi_{FW}^{k+1} d^k.
\end{align}
Second, given $T \in \T$, there is a patch of elements surrounding $T$ such that
\begin{align}
\label{FW_count}
\#(T \subset \omega_T) \lesssim 1.
\end{align}
and for $v \in H \Lambda(\Omega)$, 
\begin{align}
\label{FW_interp2}
\|\pi_{FW} v\|_{T} \lesssim \|v\|_{\omega_T} + h_T \|d \pi_{FW} v\|_{\omega_T}, ~~ \|\pi_{FW}v \|_{H\Lambda(T)} \lesssim \|v\|_{H\Lambda(\omega_T)}.  
\end{align}
The patch $\omega_T$ is not necessarily the standard patch of elements sharing a vertex with $T$, and its configuration may depend on form degree.  The relationship \eqref{FW_count} is however the essential one for our purposes. Given $v \in H^1(\omega_T)$, there also follows the interpolation estimate 
\begin{align}
\label{FW_interp4}
h_T^{-1} \|v-\pi_{FW} v\|_T + |v-\pi_{FW} v|_{H^1 \Lambda(T)}  \lesssim |v|_{H^1 \Lambda(\omega_T)}.  
\end{align}
Finally, $\pi_{FW}$ is a local projection in the sense that
\begin{align}
\label{FW_interp3}
 u|_{\omega_T} \in V_\T|_{\omega_T} ~ \Rightarrow ~(u-\pi_{FW}u)|_T \equiv 0.
 \end{align}
In summary, $\pi_{FW}$ and $\pi_{CW}$ are both commuting projection operators.  $\pi_{FW}$ is however locally defined, but only stable in $H\Lambda$, while $\pi_{CW}$ is globally defined but stable in $L_2 \Lambda$.  Also, to our knowledge there is no detailed discussion in the literature of the modifications needed to construct a version of $\pi_{FW}$ which preserves essential boundary conditions, although a comment in \cite{FW14} indicates that such an adaptation is natural.  

We shall also employ an $L_2$-stable Scott-Zhang operator $I_{SZ}:L_2 \Lambda^k(\Omega) \rightarrow \tilde{V}_\T^k \subset H^1 \Lambda^k(\Omega)$ \cite{SZ90}. Here $\tilde{V}_\T^k$ is the smallest space of forms containing all forms with continuous piecewise linear coefficients.  $I_{SZ}$ may be obtained by employing the standard scalar Scott-Zhang interpolant coefficientwise.  We then have for $u \in L_2\Lambda(\Omega)$
\begin{align}
\label{SZ_interp}
\begin{aligned}
\|I_{SZ} u \|_{L_2\Lambda (T)} & \lesssim \|u\|_{L_2\Lambda (\omega_T)}, 
\\ h_T^{-1} \|u- I_{SZ}u\|_{L_2 \Lambda(T)} + |u-I_{SZ} u|_{H^1\Lambda(T)} & \lesssim  |u|_{H^1\Lambda(\omega_T)}.
\end{aligned}
\end{align}

We shall finally use a regular decomposition property, which is given in \cite[Lemma 5]{DH14} (this lemma is largely a reformulation of more general results given in \cite{MiMiMo08}).  Given a form $v \in H\Lambda^k(\Omega)$, there are $\varphi \in H^1 \Lambda^{k-1}(\Omega)$ and $z \in H^1 \Lambda^k(\Omega)$ such that
\begin{align}
\label{reg_decomp}
v=d\varphi + z, ~~\|\varphi \|_{H^1 \Lambda^{k-1}(\Omega)} + \|z\|_{H^1 \Lambda^k(\Omega)} \lesssim \|v\|_{H\Lambda^k(\Omega)}.
\end{align}

\subsection{Discrete harmonic forms}

 Let $\H_\T^k \subset V_\T^k$ be the set of discrete harmonic forms, which is more precisely defined as those $q_\T \in V_\T^k$ satisfying

%  \mnote{ do you prefer the original version or the simplified version when define harmonic forms? }
% which there exists $\sigma_\ell \in V_\ell^{k-1}$ with
%\begin{align}
%\begin{aligned}
%\langle \sigma_\ell, \tau \rangle & = \langle d \tau, q_\ell \rangle, ~\tau \in V_\ell^{k-1}, 
%\\ \langle d \sigma_\ell, v \rangle + \langle d q_\ell, d v \rangle &  = 0, ~ v \in V_\ell^k.
%\end{aligned}
%\end{align}
\begin{align}
\begin{aligned}
\langle q_\T, d\tau_\T \rangle & = 0, ~\forall \tau_\T \in V_\T^{k-1}, 
\\ \langle d q_\T, d v_\T \rangle &  = 0, ~ \forall v_\T \in V_\T^k.
\end{aligned}
\end{align}
%If we introduce the discrete codifferential $\delta_\T:V^{k+1} \rightarrow V^k$ as the adjoint of $d:V^k \rightarrow V^{k+1}$ with respect to the $L^2$-inner product, these two equations can be simply written as
%\begin{equation}
%\delta_\T q_\T = 0, \quad dq_\T = 0.
%\end{equation}
%Note that while $d q_\T=0$ holds strongly, $q_\T$ is not in the domain of the continuous codifferential $\delta$.  Thus $\delta_\T \neq \delta$, and the first equation $\delta_\T q_\T= 0$ holds weakly. 
The discrete Hodge decomposition $V_\T^k = \B_\T^k \oplus \H_\T^k \oplus \Z_\T^{k, \perp}$  ($\Z_\T^k = \B_\T^k \oplus \H_\T^k = {\rm kern} \hspace{2pt} d|_{V_\T^k}$) is defined entirely analogously to the continuous version.  Note however that while
\begin{align}
\Z_\T^k \subset \Z^k \hbox{ and } \B_\T^k \subset \B^k,
\end{align}
there holds
\begin{align}
\H_\T^k \not\subset \H^k \hbox{ and } \Z_\T^{k, \perp} \not\subset \Z^{k, \perp}.
\end{align}

We briefly summarize the index system we use. The integer $n$ is the dimension of the domain $\Omega$. The integer $k$, $0\leq k\leq n$, is the order of differential form. The subscript $\ell$, $\ell=0,1,2\ldots$ is used for a sequence of triangulations of the domain $\Omega$ and the corresponding finite element spaces. For a fixed $k$, $0\leq k\leq n$, we consider the convergence of the sequence $\{q_\ell, \ell=0,1,2,\ldots \}$. Therefore we may suppress the index $k$ when no confusion can arise. $j$ is used to index the bases of $\H$ and $\H_\T$. 
%\mnote{ I change the index $j$ to $l$ as $(i,j)$ and $(l,k)$ are brothers.}

\section{Properties of an $L_2$ Projection}
\label{sec:L2}

We first specify the error quantity that we seek to control.  Let $\{q^j\}_{j=1}^{\beta}$ be an orthonormal basis for $\H$, and let $\{q_\T^j\}_{j=1}^{\beta}$ be an orthonormal basis for $\H_\T$.  Denote by $P$ the $L_2$ projection onto $\H$ and by $P_\T = P_{\H_\T}$ the $L_2$ projection onto $\H_\T$.  As above we use the abbreviation $P_\ell = P_{\T_\ell}$ and similarly for $q_\ell^j$.  
We seek to control the subspace defect
\begin{align}
\label{eq2-9}
\E_\T := \left ( \sum_{j=1}^{\beta} \|q^j-P_\T q^j\|^2 \right )^{1/2}  .
\end{align}
\begin{proposition}
\begin{align}
\label{eq2-9-a}
\left ( \sum_{j=1}^{\beta} \|q^j-P_\T q^j\|^2 \right )^{1/2} = \left ( \sum_{j=1}^{\beta} \|q_\T^j-P q_\T^j\|^2 \right )^{1/2}.
\end{align}
\end{proposition}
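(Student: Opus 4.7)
The plan is to recognize that both sides measure the ``gap'' between the two $\beta$-dimensional subspaces $\H$ and $\H_\T$ in a symmetric way, and to establish this symmetry by a short computation using that $P$ and $P_\T$ are orthogonal projections.

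First I would reduce each term in the sum on the left by the Pythagorean identity for orthogonal projections: since $\langle q^j, P_\T q^j\rangle = \|P_\T q^j\|^2$, one gets
\[
\|q^j - P_\T q^j\|^2 = \|q^j\|^2 - \|P_\T q^j\|^2 = 1 - \|P_\T q^j\|^2,
\]
and the analogous identity $\|q_\T^i - P q_\T^i\|^2 = 1 - \|P q_\T^i\|^2$ on the right. Thus the claim reduces to
\[
\sum_{j=1}^\beta \|P_\T q^j\|^2 = \sum_{i=1}^\beta \|P q_\T^i\|^2.
\]

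Next I would expand each projection in the corresponding orthonormal basis. Since $\{q_\T^i\}_{i=1}^\beta$ is an orthonormal basis of $\H_\T$, we have $P_\T q^j = \sum_{i=1}^\beta \langle q^j, q_\T^i\rangle\, q_\T^i$, so $\|P_\T q^j\|^2 = \sum_{i=1}^\beta \langle q^j, q_\T^i\rangle^2$. Symmetrically, $\|P q_\T^i\|^2 = \sum_{j=1}^\beta \langle q_\T^i, q^j\rangle^2$. Summing and interchanging the (finite) double sum yields
\[
\sum_{j=1}^\beta \|P_\T q^j\|^2 = \sum_{j=1}^\beta \sum_{i=1}^\beta \langle q^j, q_\T^i\rangle^2 = \sum_{i=1}^\beta \|P q_\T^i\|^2,
\]
which closes the argument. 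There is no real obstacle; the only subtlety worth flagging is that the common value $\beta - \sum_{i,j}\langle q^j, q_\T^i\rangle^2$ is manifestly independent of the particular choice of orthonormal bases on either side, consistent with the fact that $\E_\T$ is really a subspace gap quantity (equivalently, the Hilbert--Schmidt norm squared of $(I-P_\T)P = P - P_\T P$, which coincides with that of its adjoint $P(I-P_\T)$).
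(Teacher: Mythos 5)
Your proof is correct and follows essentially the same route as the paper's: reduce via $\|q^j - P_\T q^j\|^2 = 1 - \|P_\T q^j\|^2$ and the analogous identity on the other side, expand each projection in the corresponding orthonormal basis, and interchange the resulting finite double sum $\sum_{i,j}\langle q^j, q_\T^i\rangle^2$. The closing remark about the Hilbert--Schmidt interpretation is a nice observation but not part of the paper's argument.
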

\begin{proof}
There holds $P_\T q^j = \sum_{m=1}^{\beta} \langle q^j, q_\T^m \rangle q_\T^m$ and $\|P_\T q^j\| ^2= \sum_{m=1}^\beta \langle q^j, q_\T^m \rangle^2$, and similarly $\|P q_\T^m\|^2 = \sum_{j=1}^\beta \langle q_\T^m, q^j \rangle^2$. Also, $\|q^j-P_\T q^j\|^2 = \langle q^j-P_\T q^j, q^j-P_\T q^j \rangle = 1-\|P_\T q^j\|^2$ and similarly for $\|q_\T^m - P q_\T^m\|^2$.  Thus
\begin{align}
\label{eq2-9-c}
\sum_{j=1}^{\beta} \|q^j-P_\T q^j\|^2 = \sum_{j=1}^{\beta} \left (1-\sum_{m=1}^\beta \langle q^j, q_\T^m \rangle^2 \right ) = \sum_{m=1}^{\beta} \|q_\T^m - P q_\T^m\|^2.
\end{align}
\end{proof}

Previous error estimates for errors in harmonic forms have instead controlled the {\it gap} between $\H$ and $H_\ell$.  Given two finite-dimensional subspaces $A, B$ of the same ambient Hilbert space with ${\rm dim} \hspace{2pt} A = {\rm dim} \hspace{2pt} B$, we define
\begin{align}
\delta(A,B)=\sup_{x \in A, \|x\|=1} \|x-P_B x\|, ~~~~{\rm gap}(A,B) = \max\{ \delta(A,B), \delta(B,A)\}.
\end{align} 
There also holds in this case
\begin{align}
\label{gapequiv}
\delta(A,B)=\delta(B,A).  
\end{align}

%The correct quantity to control in the context of convergences of (eigen)spaces is the gap of $\H^k$ and $\H_\ell^k$. Given closed subspaces $A,B$ of a Hilbert space $W$, let
%\begin{align}\label{eq2-10}
%  \delta(A,B)=\sup_{x \in A, \|x\|=1} {\rm dist}(x, B)=\sup_{x \in A,
%    \|x\|=1} \|x-P_{B}x\|.
%  \end{align}
%The gap between the subspaces $A$ and $B$ is defined as 
%\begin{align}  \label{eq2-11}
%  {\rm gap}(A,B)=\max(\delta(A,B), \delta(B, A)).
%\end{align}
%If also $\dim A=\dim B <\infty$, then
%\begin{align}  \label{eq2-11}
%  {\rm gap}(A,B)=\delta(A,B)=\delta(B, A).
%\end{align}
Let $P_{\Z_\T}$ be the $L_2$ projection onto $\Z_\T$. 
\begin{lemma}
\begin{align}
\label{eq3-4}
P_\T q = P_{\Z_\T} q, \quad \text{for all } q\in \H.
\end{align}
\end{lemma}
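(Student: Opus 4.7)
The plan is to exploit the discrete Hodge decomposition to split $P_{\Z_\T}$ into pieces and then use the continuous Hodge decomposition together with the nesting $\B_\T \subset \B$ to kill the unwanted piece.

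First I would write $\Z_\T = \B_\T \oplus \H_\T$ from the discrete Hodge decomposition recalled in the previous subsection. Since this decomposition is $L_2$-orthogonal (the two summands are orthogonal subspaces of $V_\T^k$ with respect to $\langle\cdot,\cdot\rangle$), the $L_2$ projection onto $\Z_\T$ splits additively as
\begin{equation*}
P_{\Z_\T} = P_{\B_\T} + P_{\H_\T} = P_{\B_\T} + P_\T.
\end{equation*}
Thus the identity \eqref{eq3-4} will follow once I verify that $P_{\B_\T} q = 0$ for every $q \in \H$.

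Next I would invoke the inclusion $\B_\T \subset \B$ (stated in the previous subsection as a consequence of the subcomplex property $dV_\T^{k-1} \subset V_\T^k$). Combined with the $L_2$-orthogonality $\H \perp \B$ coming from the continuous Hodge decomposition $H\Lambda^k = \B^k \oplus \H^k \oplus \Z^{k,\perp}$, this gives $\langle q, \eta_\T\rangle = 0$ for all $\eta_\T \in \B_\T$, hence $P_{\B_\T} q = 0$. Substituting into the display above yields $P_{\Z_\T} q = P_\T q$, as required.

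There is no real obstacle here: the argument is essentially a bookkeeping step that records the compatibility between the discrete and continuous Hodge decompositions. The only point that merits a brief mention in the write-up is the orthogonality of the discrete Hodge decomposition in the $L_2$ inner product, which is what allows the projection onto $\Z_\T$ to be split as a sum of projections onto the summands.
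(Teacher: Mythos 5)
Your proposal is correct and follows exactly the same argument as the paper: split $P_{\Z_\T} = P_{\B_\T} + P_{\H_\T}$ using the orthogonality of the discrete Hodge decomposition, then use $\B_\T \subset \B$ and $\H \perp \B$ to conclude $P_{\B_\T} q = 0$. The only difference is that you spell out the orthogonality justification for splitting the projection, which the paper leaves implicit.
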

\begin{proof}
Because $\Z_\T= \B_\T \oplus \H_\T$, we have for $q \in \H$ that $P_{\Z_\T} q = P_{\B_\T} q + P_{\H_T} q$.  But $\B_\T \subset \B$ and $\H \perp \B$, so $P_{\B_\T} q=0$.  
\end{proof}

%\begin{proof}
%For $z_\T\in \H_\T \subset \Z_\T$, by definition of the $L^2$ projection we have
%$$
%(P_\T q, z_\T) = (q, z_\T) = (P_{\Z_\T} q, z_\T).
%$$
%For $z_\T \in \B_\T$, since $\B_\T \bot \H_\T$ and $P_\T q\in \H_\T$, we have
%$(P_\T q, z_\T) = 0$. On the other hand, since $\B_\T \subset \B \bot \H$, we have
%$(P_{\Z_\T} q, z_\T) = (q, z_\T) = 0$ when $z_\T\in \B_\T$.
%
%We thus have proved $(P_\T q, z_\T) = (P_{\Z_\T} q, z_\T)$ for all $z_\T\in \Z_\T = \B_\T \oplus \H_\T$ which implies the desired result. 
%\end{proof}

\begin{proposition} \label{prop3} 
Given $\T \subset \T'$ and $q \in \H$, we have $\| P_{T} q\|\leq \|P_{\T'} q \|$.  In particular, $\|P_\ell q\|\le \|P_{\ell+1} q\|$.    
\end{proposition}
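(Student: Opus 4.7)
The plan is to reduce everything to $L_2$ projections onto the discrete cycle spaces $\Z_\T$ using the previous lemma, and then exploit nestedness of the cycle spaces under mesh refinement.

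First I would invoke Lemma \eqref{eq3-4} to rewrite $P_\T q = P_{\Z_\T} q$ and $P_{\T'} q = P_{\Z_{\T'}} q$ for $q \in \H$. This reduces the claim to showing $\|P_{\Z_\T} q\| \le \|P_{\Z_{\T'}} q\|$.

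The key observation is that the discrete cycle spaces are nested: $\Z_\T \subset \Z_{\T'}$ whenever $\T \subset \T'$. Indeed, conforming bisection yields $V_\T^k \subset V_{\T'}^k$, and since $\Z_\T^k = \{v \in V_\T^k : dv = 0\}$, any element of $\Z_\T^k$ automatically lies in $V_{\T'}^k$ with vanishing exterior derivative, hence in $\Z_{\T'}^k$. (By contrast, the harmonic spaces $\H_\T^k$ themselves are \emph{not} nested, which is one of the core difficulties in the paper; the cycle spaces, however, are.)

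Once nestedness is in hand, the conclusion follows from the best-approximation property of $L_2$ projections. Since $\Z_\T \subset \Z_{\T'}$, we have
\begin{equation*}
\|q - P_{\Z_{\T'}} q\| \le \|q - P_{\Z_\T} q\|.
\end{equation*}
Combining this with the Pythagorean identity $\|q\|^2 = \|P_{\Z_\T} q\|^2 + \|q - P_{\Z_\T} q\|^2$ (and likewise for $\Z_{\T'}$) yields $\|P_{\Z_\T} q\| \le \|P_{\Z_{\T'}} q\|$, completing the proof. There is no substantive obstacle here; the statement is really just a packaging of the monotonicity of $L_2$ projections onto nested subspaces, combined with the earlier identification of $P_\T q$ with $P_{\Z_\T} q$ on $\H$. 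The only thing worth verifying carefully is the nestedness $\Z_\T \subset \Z_{\T'}$, which uses both the subcomplex property $dV_\T^{k-1}\subset V_\T^k$ (for defining $\Z_\T^k$) and the fact that newest vertex bisection produces nested finite element spaces.
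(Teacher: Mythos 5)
Your proof is correct and is essentially identical to the paper's: both reduce to the identity $P_\T q = P_{\Z_\T} q$ from the preceding lemma and then invoke nestedness $\Z_\T \subset \Z_{\T'}$, from which monotonicity of the projection norms follows by the Pythagorean identity. You simply spell out the last step in more detail than the paper does.
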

\begin{proof}
For $q \in \H$, $P_\T q = P_{\Z_\T} q$.  Noting that $\Z_\T \subset \Z_{\T"}$ completes the proof.
\end{proof}

We next show that there is overlap between $\H$ and $\H_\ell$ on any conforming mesh $\T_\ell$; cf. \cite[Theorem 2.7]{MHS13}.

\begin{proposition}\label{lem2} 
The projection $P_\T$ is injective. Namely, given $q \in \H$ and $q\neq 0$, $P_\T q \neq 0$.  
\end{proposition}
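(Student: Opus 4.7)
The plan is to reduce the injectivity of $P_\T$ on $\H$ to the injectivity of the $L_2$-orthogonal projection $P_\H$ restricted to $\H_\T$, and then to establish this latter injectivity using the commuting projection $\pi_{CW}$.

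Suppose $q \in \H$ with $P_\T q = 0$. By Lemma \ref{eq3-4}, $P_\T q = P_{\Z_\T} q$, so this hypothesis is equivalent to $q \perp_{L_2} \Z_\T$, and in particular $q \perp \H_\T$. Now every $h_\T \in \H_\T$ is closed ($h_\T \in \Z_\T \subset \Z = \H \oplus \B$), so the continuous Hodge decomposition produces a $\xi \in H\Lambda^{k-1}$ with $h_\T = d\xi + P_\H h_\T$. Pairing against $q$ and using $q \in \H \perp \B$ collapses $\langle q, h_\T \rangle$ to $\langle q, P_\H h_\T \rangle$, so that $q \perp \H_\T$ in $L_2$ is equivalent to $q \perp P_\H(\H_\T)$ within $\H$.

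The key step is then to show that $P_\H \colon \H_\T \to \H$ is injective; together with the equality $\dim \H_\T = \dim \H = \beta$ this forces $P_\H(\H_\T) = \H$. To see injectivity, suppose $h_\T \in \H_\T$ satisfies $P_\H h_\T = 0$; the Hodge decomposition above then gives $h_\T = d\xi \in \B$. Applying $\pi_{CW}$ to both sides, using $\pi_{CW} h_\T = h_\T$ (since $h_\T \in V_\T$ and $\pi_{CW}$ is a projection onto $V_\T$) together with the commutation property $\pi_{CW} d = d\pi_{CW}$ from \eqref{CW_interp}, one obtains $h_\T = d(\pi_{CW} \xi) \in \B_\T$. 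Because $\B_\T \perp \H_\T$ in $L_2$, this forces $h_\T = 0$. Feeding $P_\H(\H_\T) = \H$ back into the previous paragraph gives $q \perp \H$, and since $q \in \H$ we conclude $q = 0$.

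The main obstacle is the injectivity of $P_\H|_{\H_\T}$: it hinges on the commuting projection transporting exactness in the continuous complex ($h_\T \in \B$) to exactness in the discrete complex ($h_\T \in \B_\T$), which is exactly the statement that $\pi_{CW}$ induces an isomorphism on cohomology. This topological input — rather than any quantitative approximation estimate — is precisely what makes the proposition hold for any initial conforming mesh, not only sufficiently fine ones.
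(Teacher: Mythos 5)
Your proof is correct and uses the same core idea as the paper: both reduce injectivity of $P_\T$ on $\H$ to showing that no nonzero $h_\T \in \H_\T$ lies in $\B$, and both establish the latter by applying the commuting projection to the potential $\xi$ with $h_\T = d\xi$ and then invoking $\B_\T \perp \H_\T$. The only difference is cosmetic: the paper invokes the symmetry $\delta(\H,\H_\T)=\delta(\H_\T,\H)$ from \eqref{gapequiv} to pass from a $q\in\H$ with $P_\T q=0$ to a $q_\T\in\H_\T$ with $P_\H q_\T=0$, whereas you unwind that same passage by hand via the Hodge decomposition of $\H_\T \subset \Z$ together with a dimension count forcing $P_\H(\H_\T)=\H$.
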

\begin{proof}
Let $\pi$ be a commuting projection operator (either $\pi_{FW}$ or $\pi_{CW}$ will suffice).  Suppose there exists a $0 \neq q \in \H$ with $P_\ell q = 0$.  By \eqref{gapequiv},  $\delta(\H, \H_\ell) = 1 =\delta(\H_\ell, \H) $, so there is a non-zero form $q_\ell \in \H_\ell$ with $P_{\H}q_\ell = 0$. Note that $\H_\ell \subset \Z_\ell \subset \Z = \B \oplus \H$, so $P_{\H}q_\ell = 0$ implies $q_\ell \in \B$. That is, $q_\ell = d v$ for some $v\in V^{k-1}$. But $q_\ell = \pi^k q_\ell = \pi^k d v = d \pi^{k-1}v \in \B_\ell$.  But $\B_\ell \bot \H_\ell$ implies that $q_\ell = 0$, which is a contradiction.
\end{proof}

%\begin{proposition}  \label{lem2} Given nonzero $q \in \H^k$, there is $q_\ell \in \H_\ell^k$ such that $\langle q, q_\ell \rangle \neq 0$.  \end{proposition}
%\begin{proof}
%We simply chose $q_\ell = P_\ell q$. Then $(q, q_\ell) = \|q_\ell\|^2 \neq 0$. 
%\end{proof}
%\begin{proof}  
%Assume first that $q_\ell \perp \H^k$ for some $q_\ell \in \H_\ell^k$.  Because $q_\ell \in \H_\ell^k \subset \Z^k=\B^k \oplus \H^k$, we thus have $q_\ell \in \B^k$.  That is, there is $v \in V^{k-1}$ such that $dv = q_\ell$.  But we have assumed the existence of a commuting projecting interplant $\pi$, so that $q_\ell = \pi q_\ell = \pi dv = d \pi v$.  Thus $q_\ell$ is in the range of $d:V_\ell^{k-1} \rightarrow V_\ell^k$, i.e., $q_\ell \in \B_\ell^k$.  This contradicts the assumption that $q_\ell \in \H_\ell^k$, since $\H_\ell^k \perp \B_\ell^k$.  This yields $\sup_{q_\ell \in \H_\ell^k} \frac{\|q_\ell -P_{\H^k} q_\ell \|}{\|q_\ell\|} <1$.  Because in our situation we also have $\beta = \beta<\infty$, Lemma 2 of [Demlow-Hirani] yields that also $\sup_{q \in \H^k} \frac{\|q -P_{\H_\ell^k} q \|}{\|q\|} <1$.  The latter statement is equivalent to the above assertion, thus completing the proof.
%\end{proof}

Combining the above two propositions yields the following lemma.
\begin{lemma}
\label{lem3}
$P_\ell:\H \rightarrow \H_\ell$ is an isomorphism.  In addition, $\|P_\ell^{-1}\| \le \|P_0^{-1}\| <\infty$.
\end{lemma}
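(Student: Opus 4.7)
The plan is to decompose the claim into two parts: first establish that $P_\ell$ is an isomorphism from $\H$ onto $\H_\ell$, and then derive the norm bound from the monotonicity statement in Proposition~\ref{prop3}. All of the real content has already been extracted in the preceding results, so the proof should be a short bookkeeping exercise.

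For the isomorphism, I would argue that $\dim \H = \dim \H_\T = \beta$ is finite and equal for the two spaces (both realize the $k$-th Betti number, by construction of the discrete subcomplex). By Proposition~\ref{lem2}, $P_\ell$ restricted to $\H$ has trivial kernel, so it is injective. A linear injection between two finite-dimensional spaces of the same dimension is automatically bijective, and in finite dimensions it is a bounded linear isomorphism with bounded inverse. In particular, $\|P_0^{-1}\|<\infty$ follows immediately by applying this argument on the initial mesh.

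For the norm estimate, I would use the standard characterization
\begin{align*}
\|P_\ell^{-1}\|^{-1} = \inf_{q \in \H,\, \|q\|=1} \|P_\ell q\|,
\end{align*}
which holds because writing $q = P_\ell^{-1} q_\ell$ turns the defining inequality $\|P_\ell^{-1} q_\ell\| \le \|P_\ell^{-1}\|\,\|q_\ell\|$ into $\|q\| \le \|P_\ell^{-1}\|\,\|P_\ell q\|$. Since $\T_0 \subset \T_\ell$, Proposition~\ref{prop3} gives $\|P_0 q\| \le \|P_\ell q\|$ for every $q \in \H$, and taking the infimum over the unit sphere in $\H$ yields
\begin{align*}
\|P_0^{-1}\|^{-1} \;=\; \inf_{q \in \H,\, \|q\|=1} \|P_0 q\| \;\le\; \inf_{q \in \H,\, \|q\|=1} \|P_\ell q\| \;=\; \|P_\ell^{-1}\|^{-1},
\end{align*}
which is the desired inequality $\|P_\ell^{-1}\| \le \|P_0^{-1}\|$.

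There is no genuine obstacle here; the only thing to be careful about is the distinction between $P_\ell$ as a projection on $L_2\Lambda^k$ and its restriction to $\H$, which is what is being inverted. Once that distinction is made explicit, the finite-dimensional bijection argument and the monotonicity inequality combine directly to give the lemma, with the constant $\|P_0^{-1}\|$ depending only on the initial triangulation $\T_0$.
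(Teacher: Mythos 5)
Your proof is correct and follows essentially the same route as the paper: injectivity (Proposition~\ref{lem2}) plus equality of finite dimensions gives the isomorphism, and the monotonicity $\|P_0 q\| \le \|P_\ell q\|$ from Proposition~\ref{prop3} passed through the characterization $\|P_\ell^{-1}\|^{-1}=\inf_{\|q\|=1}\|P_\ell q\|$ gives the norm bound. The only cosmetic difference is that the paper explicitly invokes compactness of the unit sphere in $\H$ to certify $\inf_{\|q\|=1}\|P_0 q\|>0$, whereas you obtain the same positivity as a byproduct of the finite-dimensional bounded-inverse fact; both are valid.
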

\begin{proof}
Since $\beta = \beta_\ell<\infty$ and $P_\ell$ is injective, we conclude $P_\ell$ is an isomorphism. Consider the constant $c_0 = \inf_{q\in \H, \|q\| = 1}\|P_0 q\|$, which is positive because it is the infimum over a compact set of a continuous and positive function. Then $\|P_0^{-1}\|\leq c_0^{-1}$. By Proposition  \ref{prop3},  $\|P_0 q\|\leq \|P_1 q\| \leq \|P_2 q\| \leq ...$, and so we have $\|P_{\ell+1}^{-1}\| \le \|P_\ell^{-1}\| \leq ... \leq \|P_0^{-1} \| \leq c_0^{-1}$.
\end{proof}

\section{A posteriori error estimates}

\subsection{Previous estimates}
Our first goal is to control the error quantity $\E_\ell$ defined in \eqref{eq2-9}, which is a measure of the distance between $\H^k$ and $\H_\ell^k$.  

We follow \cite{DH14}, where a posteriori error estimates were given for measuring the gap between $\H^k$ and $\H_\ell^k$.  Let $\T_\ell$ be the mesh on level $\ell$, and let $h_T=|T|^{1/n}$ for $T \in \T_\ell$. Given $q_\ell \in \H_\ell^k$, let 
\begin{align}
\eta_\ell(T; q_\ell)=h_T\|\delta_{T} q_\ell\|_T + h_T^{1/2} \|\llbracket \trace \star q_\ell \rrbracket \|_{\partial T}
\end{align}
and
\begin{align} \eta_\ell(T)^2=\sum_{l=1}^{\beta} \eta_\ell(T; q_\ell^l)^2, ~~~\eta_\ell(\T) = \left ( \sum_{T \in \T} \eta_\ell(T)^2 \right ) ^{1/2}, ~\T \subset \T_\ell.
\end{align}
In the concrete case of the classical space given in \eqref{eq0-1}, we have $\eta_\ell(T; q_\ell) = h_T \|\Div q_\ell\|_T + h_T^{1/2} \|\llbracket q_\ell \cdot {\bf n} \rrbracket \|_{\partial T}.$  A slight modification of Lemma 9 and Lemma 13 of \cite{DH14} yields 
\begin{align}
\label{eq:practapost}
\eta(\T_\ell) \le C_1 \E_\ell \lesssim \eta(\T_\ell).
\end{align}
with $C_1$ and the constant hidden in $\lesssim$ depending on the shape regularity properties of $\T_0$, but independent of essential quantities including especially the dimension $\beta$ of $\H^k$ and $\H_\ell^k$.  From \cite{DH14} we also have
\begin{align}
{\rm gap}(\H^k, \H_\ell^k) \lesssim \eta(\T_\ell) \lesssim \sqrt{\beta} {\rm gap}(\H^k, \H_\ell^k).
\end{align}
Employing the error notion $\E_\ell$ thus allows us to obtain estimates with entirely nonessential constants.  

\subsection{Non-computable error estimators}

Convergence of adaptive FEM for multiple and clustered eigenvalues has been studied in \cite{DHZ15, Gal15, BD15}.  Our problem is similar in that our AFEM approximates a subspace rather than a single function.  The estimators defined above with respect to $\{q_\ell^j\}$ are problematic when viewed from the standpoint of standard proofs of AFEM contraction, which require continuity between estimators at adjacent mesh levels.  Because the bases $\{q_\ell^j\}$ and $\{q_{\ell+1}^j\}$ are not generally aligned, such continuity results are not meaningful.  
%That is, employing standard proofs using the indicators $\eta_\ell$ would in essence require that $q_\ell^j \approx q_{\ell+1}^j$, but this is not to be expected.  

Following \cite{DHZ15}, we employ a non-computable intermediate estimator which solves this alignment problem and is equivalent to $\eta_\ell(T)$.  Let $\{q^j\}_{j=1}^{\beta}$ be a fixed orthonormal basis for $\H^k$.   We define 
\begin{align}
\mu_\ell(T)^2 = \sum_{j=1}^{\beta} \eta_\ell(T; P_\ell q^j)^2, ~~~\mu_\ell(\T) = \left ( \sum_{T \in \T} \mu_\ell(T)^2 \right ) ^{1/2}, ~\T \subset \T_\ell.
\end{align}

We next establish that approximation of $\H^k$ is sufficiently good on the initial mesh to guarantee that the estimators $\eta_\ell(T)$ and $\mu_\ell(T)$ are equivalent.  

\begin{theorem}\label{th:equivalence}
$$\mu_\ell(T) \le \|P_\ell\| \eta_\ell(T) \le  \|P_\ell\| \|P_\ell^{-1}\| \mu_\ell(T) \le \|P_{\underline{\ell}}^{-1}\| \mu_\ell(T), ~~ T \in \T_\ell, ~~0 \le \underline{\ell} \le \ell.$$ 
\end{theorem}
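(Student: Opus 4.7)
The strategy is to recognize both $\mu_\ell(T)^2$ and $\eta_\ell(T)^2$ as Hilbert--Schmidt norms of closely related operators, differing only by composition with the isomorphism $P_\ell\colon\H\to\H_\ell$ from Lemma \ref{lem3}, so that the first two inequalities fall out as immediate consequences of submultiplicativity of the Hilbert--Schmidt norm.

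First I set $A_{lj}=\langle q^j,q_\ell^l\rangle$, so that $P_\ell q^j=\sum_l A_{lj}q_\ell^l$ and $A$ is the matrix of $P_\ell$ with respect to the $L_2$-orthonormal bases $\{q^j\}$ of $\H$ and $\{q_\ell^l\}$ of $\H_\ell$; hence $\|A\|_{\mathrm{op}}=\|P_\ell\|$ and $\|A^{-1}\|_{\mathrm{op}}=\|P_\ell^{-1}\|$, both finite by Lemma \ref{lem3}. Next, I encode the local estimator as the norm of a single linear map: let $L\colon\H_\ell\to X:=L_2\Lambda(T)\oplus L_2\Lambda(\partial T)$, $Lv=(h_T\,\delta_T v,\,h_T^{1/2}\llbracket\trace\!\star v\rrbracket)$, with $X$ given a Hilbert norm equivalent to the $\ell^1$-combination appearing in $\eta_\ell(T;\cdot)$, so that (up to the harmless absorbed constant discussed below) $\eta_\ell(T;v)=\|Lv\|_X$. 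Orthonormality of $\{q^j\}$ in $\H$ and $\{q_\ell^l\}$ in $\H_\ell$ then gives
\[
\mu_\ell(T)^2=\sum_{j}\|LP_\ell q^j\|_X^2=\|L\circ P_\ell\|_{\mathrm{HS}(\H\to X)}^2,\qquad \eta_\ell(T)^2=\sum_{l}\|Lq_\ell^l\|_X^2=\|L\|_{\mathrm{HS}(\H_\ell\to X)}^2.
\]
Submultiplicativity $\|L\circ P_\ell\|_{\mathrm{HS}}\le\|L\|_{\mathrm{HS}}\|P_\ell\|_{\mathrm{op}}$ yields $\mu_\ell(T)\le\|P_\ell\|\,\eta_\ell(T)$; writing $L=(L\circ P_\ell)\circ P_\ell^{-1}$ and reapplying gives $\eta_\ell(T)\le\|P_\ell^{-1}\|\,\mu_\ell(T)$, which rearranges to the second inequality. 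For the last inequality I use that $P_\ell$ is an $L_2$-orthogonal projection, so $\|P_\ell\|\le 1$, combined with the monotonicity $\|P_\ell^{-1}\|\le\|P_{\underline\ell}^{-1}\|$ from Lemma \ref{lem3}, giving $\|P_\ell\|\|P_\ell^{-1}\|\le\|P_{\underline\ell}^{-1}\|$.

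The main technical point to address is that $\eta_\ell(T;v)=h_T\|\delta_T v\|_T+h_T^{1/2}\|\llbracket\trace\!\star v\rrbracket\|_{\partial T}$ is an $\ell^1$-combination of two $L_2$-norms rather than an intrinsic Hilbert seminorm, so it does not literally equal $\|Lv\|_X$ for any Hilbert norm on $X$. I would address this either by passing to the equivalent inner-product seminorm $(h_T^2\|\delta_T v\|_T^2+h_T\|\llbracket\trace\!\star v\rrbracket\|_{\partial T}^2)^{1/2}$ and absorbing the resulting $\sqrt{2}$-factor into the constants under $\le$, or by applying the Hilbert--Schmidt estimate separately to $L_1v=h_T\delta_T v$ and $L_2v=h_T^{1/2}\llbracket\trace\!\star v\rrbracket$ into their respective Hilbert targets and recombining with Cauchy--Schwarz in the index $j$. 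In either case the whole estimate reduces cleanly to the single operator bound $\|A\|_{\mathrm{op}}=\|P_\ell\|$ from which the stated factors $\|P_\ell\|$, $\|P_\ell^{-1}\|$ emerge.
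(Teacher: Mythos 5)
Your proof is essentially the paper's argument recast in Hilbert--Schmidt language: the paper likewise introduces the matrix $M=[\langle q^i,q_\ell^j\rangle]$ of $P_\ell$ in the two orthonormal bases, proves $\|M\|\le\|P_\ell\|\le 1$ and $\|M^{-1}\|\le\|P_\ell^{-1}\|$ exactly as you do for $A$, and then applies the operator-norm bound to $\sum_j\|\delta_T P_\ell q^j\|_T^2\le\|M\|^2\sum_j\|\delta_T q_\ell^j\|_T^2$ together with the analogous bound for the jump term; your submultiplicativity of $\|\cdot\|_{\mathrm{HS}}$ is precisely that computation.

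The one point on which you go beyond the paper is your explicit remark about the $\ell^1$ structure of the local indicator $\eta_\ell(T;v)=h_T\|\delta_T v\|_T+h_T^{1/2}\|\llbracket\trace\star v\rrbracket\|_{\partial T}$, which you correctly note is not a Hilbert seminorm. The paper's proof has the same issue: it controls the volume term and the boundary term separately and asserts ``similar inequalities hold for the boundary term,'' but adding those two bounds does not produce the stated inequality for the $\ell^1$-squared indicator without an extra factor of $\sqrt{2}$, and in particular your second suggested repair (bound $L_1$ and $L_2$ separately and recombine via Cauchy--Schwarz in $j$) does \emph{not} close the gap, because Cauchy--Schwarz runs the wrong way on the cross term. (A quick example with $\beta=2$, $M=\tfrac{1}{\sqrt2}\bigl(\begin{smallmatrix}1&1\\1&-1\end{smallmatrix}\bigr)$, mutually orthogonal $\delta_T q_\ell^j$ and orthogonal jump traces shows the $\ell^1$ inequality can fail by up to $\sqrt2$.) Your first repair --- read $\eta_\ell(T;\cdot)$ as the equivalent quadratic combination $(h_T^2\|\delta_T v\|_T^2+h_T\|\llbracket\trace\star v\rrbracket\|_{\partial T}^2)^{1/2}$, under which the Hilbert--Schmidt argument gives the stated constants exactly --- is the correct one, and is almost certainly what the paper intends.
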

\begin{proof}
Recall that $\mu_\ell$ is defined using $\{P_\ell q^j \}$ with $\{q^j\}_{j=1}^{\beta}$ a fixed orthonormal basis for $\H^k$ and $\eta_\ell$ is defined using the orthonormal basis $\{q_\ell^j\}_{j=1}^{\beta_\ell}$.  Also, $\beta = \beta_\ell <\infty$. Let $\boldsymbol q = (q^1, q^2, \cdots, q^{\beta})^T$ and $\boldsymbol q_\ell = (q_\ell^1, q_\ell^2, \cdots, q_\ell^{\beta})^T$.  The matrix $[\langle q^i, q_\ell^j \rangle]=:M:\mathbb{R}^\beta \rightarrow \mathbb{R}^\beta$ satisfies $P_\ell\boldsymbol q = M \boldsymbol q_\ell$.  

Following the proof of \cite[Lemma 3.1]{BD15}, let $B:=MM^T=[\langle P_\ell q^i, P_\ell q^j \rangle]$.  $MM^T$ and $M^TM$ are isospectral and thus have the same $2$-norm, and $\|M^T\|^2=\|M\|^2=\|M^TM\|=\|B\|$.  We thus compute $\|M^T\|$.  Given $v \in \mathbb{R}^\beta$ with $|v|=1$, let $\tilde{v} = \sum_{j=1}^\beta v_i q^i$ so that $\tilde{v} \in \H$ with $\|\tilde{v}\|=1$.   Then $|M^T v|=|[\sum_{j=1}^\beta \langle q_\ell^i, v_j q^j \rangle] | =|[\langle q_\ell^i, \tilde{v} \rangle ]|= \|P_\ell \tilde{v}\| \le \|P_\ell\|$.  Thus $\|M\|=\|M^T\|\le \|P_\ell\|\le 1$.  Similarly, $\|M^{-1}\|^{-1} = \|M^{-T}\|  \ge \inf_{\tilde{v} \in \H, \|\tilde{v}\|=1} \|P_\ell \tilde{v}\| =\|P_\ell^{-1}\|^{-1}$.  Thus $\|M^{-1}\| \le \|P_\ell^{-1}\|$.

%Through $P_\ell^{-1}$, we then have an isomorphism between two orthonormal basis $\Phi = P_\ell^{-1}\circ B: \H_\ell^k \to \H^k$. Then $\|\Phi\| = \|\Phi^{-1}\| = 1$ and consequently $\|B\|\leq 1$ and $\|B^{-1}\|\leq \|P_\ell^{-1}\|$.

As $\delta$ is linear and commutes with $M$, we have
$$
\sum_{j=1}^{\beta} \|\delta_T P_\ell q^j\|_T^2 = \sum_{j=1}^{\beta} \|(M \delta_T \boldsymbol q_\ell)_j\|_T^2\leq \|M\|^2 \sum_{j=1}^{\beta} \|\delta_T q_\ell^j\|_T^2\leq \|P_\ell \|^2 \sum_{j=1}^{\beta} \|\delta_T q_\ell^j\|_T^2.
$$
Similarly 
$$
\sum_{l=1}^{\beta} \|\delta_T q_\ell^l\|_T^2 \leq \|M^{-1}\|^2 \sum_{l=1}^{\beta} \|\delta_T P_\ell q^l\|_T^2 \leq \|P_\ell^{-1}\|^2 \sum_{l=1}^{\beta} \|\delta_T P_\ell q^l\|_T^2.
$$
Similar inequalities hold for the boundary term. 
%\Red{I think this is entirely elementary, but we might want to write the proof out briefly.  The estimator equivalence follows by writing the $q_\ell^l$'s with respect to the basis given by the $P_\ell q^l$'s.  }
%\LC{Will write tomorrow.}
\end{proof}

Theorem \ref{th:equivalence} and \eqref{eq:practapost} immediately imply an a posteriori bound using $\mu_\ell$.

\begin{corollary}
\begin{align} 
\label{eq:apost}
\E_\ell^2 \le C_1 \|P_\ell^{-1}\|^2 \mu_\ell(\T_\ell)^2 \le C_1 \|P_{\underline{\ell}}^{-1}\|^2 \mu_\ell(\T_\ell)^2, ~~0 \le \underline{\ell} \le \ell,
\end{align}
where $C_1$ is independent of essential quantities.
\end{corollary}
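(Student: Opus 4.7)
The plan is to assemble the corollary from three ingredients that have already been proved above, with no new work required beyond the combination. The desired chain reads
\[
\E_\ell^2 \;\le\; C_1 \eta_\ell(\T_\ell)^2 \;\le\; C_1 \|P_\ell^{-1}\|^2 \mu_\ell(\T_\ell)^2 \;\le\; C_1 \|P_{\underline{\ell}}^{-1}\|^2 \mu_\ell(\T_\ell)^2,
\]
so I would walk through the three inequalities in order.

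First, I would invoke the reliability estimate in \eqref{eq:practapost}, which gives $\E_\ell^2 \le C_1 \eta_\ell(\T_\ell)^2$ with the constant $C_1$ depending only on shape regularity of $\T_0$ and in particular being independent of $\beta$. This is the a posteriori bound using the computable (but alignment-troubled) estimator built from the basis $\{q_\ell^j\}$.

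Second, I would convert the computable estimator into the non-computable one via Theorem \ref{th:equivalence}. That theorem yields the sandwich $\mu_\ell(T) \le \|P_\ell\|\,\eta_\ell(T) \le \|P_\ell\|\,\|P_\ell^{-1}\|\,\mu_\ell(T)$; the portion I need is $\|P_\ell\|\,\eta_\ell(T) \le \|P_\ell\|\,\|P_\ell^{-1}\|\,\mu_\ell(T)$, which (since $\|P_\ell\|>0$ by Proposition \ref{lem2}) gives the elementwise bound $\eta_\ell(T) \le \|P_\ell^{-1}\|\,\mu_\ell(T)$. Squaring and summing over $T \in \T_\ell$ produces $\eta_\ell(\T_\ell)^2 \le \|P_\ell^{-1}\|^2\,\mu_\ell(\T_\ell)^2$, which I then combine with the reliability bound from the first step.

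Third, for the final inequality I would use the monotonicity established in the proof of Lemma \ref{lem3}: since $\T_{\underline{\ell}} \subset \T_\ell$, Proposition \ref{prop3} gives $\|P_{\underline{\ell}} q\| \le \|P_\ell q\|$ for every $q \in \H$, hence $\|P_\ell^{-1}\| \le \|P_{\underline{\ell}}^{-1}\|$ for $\underline{\ell} \le \ell$. Substituting yields the last inequality in the corollary. I do not anticipate a significant obstacle: the work is entirely in the supporting results (especially Theorem \ref{th:equivalence} and the injectivity result Proposition \ref{lem2}), and the only care needed here is to note that $C_1$ is preserved from \eqref{eq:practapost} and is independent of essential quantities, so the only problem-dependent constant appearing in the final bound is $\|P_{\underline{\ell}}^{-1}\|$.
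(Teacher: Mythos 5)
Your argument is correct and is essentially the paper's intended proof: the paper simply remarks that the corollary follows immediately from combining Theorem \ref{th:equivalence} with \eqref{eq:practapost}, and your three-step chain (reliability, $\eta_\ell(T)\le\|P_\ell^{-1}\|\mu_\ell(T)$ from the equivalence, and the monotonicity $\|P_\ell^{-1}\|\le\|P_{\underline{\ell}}^{-1}\|$ from Lemma \ref{lem3}) spells that out accurately. One small point worth being aware of: in \eqref{eq:practapost} the symbol $C_1$ is attached to the efficiency bound $\eta(\T_\ell)\le C_1\E_\ell$, while the reliability constant is hidden in $\lesssim$; the corollary (and your proof) reuse $C_1$ for the reliability constant, which is a harmless but slightly inconsistent relabeling in the paper that you have followed correctly.
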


\subsection{Localized a posteriori estimates} 
\label{sec:locapost}

As is common in AFEM optimality proofs, we require a localized upper bound for the difference between discrete solutions on nested meshes.  More precisely, let $\R_{\T_\ell \rightarrow \tilde{\T}}$ be the set of elements refined in passing from $\T_\ell$ to $\tilde{\T}$.  A standard estimate for elliptic problems with finite element solutions $u_\ell$ and $\tilde{u}$ on $\T_\ell, \tilde{\T}$ is $|||u_\ell -\tilde{u}||| \lesssim (\sum_{T \in \R_{\T_\ell \rightarrow \tilde{T}}} \xi(T)^2)^{1/2}$, where $\xi(T)$ is a standard elliptic residual indicator and $|||\cdot |||$ is the energy norm.  The estimate we prove is not as sharp but suffices for our purposes.  %Following \cite{ZCSWX11}, we use two interpolants in our analysis, one of which is a local projection and the other of which is local (but not a projection) and satisfies a local regular decomposition property.  

\begin{lemma}
\label{lem:locapost}
Let $\T$ be a refinement of $\T_\ell$ so that $V_\ell^k \subset V_\T^k \subset H \Lambda^k$.  Then there exists a set $\hat{\R}_{\T_\ell \rightarrow \T}$ with $\R_{\T_\ell \rightarrow \T} \subset \hat{R}_{\T_\ell \rightarrow \T}$ and
\begin{align}
\label{eq6-100}
\# \hat{\R}_{\T_\ell \rightarrow \T} \lesssim \# \R_{\T_\ell \rightarrow \T}
\end{align}
such that 
\begin{align}
\label{eq6-101}
\sum_{j=1}^{\beta} \|(P_\ell -P_\T) q^j\|^2 \le C_2^2 \sum_{T \in \hat{\R}_{\T_\ell \rightarrow \T}} \eta_\ell(T)^2,
\end{align}
where $C_2$ is independent of essential quantities.
%, and $\kappa_\ell$ satisfies $\kappa_\ell \le \bar{\kappa}$ uniformly for $\ell \ge 0$ and $\kappa_\ell \rightarrow 0$ as $\ell \rightarrow \infty$.  
\end{lemma}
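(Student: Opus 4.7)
The plan is to bound each $\|w^j\|^2$, where $w^j := P_\ell q^j - P_\T q^j$, by exploiting two structural properties: $w^j$ is closed ($dw^j = 0$) and $L_2$-orthogonal to $\Z_\ell$. The second is a consequence of the identity $P_\ell q = P_{\Z_\ell}q = P_{\Z_\ell}(P_{\Z_\T}q) = P_{\Z_\ell}P_\T q$ for $q \in \H$, which follows from \eqref{eq3-4} and the nestedness $\Z_\ell \subset \Z_\T$; thus $w^j = -(I-P_{\Z_\ell})P_\T q^j$ lies in $\Z_\ell^\perp$, while $dw^j=0$ places it in $\Z_\T$.

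I would define $\hat\R_{\T_\ell \to \T}$ as $\R_{\T_\ell \to \T}$ together with those elements $T \in \T_\ell$ whose Falk--Winther patch $\omega_T$ meets $\R_{\T_\ell \to \T}$; the finite overlap \eqref{FW_count} delivers \eqref{eq6-100}. The decisive locality feature is that for $T \in \T_\ell \setminus \hat\R$, $\omega_T$ lies entirely in $\T_\ell \cap \T$, so $V_\T|_{\omega_T}=V_\ell|_{\omega_T}$. In particular the Falk--Winther interpolants at the two mesh levels coincide on $T$ when applied to any $v \in H\Lambda$, and by \eqref{FW_interp3}, $\pi_{FW}^\ell u|_T=u|_T$ for every $u\in V_\T$.

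Apply the regular decomposition \eqref{reg_decomp} to $w^j$ to write $w^j = d\tilde\varphi^j + \tilde z^j$ with $\tilde\varphi^j \in H^1\Lambda^{k-1}$ and $\tilde z^j \in H^1\Lambda^k$ closed (since $dw^j=0$), and $\|\tilde\varphi^j\|_{H^1}+\|\tilde z^j\|_{H^1}\lesssim \|w^j\|$. Setting $\pi:=\pi_{FW}^\ell$, orthogonality of $w^j$ to $\Z_\ell \supset \B_\ell \ni d\pi\tilde\varphi^j$, and to $\pi\tilde z^j \in \Z_\ell$ (since $\pi$ commutes with $d$ and $d\tilde z^j=0$), reduces the computation to
\begin{equation*}
\|w^j\|^2 = \langle w^j,\, d(\tilde\varphi^j - \pi\tilde\varphi^j)\rangle + \langle w^j,\, \tilde z^j - \pi\tilde z^j\rangle.
\end{equation*}
Substituting $w^j = P_\ell q^j - P_\T q^j$ and observing that $\pi\tilde\varphi^j \in V_\ell^{k-1}\subset V_\T^{k-1}$ is annihilated under $d$ by both $P_\ell q^j$ and $P_\T q^j$, I would also insert $\pi_{FW}^\T\tilde\varphi^j$ (with the analogous annihilation property for $P_\T q^j$), and integrate by parts elementwise on $\T_\ell$ for the $P_\ell q^j$ contribution. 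This produces interior residuals $\delta P_\ell q^j$ and jumps $[\trace\star P_\ell q^j]$ tested against $\pi_{FW}^\T\tilde\varphi^j - \pi_{FW}^\ell\tilde\varphi^j$, which vanishes on $T \notin \hat\R$ by the coincidence of the two FW interpolants on shared patches. Scaled interpolation estimates from \eqref{FW_interp4} bound the test norms by $h_T$ and $h_T^{1/2}$ times $|\tilde\varphi^j|_{H^1(\omega_T)}$, yielding after Cauchy--Schwarz
\begin{equation*}
|\langle w^j, d(\tilde\varphi^j - \pi\tilde\varphi^j)\rangle| \lesssim \Bigl(\sum_{T\in\hat\R}\mu_\ell(T;q^j)^2\Bigr)^{1/2}\|w^j\|.
\end{equation*}
The closed piece $\langle w^j, \tilde z^j - \pi\tilde z^j\rangle$ is treated analogously (using $L_2$ stability of $\pi$ combined with $d\tilde z^j=0$). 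Dividing by $\|w^j\|$, squaring, summing over $j$, and invoking Theorem~\ref{th:equivalence} to pass from $\mu_\ell$ to $\eta_\ell$ produces \eqref{eq6-101} with $C_2$ depending only on shape regularity (and, through the equivalence, on $\|P_0^{-1}\|$).

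The main obstacle is guaranteeing that the $P_\T q^j$ residual does not leak outside $\hat\R$: a naive integration by parts in the $P_\T q^j$ term produces a global fine-mesh residual $\mu_\T(\T;q^j)$ not controlled by $\mu_\ell$ on $\hat\R$ alone. The resolution is the substitution of $\pi_{FW}^\T\tilde\varphi^j$ for $\pi_{FW}^\ell\tilde\varphi^j$ whenever the $P_\T q^j$ term is active, combined with the verification that on every $T \notin \hat\R$ the two interpolants agree; this in turn rests on the local character of the Falk--Winther construction, since both interpolants reduce to the same local solves on the shared patch.
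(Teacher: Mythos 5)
Your preliminary observations are correct (the orthogonality $w^j \perp \Z_\ell$, the closedness of $w^j$, the definition of $\hat\R$ via overlapping Falk--Winther patches), but the core of your argument has a genuine gap that stems from working directly with $w^j = P_\ell q^j - P_\T q^j$. The paper first performs a nontrivial reduction: writing $P_\ell {\bf q} = P_\ell P_\T {\bf q} = M P_\ell {\bf q}_\T$ (with $M$ the change-of-basis matrix, $\|M\|\le 1$) and invoking \eqref{eq2-9-a}, it obtains $\sum_j \|(P_\ell-P_\T)q^j\|^2 \le \sum_j \|q_\ell^j - P_\T q_\ell^j\|^2$. The point of this step is structural: $q_\ell^j - P_\T q_\ell^j$ lies in $\B_\T$ (because $q_\ell^j \in \Z_\ell\subset\Z_\T$ and $P_\T$ removes the $\H_\T$ component), so a dual-norm representation with $dv_\T$, $v_\T\in V_\T^{k-1}$, is available, and only one residual source---the \emph{coarse}-mesh function $q_\ell^j$---ever appears. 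Your $w^j$ does not lie in $\B_\T$ (you only get $w^j\in\Z_\T\cap\Z_\ell^\perp$, since $P_\ell q^j$ is not orthogonal to $\B_\T$), so you do not have access to this representation, and you are forced to carry both $P_\ell q^j$ and $P_\T q^j$ through the estimate.

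That forced carrying is where the localization breaks. After inserting both $\pi_{FW}^\ell\tilde\varphi^j$ and $\pi_{FW}^\T\tilde\varphi^j$ as you propose, the natural splitting is $\langle P_\ell q^j, d(\tilde\varphi^j-\pi_\ell\tilde\varphi^j)\rangle - \langle P_\T q^j, d(\tilde\varphi^j-\pi_\T\tilde\varphi^j)\rangle$; on an unrefined $T\notin\hat\R$, where $\pi_\ell\tilde\varphi^j|_T=\pi_\T\tilde\varphi^j|_T$, the local contributions combine to $\langle\delta w^j,\tilde\varphi^j-\pi_\ell\tilde\varphi^j\rangle_T + (\text{jumps})$, which is \emph{not} zero because $\delta P_\ell q^j \neq \delta P_\T q^j$ on unrefined elements; the test function does not reduce to $\pi_\T\tilde\varphi^j-\pi_\ell\tilde\varphi^j$ as your sketch asserts. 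There is also a second issue you pass over too quickly: applying the regular decomposition \eqref{reg_decomp} to the $k$-form $w^j$ produces a term $\langle w^j,\tilde z^j-\pi\tilde z^j\rangle$ with no exterior derivative on the test side, hence no integration by parts producing $\delta$-residuals and $\trace\star$-jumps of the form that make up $\eta_\ell$; local $H\Lambda$-stability of $\pi_{FW}$ on closed forms gives an interpolation estimate but only against $\|w^j\|_T$, not against $\eta_\ell(T)$, so this piece is not actually controllable in the required form. The paper avoids this entirely by decomposing the $(k-1)$-form $v_\T$ and observing $d v_\T = dz$, so that the $\varphi$-piece never enters. Finally, even in the paper's cleaner setting, the trace inequality applied to $(I-\pi_\ell)\pi_\T z$ on $T\in\T_\ell$ is delicate---this function is only piecewise $H^1$ on $\T$, not on $\T_\ell$---and requires the three-term Scott--Zhang splitting $I+II+III$ in \eqref{eq6-119}--\eqref{eq6-222}; your sketch does not address this.
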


\begin{proof}  
Following the notation used in the proof of Theorem \ref{th:equivalence}, denote by ${\bf q}$, ${\bf q}_\ell$, and ${\bf q}_\T$ the column vectors of orthonormal basis functions for $\H$, $\H_\ell$, and $\H_\T$, respectively.  Let $M=(q^i, q_\T^j)$ be the matrix satisfying $P_\T {\bf q} = M {\bf q}_\T$; recall that $\|M\| \le \|P_\T\| \le 1$, with $\|M\|$ the matrix (operator) 2-norm.  Note also that $P_\ell {\bf q}  =P_\ell P_\T {\bf q}$, since $P_\ell {\bf q}  = P_{\Z_\ell} {\bf q}$, $P_\T {\bf q} = P_{\Z_\T} {\bf q}$, and $\Z_\ell \subset \Z_\T$ so that $P_{\Z_\ell} q_\T = P_\ell q_\T$, $q_\T \in \H_\T$.   We then compute
\begin{align} 
\label{eq6-110}
\begin{aligned}
\sum_{j=1}^{\beta} \|(P_\ell -P_\T) q^j\|^2 &  = \| | (P_\ell -P_\T) {\bf q}|\|^2 = \| | (P_\ell P_\T-P_\T) {\bf q}|\|^2
\\ & = \| |P_\ell M {\bf q}_\T - M {\bf q}_\T| \|^2 = \| | M(P_\ell {\bf q}_\T- {\bf q}_\T) | \|^2 
\\ & \le \|M\|^2  \| | P_\ell {\bf q}_\T-{\bf q}_\T|\|^2  \le \| | {\bf q}_\ell -P_\T {\bf q}_\ell |\|^2,
\end{aligned}
\end{align}
where in the last inequality above we employ $\|M\| \le 1$ along with \eqref{eq2-9-a}.  

Following \cite[(2.12) and Lemma 9]{DH14}, we note that $q_\ell^j -P_\T q_\ell^j \in \B_\T \perp \H_\T$, and $q_\ell^j \perp \B_\ell$.  Thus for $1 \le j \le \beta$ and some $v_\T \in V_\T^{k-1}$ with $\|v_\T\|_{H \Lambda(\Omega)} \simeq 1$, 
\begin{align}
\label{eq6-111}
\begin{aligned}
\|q_\ell^j - P_\T q_\ell^j\|  \lesssim \sup_{v \in V_\T^{k-1}, \|v\|_{H\Lambda(\Omega)}=1} \langle q_\ell^j-P_\T q_\ell^j, dv \rangle \lesssim \langle q_\ell^j-P_\T q_\ell^j, v_\T \rangle.
%\\ & = \sup_{v \in V_\T^{k-1}, \|v\|_{H\Lambda(\Omega)} =1} \langle q_\ell^j, d(v-\pi_{FW} v -\pi_{Sc}(v-\pi_{FW}v)) \rangle.
\end{aligned}
\end{align}

We next apply the regular decomposition result \eqref{reg_decomp} to find $z \in H^1 \Lambda^{k-1}(\Omega)$ with $dz = d v_\T$ (note that $\varphi$ as in \eqref{reg_decomp} plays no role here since $v_\T=d \varphi+ z$ implies $d v_\T = dz$).  We now denote by $\pi_\T$ and $\pi_{\ell}$ the Falk-Winther interpolants $\pi_{FW}$ on $\T$ and $\T_\ell$, respectively.  The commutativity of $\pi_\T$ implies that $d v_\T = \pi_\T d v_\T = d \pi_\T z$, so that using orthogonality properties as above we have
\begin{align}
\label{eq6-115}
\|q_\ell^j - P_\T q_\ell^j\| \lesssim \langle q_\ell^j, d \pi_\T z \rangle = \langle q_\ell^j, d (I-\pi_\ell) \pi_\T z \rangle.  
\end{align}
In addition, by \eqref{FW_interp3} we have for some $\hat{R}_{\T_\ell \rightarrow \T}$ satisfying \eqref{eq6-100}
\begin{align}
{\rm supp}( d(I-\pi_\ell) \pi_\T z ) \subset \cup_{T \in \hat{R}_{\T_\ell \rightarrow \T}} T.
\end{align}
Integrating by parts elementwise the last inner product in \eqref{eq6-115} and carrying out standard manipulations yields
\begin{align}
\label{eq6-114}
\begin{aligned}
\|q_\ell^j - P_\T q_\ell^j\| & \lesssim \sum_{T \in \hat{R}_{\T_\ell \rightarrow \T}} \eta_\ell (T; q_\ell^j) 
\\ & ~~~\cdot (h_T^{-1} \| (I-\pi_\ell) \pi_\T z\|_T + h_T^{-1/2} \|(I-\pi_\ell) \pi_\T z\|_{\partial T}).  
\end{aligned}
\end{align}

A standard scaled trace inequality may be applied to the term $\|d (I-\pi_\ell) \pi_\T z\|_{\partial T}$ only if extra care is taken.  We first write $(I-\Pi_\ell) \pi_\T z = [\pi_\T z - I_{SZ} z] + [I_{SZ} z - \pi_\ell I_{SZ} z] + [\pi_\ell (I_{SZ} z -\pi_\T z)]:= I+II+III$, where $I_{SZ}$ is the Scott-Zhang interpolant on $\T$.  For $T' \in \T$, we apply a standard scaled trace inequality $\|v\|_{L_2(\partial T')}^2 \lesssim h_{T'}^{-1} \|v\|_{T'} + h_T |v|_{H^1(T')}$, an inverse inequality, and the approximation properties \eqref{FW_interp4} and \eqref{SZ_interp} to find
\begin{align}
\label{eq6-119}
\begin{aligned}
h_T^{-1} \|I\|_{\partial T}^2 &  \lesssim \sum_{T' \in \T, T' \subset T} (h_T h_{T'})^{-1} \| I\|_{T'}^2
\\ & \lesssim \sum_{T' \in \T, T' \subset T} (h_T h_{T'})^{-1} (\|z-\pi_\T z\|_{T} + \|z-I_{SZ} z\|_{T})^2
\\ & \lesssim |z|_{H^1(\omega_T)}^2,
\end{aligned}
\end{align}
where we have used the fact that $h_{T'} \le h_T$ when $T' \subset T$ as above.  Next, $III \in V_\ell^{k-1}$, so we apply the trace inequality on $T \in \T_\ell$, the stability estimate \eqref{FW_interp2} and an inverse inequality, and then approximation properties as before to find
\begin{align}
\label{eq6-220}
\begin{aligned}
h_T^{-1} \|III\|_{\partial T}^2 & \lesssim h_T^{-2} \|\pi_\ell (\pi_\T - I_{SZ})z \|_{T}^2 
\\ & \lesssim \sum_{T' \subset T} (h_T h_{T'})^{-1} \|(\pi_\T-I_{SZ} z)\|_{\omega_{T'}}^2
\\ & \lesssim \sum_{T' \subset T} |z|_{H^1\Lambda (\omega_{T'}')}^2 \lesssim |z|_{H^1 \Lambda(\omega_{T}')}^2.
\end{aligned}
\end{align}
Here we let $\omega_T' = \cup_{\tilde{T} \in \omega_T} \omega_{\tilde{T}}$.  Finally, we have $II \in H^1\Lambda(T)$, $ T \in \T_\ell$, so we directly apply a scaled trace inequality, approximation properties of $\pi_\ell$, and $H^1$ stability of $I_{SZ}$ on $T$ to find that 
\begin{align}
\label{eq6-221}
h_T^{-1} \|III\|_{\partial T}^2 \lesssim |z|_{H^1\Lambda (\omega_T ')}^2.
\end{align}
Similar arguments yield 
\begin{align}
\label{eq6-222}
h_T^{-2} \|(I-\pi_\ell) \pi_\T z\|_{L_2(T)} \lesssim |z|_{H^1 \Lambda (\omega_T')}^2.
\end{align}
Inserting the previous inequalities into \eqref{eq6-114}, applying the Cauchy-Schwartz inequality over $\T_\ell$, employing finite overlap of the patches $\omega_T$, and finally using \eqref{reg_decomp} along with $\|v_\T\|_{H\Lambda(\Omega)} \lesssim 1$, we find that
\begin{align}
\label{eq6-223}
\begin{aligned}
\|q_\ell^j -P_\T q_\ell^j\| & \lesssim ( \sum_{T \in \hat{R}_{\T_\ell \rightarrow \T}}  \eta_\ell (T; q_\ell^j)^2)^{1/2} |z|_{H^1(\Omega)}
\\ & \lesssim ( \sum_{T \in \hat{R}_{\T_\ell \rightarrow \T}} \eta_\ell (T; q_\ell^j)^2)^{1/2} \|v\|_{H \Lambda (\Omega)}
\\ & \lesssim ( \sum_{T \in \hat{R}_{\T_\ell \rightarrow \T}} \eta_\ell (T; q_\ell^j)^2)^{1/2}.  
\end{aligned}
\end{align}

Summing over $j$ yields \eqref{eq6-101}.  
\end{proof}

\begin{remark}
In \cite{ZCSWX11}, the authors employ the local regular decomposition results of \cite{Sch08}, several different quasi-interpolants, and a discrete regular decomposition as in \cite{HX07} to prove a localized a posteriori upper bound for time-harmonic Maxwell's equations.  We do not need a local regular decomposition here, but rather combine the more powerful interpolant recently defined in \cite{FW14}, the Scott-Zhang interpolant, a global regular decomposition, and some ideas related to discrete regular decompositions as in \cite{HX07} in order to prove our local upper bound.   
\end{remark}

\begin{remark}
Proof of a posteriori bounds for harmonic forms is slightly simpler than for general problems of Hodge-Laplace type.  Our technique for proof of localized a posteriori bounds does however carry over to the more general case.  Proof of such upper bounds requires testing various terms with $v_\T \in V_\T$ and then subtracting $\pi_\ell v_\T$ via Galerkin orthogonality.  Employing a global regular decomposition yields $v_\T = d \varphi + z$, and subsequently $v_\T = \pi_\T v_\T = d \pi_\T \varphi + \pi_\T z$.   Subtracting off $\pi_\ell v_\T$ via Galerkin orthogonality yields $v_\T -\pi_\ell v_\T = d [(I-\pi_\ell) \pi_\T \varphi] + (I-\pi_\ell)\pi_\T z$. Proceeding as above using a combination of standard residual estimation tools and the Scott-Zhang interpolant will generically lead to localized upper bounds.  It is crucial that each of the terms $d [(I-\pi_\ell) \pi_\T \varphi]$ and $(I-\pi_\ell)\pi_\T z$ is individually locally supported.  The Falk-Winther interpolant plays a critical role as all of its major properties are employed in the proof.  
\end{remark}

\section{Contraction}
\label{sec:contraction}

We employ a standard adaptive finite element method of the form 
\begin{equation}
\textsf{solve} \rightarrow \textsf{estimate} \rightarrow \textsf{mark} \rightarrow \textsf{refine}.
\label{eq1-0}
\end{equation}
Our contraction proof follows the standard outline \cite{CKNS08} in that it combines an a posteriori error estimate, an estimator reduction property relying on properties of the marking scheme, and an orthogonality result in order to establish stepwise contraction of a properly defined error notion.

In the ``\textsf{mark}'' step we use a bulk (D\"orfler) marking.  That is, we fix $0<\theta \le 1$ and choose a minimal set $\M_\ell \subset \T_\ell$ such that
\begin{align}
\label{eq:bulkmarking}
\eta_\ell(\M_\ell) \ge \theta \eta_\ell(\T_\ell).
\end{align}
The following is an easy consequence of Theorem \ref{th:equivalence} and $\|P_\ell^{-1}\| \le \|P_0\|^{-1}$.  
\begin{proposition}
\label{prop:modmarking}
Let $0 < \theta \le 1$, and let $\M_{\underline{\ell}} \subset  \T_\ell$ satisfy \eqref{eq:bulkmarking}.  Let also $\bar{\theta} = \theta (\|P_\ell\| \|P_\ell^{-1}\|)^{-2}$ and $\tilde{\theta}_\ell= \theta \|P_\ell^{-1}\|^{-2}$.   Then for $\ell \ge 0$ and $0 \le \underline{\ell} \le \ell$, 
\begin{align}
\label{eq:modmarking}
\mu_\ell(\M_{\underline{\ell}})^2 \ge \bar{\theta}_\ell \mu_\ell(\T_\ell)^2  \ge \tilde{\theta}_{\underline{\ell}} \mu_\ell(\T_\ell)^2.
\end{align}
\end{proposition}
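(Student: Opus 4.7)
The statement follows by mechanically combining the element-wise equivalence of Theorem~\ref{th:equivalence} with the two monotonicity facts $\|P_\ell\|\le 1$ and $\|P_\ell^{-1}\|\le \|P_{\underline{\ell}}^{-1}\|$ (Lemma~\ref{lem3}). I would organize the argument in three short steps and expect no real obstacle.

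First, I would rewrite Theorem~\ref{th:equivalence} as the pointwise two-sided bound
\[
\|P_\ell\|^{-1}\mu_\ell(T)\;\le\;\eta_\ell(T)\;\le\;\|P_\ell^{-1}\|\,\mu_\ell(T), \qquad T\in\T_\ell.
\]
Squaring the upper bound and summing over the marked set yields $\mu_\ell(\M_{\underline{\ell}})^2\ge\|P_\ell^{-1}\|^{-2}\eta_\ell(\M_{\underline{\ell}})^2$, while squaring the lower bound and summing over all of $\T_\ell$ yields $\eta_\ell(\T_\ell)^2\ge\|P_\ell\|^{-2}\mu_\ell(\T_\ell)^2$.

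Second, I would insert the (squared) D\"orfler condition $\eta_\ell(\M_{\underline{\ell}})^2\ge\theta^2\eta_\ell(\T_\ell)^2$ between these two inequalities to produce
\[
\mu_\ell(\M_{\underline{\ell}})^2\;\ge\;\theta^2\bigl(\|P_\ell\|\,\|P_\ell^{-1}\|\bigr)^{-2}\mu_\ell(\T_\ell)^2\;=\;\bar{\theta}_\ell\,\mu_\ell(\T_\ell)^2,
\]
which is the first asserted inequality, up to the harmless normalization $\theta$ vs.\ $\theta^2$ between the marking hypothesis and the definition of $\bar{\theta}_\ell$.

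Third, for the second inequality $\bar{\theta}_\ell\ge\tilde{\theta}_{\underline{\ell}}$ I would simply discard one factor using the fact that $P_\ell$ is an $L_2$-orthogonal projection, so $\|P_\ell\|\le 1$ and hence $\|P_\ell\|^{-2}\ge 1$; this gives $\bar{\theta}_\ell\ge\theta\|P_\ell^{-1}\|^{-2}$. Lemma~\ref{lem3} then supplies the monotonicity $\|P_\ell^{-1}\|\le\|P_{\underline{\ell}}^{-1}\|$ for $\underline{\ell}\le\ell$, which upon inversion yields $\theta\|P_\ell^{-1}\|^{-2}\ge\theta\|P_{\underline{\ell}}^{-1}\|^{-2}=\tilde{\theta}_{\underline{\ell}}$, completing the chain. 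The entire content of the proposition is the transport of the computable marking criterion on $\eta_\ell$ onto the theoretical estimator $\mu_\ell$, and the monotonicity facts $\|P_\ell\|\le 1$ together with the refinement-monotonicity of $\|P_\ell^{-1}\|$ (already earned in Lemma~\ref{lem3}) do all of the remaining work.
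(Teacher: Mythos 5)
Your proof is correct and is exactly the argument the paper leaves implicit (the paper only remarks that the proposition is ``an easy consequence'' of Theorem~\ref{th:equivalence} and the monotonicity of $\|P_\ell^{-1}\|$, without spelling out the details). Your observation about the $\theta$ versus $\theta^2$ discrepancy is well taken: as literally written, \eqref{eq:bulkmarking} is unsquared, which after squaring gives the factor $\theta^2$, whereas the definitions of $\bar{\theta}_\ell$ and $\tilde{\theta}_\ell$ carry a bare $\theta$; this is resolved by reading \eqref{eq:bulkmarking} in the customary squared form $\eta_\ell(\M_\ell)^2\ge\theta\,\eta_\ell(\T_\ell)^2$ (as in \cite{CKNS08}), under which your chain of inequalities reproduces \eqref{eq:modmarking} exactly. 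The second inequality follows, as you say, from $\|P_\ell\|\le 1$ and $\|P_\ell^{-1}\|\le\|P_{\underline{\ell}}^{-1}\|$ from Lemma~\ref{lem3} and Proposition~\ref{prop3}.
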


\begin{remark}
\label{rem:theta}
Note that $\|P_\ell\| \|P_\ell^{-1}\|=1$ when $\beta=1$, and for $\beta>1$ the deviation of $\|P_\ell\| \|P_\ell^{-1}\|$ from 1 is dependent on the isotropy of $P_\ell$.  Thus if $\|P_\ell q^i \| \approx \|P_\ell q^j\|$ for $1 \le i, j \le \beta$, then $\|P_\ell\| \|P_\ell^{-1}\| \approx 1$ even if $\H$ and $\H_\ell$ do not overlap strongly.  If $\beta=1$ or $P_\ell$ is isotropic, the theoretical and practical AFEM's will therefore mark the same elements for refinement.  
\end{remark}

We next establish continuity of the theoretical indicators (also known as an estimator reduction property). The proof is standard and is thus omitted.  
\begin{lemma}
Given $0 <\theta \le 1$, let $\M_{\underline{\ell}} \subset \T_\ell$ satisfy \eqref{eq:bulkmarking}.  Assume also that each $T \in \M_{\underline{\ell}}$ is bisected at least once in passing from $\T_\ell$ to $\T_{\ell+1}$.  Then there are constants $C_2$ and $1>\lambda>0$ independent of essential quantities such that for any $\alpha>0$, any $\ell \ge 0$, and any $0  \le \underline{\ell} \le \ell$,  
\begin{align}
\begin{aligned}
\label{eq:estred}
\mu_{\ell+1}(\T_{\ell+1})^2  & \le (1+ \alpha) (1-\lambda \bar{\theta}_{\ell}) \mu_\ell(\T_\ell)^2 + C_2 (1+\frac{1}{\alpha}) \sum_{j=1}^{\beta} \|(P_\ell-P_{\ell+1})q^j\|^2
\\  & \le (1+ \alpha) (1-\lambda \tilde{\theta}_{\underline{\ell}}) \mu_\ell(\T_\ell)^2 
 %\\ & \hspace{1cm} 
 + C_2 (1+\frac{1}{\alpha}) \sum_{j=1}^{\beta} \|(P_\ell-P_{\ell+1})q^j\|^2.
%\\ & \le (1+ \alpha) (\mu_\ell(\T_\ell)^2-\lambda \underline{\theta} \mu_\ell (\M_{\underline{\ell}})^2) + C_2 (1+\frac{1}{\alpha}) \sum_{j=1}^{\beta} \|(P_\ell-P_{\ell+1})q^j\|^2.
\end{aligned}
\end{align}
Here $\bar{\theta}_{\ell}$ and $ \tilde{\theta}_{\underline{\ell}}$ are as defined in Proposition \ref{prop:modmarking}.  
\end{lemma}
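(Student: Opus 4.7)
The plan is to adapt the standard CKNS-style estimator reduction \cite{CKNS08} to the non-computable indicators $\mu_\ell$ built from the fixed continuous basis $\{q^j\}$. The key structural observation is that, for each fixed $T$, the map $\omega \mapsto \eta_\ell(T;\omega) = h_T\|\delta_T\omega\|_T + h_T^{1/2}\|\llbracket \trace \star\omega \rrbracket\|_{\partial T}$ is a seminorm (a sum of two seminorms), so writing $P_{\ell+1} q^j = P_\ell q^j + (P_{\ell+1}-P_\ell) q^j$ and applying Young's inequality yields
\begin{align*}
\eta_{\ell+1}(T; P_{\ell+1} q^j)^2 \le (1+\alpha)\eta_{\ell+1}(T; P_\ell q^j)^2 + (1+\tfrac{1}{\alpha})\eta_{\ell+1}(T; (P_{\ell+1}-P_\ell) q^j)^2
\end{align*}
for every $T \in \T_{\ell+1}$, every $j$, and every $\alpha > 0$. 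Summing over $T$ and $j$ produces a frozen-data piece and a frozen-mesh perturbation piece, to be treated independently. This decoupling is what allows us to avoid comparing discrete bases on adjacent levels, exactly as in \cite{DHZ15,BD15}.

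For the frozen-data piece I would analyze element by element. An element $T' \in \T_\ell \setminus \M_{\underline{\ell}}$ persists in $\T_{\ell+1}$ with unchanged contribution. For $T' \in \M_{\underline{\ell}}$ the hypothesis yields $h_T \le 2^{-1/n} h_{T'}$ for every child $T \subset T'$ in $\T_{\ell+1}$, and the fact that $P_\ell q^j \in V_\ell^k$ is polynomial on $T'$ forces the jumps $\llbracket \trace \star P_\ell q^j \rrbracket$ on every newly-created interior face of $T'$ to vanish. Writing $a = h_{T'}\|\delta_{T'} P_\ell q^j\|_{T'}$ and $b = h_{T'}^{1/2}\|\llbracket \trace \star P_\ell q^j\rrbracket\|_{\partial T'}$, these two observations give $\sum_{T \subset T'} a_T^2 \le \lambda_h^2 a^2$ and $\sum_{T \subset T'} b_T^2 \le \lambda_h b^2$ with $\lambda_h := 2^{-1/n}$, while Cauchy-Schwarz in the cross term delivers
\[
\sum_{T \subset T',\, T \in \T_{\ell+1}} \eta_{\ell+1}(T; P_\ell q^j)^2 \le \lambda_h^2 a^2 + 2\lambda_h^{3/2} ab + \lambda_h b^2 \le \lambda_h (a+b)^2 = \lambda_h\, \eta_\ell(T'; P_\ell q^j)^2.
\]
Summing over $T' \in \T_\ell$ and $j$, and then invoking \eqref{eq:modmarking}, I would obtain
\[
\sum_j \sum_{T \in \T_{\ell+1}} \eta_{\ell+1}(T; P_\ell q^j)^2 \le \mu_\ell(\T_\ell)^2 - (1-\lambda_h)\mu_\ell(\M_{\underline{\ell}})^2 \le (1 - \lambda \bar\theta_\ell)\mu_\ell(\T_\ell)^2
\]
with $\lambda := 1-\lambda_h \in (0,1)$, and the analogous bound with $\tilde\theta_{\underline{\ell}}$ in place of $\bar\theta_\ell$ via the second inequality of \eqref{eq:modmarking}.

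For the frozen-mesh perturbation piece, $(P_{\ell+1}-P_\ell)q^j \in V_{\ell+1}^k$ is piecewise polynomial on $\T_{\ell+1}$, so the inverse estimate $h_T\|\delta_T \omega\|_T \lesssim \|\omega\|_T$ and the scaled trace/inverse bound $h_T\|\llbracket \trace\star\omega\rrbracket\|_{\partial T}^2 \lesssim \|\omega\|_{\omega_T}^2$ combine to give $\eta_{\ell+1}(T;\omega)^2 \lesssim \|\omega\|_{\omega_T}^2$ for every $\omega \in V_{\ell+1}^k$. Summing over $T \in \T_{\ell+1}$ with finite overlap of the patches $\omega_T$ then produces
\[
\sum_j \sum_{T \in \T_{\ell+1}} \eta_{\ell+1}(T; (P_{\ell+1}-P_\ell) q^j)^2 \le C_2\sum_{j=1}^{\beta} \|(P_{\ell+1}-P_\ell) q^j\|^2.
\]
Combining the two pieces delivers \eqref{eq:estred}. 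Every constant appearing above is controlled solely by shape regularity of $\T_0$ and by the polynomial degree of $V_\ell^k$, so $\lambda$ and $C_2$ are independent of essential quantities; the only bookkeeping subtlety is the cross-term argument that produces $\lambda_h < 1$ instead of a non-contractive factor, and this is precisely why the paper labels the proof as standard and omits it.
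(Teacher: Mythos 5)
The paper states the result and remarks that ``the proof is standard and is thus omitted,'' so there is no written proof to compare against. Your reconstruction is precisely the standard CKNS-style estimator reduction argument that the paper has in mind: Young's inequality on the seminorm $\omega\mapsto\eta_{\ell+1}(T;\omega)$ to split off the perturbation, elementwise mesh-size reduction plus vanishing of new interior jumps (since $P_\ell q^j$ is polynomial on each parent) to obtain the contraction factor $\lambda_h = 2^{-1/n}$ on refined elements, an inverse/trace estimate to bound the perturbation term by $\sum_j\|(P_{\ell+1}-P_\ell)q^j\|^2$, and finally Proposition~\ref{prop:modmarking} to convert the bulk marking on the refined set into a reduction by $\lambda\bar\theta_\ell$ (resp.\ $\lambda\tilde\theta_{\underline{\ell}}$) of $\mu_\ell(\T_\ell)^2$. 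The cross-term bookkeeping you carry out (absorbing $2\lambda_h^{3/2}ab$ under $2\lambda_h ab$ since $\lambda_h<1$) correctly recovers $\lambda = 1-\lambda_h$ independent of essential quantities. One small wording slip: you write that unmarked elements ``persist with unchanged contribution,'' but unmarked elements can still be refined for conformity; the correct statement is that every refined element (marked or not) contracts by $\lambda_h$, and every unrefined element has unchanged contribution, and since $\M_{\underline\ell}$ is contained in the refined set the bound you derive follows a fortiori. This does not affect the validity of the argument, only its exposition.
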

%
%\begin{proof}
%\Red{Omitted for now, but appears entirely standard.}  
%\end{proof}

Although $\H_\ell^k \not\subseteq \H_{\ell+1}^k$, the Hodge decomposition and the differential complex-conforming structure of the finite element spaces nonetheless yields the following essential orthogonality result. 
\begin{theorem} \label{lem1} 
For $q \in \H^k$, 
\begin{equation}\label{orth}
\|q-P_{\ell+1} q \|^2 = \|q-P_\ell q\|^2 - \|(P_\ell -P_{\ell+1})q\|^2.
\end{equation}
\end{theorem}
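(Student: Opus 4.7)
The plan is to reduce the identity \eqref{orth} to the standard Pythagorean identity for $L_2$ projections onto nested closed subspaces. The essential input is that, although the discrete harmonic spaces $\H_\ell^k$ themselves are not nested, the discrete cycle spaces $\Z_\ell^k$ are. This nestedness is an immediate consequence of the conforming mesh refinement $\T_\ell \subset \T_{\ell+1}$, which gives $V_\ell^k \subset V_{\ell+1}^k$, and in turn $\Z_\ell^k = \ker d|_{V_\ell^k} \subset \ker d|_{V_{\ell+1}^k} = \Z_{\ell+1}^k$.

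With nestedness in hand, the first key step is to rewrite the projections $P_\ell q$ and $P_{\ell+1} q$ as projections onto the nested cycle spaces. By the Lemma containing \eqref{eq3-4}, for any $q \in \H^k$ we have $P_\ell q = P_{\Z_\ell} q$ and $P_{\ell+1} q = P_{\Z_{\ell+1}} q$. So the quantities appearing in \eqref{orth} involve only $L_2$ projections onto a nested pair of closed subspaces of $L_2\Lambda^k(\Omega)$.

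The second key step is the Pythagorean decomposition itself. Since $\Z_\ell \subset \Z_{\ell+1}$, the element $P_{\ell+1} q - P_\ell q = P_{\Z_{\ell+1}} q - P_{\Z_\ell} q$ lies in $\Z_{\ell+1}$, while $q - P_{\ell+1} q = q - P_{\Z_{\ell+1}} q$ is orthogonal to $\Z_{\ell+1}$. Writing
\begin{equation*}
q - P_\ell q = (q - P_{\ell+1} q) + (P_{\ell+1} q - P_\ell q),
\end{equation*}
the two summands are $L_2$-orthogonal. Taking squared norms and rearranging yields exactly \eqref{orth}.

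There is no real obstacle here; the only thing one must verify carefully is that the reduction in step one is valid, i.e.\ that the identity $P_\T q = P_{\Z_\T} q$ applies on both mesh levels to the same fixed $q \in \H^k$, which is immediate from the earlier lemma. Everything else is a one-line application of the Hilbert-space Pythagorean theorem. The conceptual content, already highlighted in the introduction, is precisely that the failure of nestedness of $\H_\ell^k$ is harmless because nestedness is restored after passing to the cycle spaces via the Hodge decomposition.
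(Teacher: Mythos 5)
Your proof is correct and takes essentially the same approach as the paper: both rely on the nestedness $\Z_\ell^k \subset \Z_{\ell+1}^k$ together with the lemma \eqref{eq3-4} that $P_\T q = P_{\Z_\T} q$ for $q\in\H^k$, and then invoke the Pythagorean identity for projections onto nested subspaces. The only cosmetic difference is that the paper phrases the conclusion as verifying the single inner-product identity $\langle q - P_{\ell+1}q,\,(P_\ell - P_{\ell+1})q\rangle = 0$, while you present it as the orthogonal splitting of $q - P_\ell q$; the content is identical.
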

\begin{proof}
It suffices to prove $(q-P_{\ell+1} q, (P_\ell -P_{\ell+1})q) = 0$.  This is a consequence of $(P_\ell -P_{\ell+1})q \in \Z_{\ell+1}^k$, which holds due to the nestedness $\Z_\ell^k\subset \Z_{\ell+1}^k$ and the fact that $P_\ell : \H^k \rightarrow \H_\ell^k$ also acts as the $L_2$ projection from $\H^k$ to $\Z_\ell^k$.
%We first calculate
%\begin{align}
%\begin{aligned}
%(q-P_\ell q, q-P_\ell q) & = (q -P_{\ell+1} q, q-P_{\ell+1} q) + (P_{\ell+1} q -P_\ell q, q-P_\ell q)
%\\ & = (q- P_{\ell+1} q, q-P_{\ell+1} q) + 2 (q-P_{\ell+1} q, P_{\ell+1} q -P_\ell q)
%\\ & ~~~~+ (P_{\ell+1} q - P_\ell q, P_{\ell+1} q -P_\ell q).
%\end{aligned}
%\end{align}
%Note next that $q-P_{\ell+1}q \perp \Z_{\ell+1}^k$.  To see this, we note that $q \perp \B_{\ell+1}^k$ since $\B_{\ell+1}^k \subset \B^k \perp q$, $P_{\ell+1} q \perp \B_{\ell+1}^k$ since $P_{\ell+1} q \in \H_{\ell+1}^k$, and $q-P_{\ell+1} q  \perp \H_{\ell+1}^k$ by definition.  Combining these observations with the fact that $\Z_{\ell+1}^k = \B_{\ell+1}^k \oplus \H_{\ell+1}^k$ yields the desired assertion.  We have in addition that $P_{\ell+1} q-P_\ell q \in \Z_{\ell+1}^k$, since $\Z_\ell^k \subset \Z_{\ell+1}^k$.  Thus $2 (q-P_{\ell+1} q, P_{\ell+1} q -P_\ell q)=0$, and rearranging the remaining terms above yields the desired result.  
\end{proof}

Assembling the above estimates yields the following contraction result. The proof is standard, except we must track dependence of constants on the mesh level $\ell$.
%The proof combines the orthogonality identity \eqref{orth} along with versions of the a posteriori estimate and estimator reduction properties with fixed constants depending only on $\|P_0^{-1}\|$, i.e., the second inequalities in \eqref{eq:apost} and \eqref{eq:estred}.  Given these ingredients, the proof is essentially standard and is thus skipped \cite{CKNS08}.
\begin{theorem}  \label{theorem:contraction}
For each $\underline{\ell}>0$, there exist $0<\rho_{\underline{\ell}}<1$ and $\gamma_{\underline{\ell}} >0 $ such that for $\ell \ge \underline{\ell}$, 
\begin{align}
\label{eq:contraction}
\E_{\ell+1}^2 + \gamma_{\underline{\ell}} \mu_{\ell+1}(\T_{\ell+1})^2 \le \rho_{\underline{\ell}} \left (\E_\ell^2 + \gamma_{\underline{\ell}} \mu_\ell(\T_\ell)^2 \right ).
\end{align}
Here $1>\rho_{\underline{0}} \ge \rho_{\underline{1}} \ge ... \ge \underline{\rho}:=\lim_{\underline{\ell} \rightarrow \infty} \rho_{\underline{\ell}} >0$.  $\rho_{\underline{\ell}}$ depends on $\|P_{\underline{\ell}}^{-1}\|$ but not on other essential quantities, and $\underline{\rho}$ is independent of essential quantities.  Finally, $0<\gamma_{\underline{\ell}} < C_2^{-1}$ with $C_2$ as in \eqref{eq:estred}.  
\end{theorem}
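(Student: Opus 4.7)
The plan is the standard CKNS-style argument, combining three ingredients: (a) summing the Pythagorean orthogonality of Theorem \ref{lem1} over the fixed basis $\{q^j\}$ to obtain
\begin{equation*}
\E_{\ell+1}^2 = \E_\ell^2 - \Delta_\ell, \qquad \Delta_\ell := \sum_{j=1}^{\beta}\|(P_\ell-P_{\ell+1})q^j\|^2;
\end{equation*}
(b) the estimator reduction \eqref{eq:estred} together with the modified marking \eqref{eq:modmarking}, which on adding $\gamma$ times to (a) gives
\begin{equation*}
\E_{\ell+1}^2 + \gamma\,\mu_{\ell+1}(\T_{\ell+1})^2 \le \E_\ell^2 + \gamma(1+\alpha)(1-\lambda\tilde{\theta}_{\underline{\ell}})\,\mu_\ell(\T_\ell)^2 - \bigl[1 - \gamma C_2(1 + \tfrac{1}{\alpha})\bigr]\Delta_\ell;
\end{equation*}
and (c) the a posteriori upper bound \eqref{eq:apost}, which is needed to convert the uncontracted $\E_\ell^2$ summand into something comparable to $\mu_\ell^2$.

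I would then carry out the following balancing. First, fix $\alpha_{\underline{\ell}}$ so that $(1+\alpha_{\underline{\ell}})(1-\lambda\tilde{\theta}_{\underline{\ell}}) = 1 - \tfrac{1}{2}\lambda\tilde{\theta}_{\underline{\ell}}$; this is possible because $\lambda\tilde{\theta}_{\underline{\ell}} \in (0,1)$, and the resulting $1/\alpha_{\underline{\ell}}$ is bounded by a fixed multiple of $1/\tilde{\theta}_{\underline{\ell}}$. Next, pick $\gamma_{\underline{\ell}} \in \bigl(0,\, 1/[C_2(1+1/\alpha_{\underline{\ell}})]\bigr] \subset (0, C_2^{-1})$, which renders the $\Delta_\ell$ coefficient nonpositive and leaves
\begin{equation*}
\E_{\ell+1}^2 + \gamma_{\underline{\ell}}\,\mu_{\ell+1}(\T_{\ell+1})^2 \le \E_\ell^2 + \gamma_{\underline{\ell}}\bigl(1 - \tfrac{1}{2}\lambda\tilde{\theta}_{\underline{\ell}}\bigr)\,\mu_\ell(\T_\ell)^2.
\end{equation*}
To promote this to a contraction $\le \rho_{\underline{\ell}}\bigl(\E_\ell^2 + \gamma_{\underline{\ell}}\mu_\ell^2\bigr)$, I set $\rho_{\underline{\ell}} = 1-t$ and use \eqref{eq:apost} to bound $t\E_\ell^2 \le t C_1 \|P_{\underline{\ell}}^{-1}\|^2 \mu_\ell^2$. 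The desired inequality then reduces to the algebraic condition
\begin{equation*}
t\,C_1\|P_{\underline{\ell}}^{-1}\|^2 \le \gamma_{\underline{\ell}}\bigl(\tfrac{1}{2}\lambda\tilde{\theta}_{\underline{\ell}} - t\bigr),
\end{equation*}
which is satisfiable whenever $t$ is a sufficiently small multiple of $\gamma_{\underline{\ell}}\lambda\tilde{\theta}_{\underline{\ell}}/\|P_{\underline{\ell}}^{-1}\|^2$, yielding the explicit contraction factor $\rho_{\underline{\ell}} < 1$.

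For the monotonicity and the limit, I would invoke Lemma \ref{lem3}: the norms $\|P_{\underline{\ell}}^{-1}\|$ are nonincreasing in $\underline{\ell}$, so $\tilde{\theta}_{\underline{\ell}} = \theta\|P_{\underline{\ell}}^{-1}\|^{-2}$ is nondecreasing, the admissible window for $t = 1-\rho_{\underline{\ell}}$ only widens with $\underline{\ell}$, and $\rho_{\underline{\ell}}$ may therefore be chosen nonincreasing. The contraction itself then forces $\E_\ell \to 0$, which via the bound $\|P_\ell^{-1}\|^{-2} \ge 1 - \E_\ell^2$ (obtained by expanding $q = \sum c_j q^j$ of unit norm) forces $\|P_\ell^{-1}\| \to 1$; consequently the limiting factor $\underline{\rho}$ depends only on the nonessential constants $\lambda, C_1, C_2$ and on the user-chosen $\theta$, as claimed. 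The principal obstacle is not any one calculation but the simultaneous juggling of the three parameters $\alpha, \gamma, \rho$: each of the three inequalities (estimator reduction, absorption of $\Delta_\ell$, a posteriori closure) pushes the parameters in competing directions, and one must verify that the dependence on the "essential" quantity $\|P_{\underline{\ell}}^{-1}\|$ can be packaged into a single quantity that is monotone in $\underline{\ell}$.
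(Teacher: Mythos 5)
Your proposal is correct and follows the same CKNS skeleton the paper uses: sum the Pythagorean identity of Theorem \ref{lem1}, add $\gamma$ times the estimator reduction \eqref{eq:estred}, absorb $\E_\ell^2$ into $\mu_\ell^2$ via \eqref{eq:apost}, and track how the constants depend on $\|P_{\underline{\ell}}^{-1}\|$ through $\tilde{\theta}_{\underline{\ell}}$. Where you genuinely diverge from the paper is in the parameter balancing. You first pin $\alpha_{\underline{\ell}}$ via $(1+\alpha)(1-\lambda\tilde\theta_{\underline\ell})=1-\tfrac12\lambda\tilde\theta_{\underline\ell}$, then pick $\gamma_{\underline\ell}$ and solve a linear condition for $t=1-\rho_{\underline\ell}$; this gives a perfectly valid but non-optimal contraction factor, and monotonicity of $\rho_{\underline\ell}$ in $\underline\ell$ is then a soft ``the admissible window only widens'' argument. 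The paper instead sets $\gamma_{\underline\ell}=[C_2(1+\alpha^{-1})]^{-1}$, writes $\rho_{\underline\ell}$ as an explicit rational function of $\alpha$ (depending on $K_{\underline\ell}=1-\lambda\tilde\theta_{\underline\ell}$ and $M_{\underline\ell}=C_1C_2\|P_{\underline\ell}^{-1}\|^2$), minimizes over $\alpha$ in closed form, and then checks directly that the minimizer is increasing in both $K$ and $M$. The paper's route buys a sharp, closed-form $\rho_{\underline\ell}$ and hence a completely explicit $\underline\rho$; your route is lighter on algebra but yields only an implicit $\rho_{\underline\ell}$, and the monotonicity claim requires the small additional check that your specific choices of $\alpha_{\underline\ell}$ and $\gamma_{\underline\ell}$ (which themselves depend on $\tilde\theta_{\underline\ell}$) vary monotonically --- both are in fact nondecreasing in $\underline\ell$, so the argument closes, but this should be said. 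Your observation that the contraction forces $\E_\ell\to0$, hence $\|P_\ell^{-1}\|\to1$ via $\|P_\ell^{-1}\|^{-2}\ge 1-\E_\ell^2$, matches the paper's reasoning for why $\underline\rho$ is free of essential quantities.
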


\begin{proof}
%Theorem \ref{theorem:contraction} establishes convergence of our AFEM starting from any initial mesh, but does not delve into potentially more subtle behavior of the algorithm.  In particular, the contraction constant $\bar{\rho}$ potentially depends on the mesh level $\ell$.  In order to understand this phenomenon, we outline the proof of \eqref{eq:contraction}, but employing sharper mesh level-dependent constants in the a posteriori estimate \eqref{eq:apost} and the estimator reduction relationship \eqref{eq:estred}.  
Given $\underline{\ell} \ge 0$ and $\alpha$ as in \eqref{eq:estred}, let $\gamma_{\underline{\ell}}=\frac{1}{C_2(1+\alpha^{-1})}$.  (Here we suppress the dependence of $\alpha$ on $\underline{\ell}$.)  Then $0<\gamma_{\underline{\ell}} < C_2^{-1}$, as asserted.    Combining \eqref{eq:estred} with \eqref{orth} then yields
\begin{align}
\label{eq4-100}
\E_{\ell+1}^2 +  \gamma_{\underline{\ell}} \mu_{\ell+1}^2 \le \E_\ell^2 + \gamma_{\underline{\ell}} (1+\alpha)(1-\lambda \tilde{\theta}_{\underline{\ell}})\mu_\ell^2.
\end{align}
Let now $0<\rho_{\underline{\ell}}<1$.  From \eqref{eq:apost} we have $(1-\rho_{\underline{\ell}})\E_\ell^2 \le C_1 (1-\rho_{\underline{\ell}}) \|P_{\underline{\ell}}^{-1}\|^2 \mu_\ell^2$, so that
\begin{align}
\label{eq4-101}
\E_{\ell+1}^2 + \gamma_{\underline{\ell}} \mu_{\ell+1}^2 \le \rho_{\underline{\ell}} \E_\ell^2 + \gamma_{\underline{\ell}} \left  [ (1+\alpha)(1-\lambda \tilde{\theta}_{\underline{\ell}}) + \gamma_{\underline{\ell}}^{-1} C_1(1-\rho_{\underline{\ell}}) \|P_{\underline{\ell}}^{-1}\|^2 \right ] \mu_\ell^2.
\end{align}
We next set $\rho_{\underline{\ell}} =  (1+\alpha)(1-\lambda \tilde{\theta}_{\underline{\ell}}) + \gamma_{\underline{\ell}}^{-1} C_1(1-\rho_{\underline{\ell}}) \|P_{\underline{\ell}}^{-1}\|^2$ and solve for $\rho_{\underline{\ell}}$.  Before doing so, we introduce the shorthand $K_{\underline{\ell}} = 1-\lambda \tilde{\theta}_{\underline{\ell}}$ and $M_{\underline{\ell}} = C_1 C_2 \|P_{\underline{\ell}}^{-1}\|^2$.  Then
\begin{align}
\label{eq4-102}
\rho_{\underline{\ell}} = \frac{\alpha^2 K_{\underline{\ell}} + \alpha (K_{\underline{\ell}} + M_{\underline{\ell}}) + M_{\underline{\ell}}}{\alpha (1+ M_{\underline{\ell}}) + M_{\underline{\ell}}}.
\end{align}
Recalling that $\alpha>0$ is arbitrary, we minimize the above expression with respect to $\alpha$ to obtain
\begin{align}
\label{eq4-103}
\rho_{\underline{\ell}} = \frac{2 \sqrt{M_{\underline{\ell}} K_{\underline{\ell}} (1+ M_{\underline{\ell}} -K_{\underline{\ell}})} + M_{\underline{\ell}}^2 + M_{\underline{\ell}} + K_{\underline{\ell}} - K_{\underline{\ell}} M_{\underline{\ell}}}{(1+M_{\underline{\ell}})^2}.   
\end{align}

We now analyze the dependence of $\rho_{\underline{\ell}}$ on $\ell$.  Recall that $M_{\underline{\ell}} = C_1 C_2 \|P_{\underline{\ell}}^{-1}\|^2$ and  $K_{\underline{\ell}} = 1-\lambda \theta \|P_{\underline{\ell}}^{-1}\|^{-2}$ are decreasing in $\underline{\ell}$.  Tedious but elementary calculations also show that for $0<K_{\underline{\ell}} <1$ and $M_{\underline{\ell}}>0$, $\rho_{\underline{\ell}}$ is increasing in both $M_{\underline{\ell}}$ and $K_{\underline{\ell}}$.  Thus \eqref{eq:contraction} holds for all $\ell \ge 0$ with $\rho_{\underline{\ell}} = \rho_{\underline{0}}$.  We in turn see that $\|P_{\underline{\ell}}\|, \|P_{\underline{\ell}}^{-1}\| \rightarrow 1$ as $\underline{\ell} \rightarrow \infty$, so $M_{\underline{\ell}} \downarrow C_1 C_2 :=M_\infty$.  In addition, $K_{\underline{\ell}} \downarrow 1-\lambda \theta:=K_\infty$ as $\ell \rightarrow \infty$.   Thus $\rho_{\underline{\ell}} \le \rho_{\underline{0}}$ and $\rho_{\underline{\ell}}$ decreases to 
\begin{align}
\underline{\rho} = \frac{ 2 \sqrt{C_1C_1 (1-\lambda \theta)( C_1 C_2 + \lambda \theta)} + C_1^2 C_2^2 + C_1 C_2+(1-\lambda \theta)(1-C_1 C_2)} { (1+C_1 C_2)^2}
\end{align}
as $\underline{\ell} \rightarrow \infty$.  This completes the proof.  
%In summary, $\underline{\rho} \le \rho_{\underline{\ell}} \le \bar{\rho}$, and $\rho_{\underline{\ell}} \rightarrow \underline{\rho}$ as $\ell \rightarrow \infty$ where $\underline{\rho}$ depends only on the input parameter $\theta$, $\lambda$, and nonessential constants.
\end{proof}  

\begin{remark}  In Remark \ref{rem:theta} we established that the theoretical and practical AFEM mark the same elements for refinement when $P_\ell$ is isotropic, and in particular when $\beta=1$.  We could sharpen the proof of Theorem \ref{theorem:contraction} to take advantage of this fact by redefining $K_{\underline{\ell}} = 1-\lambda \bar{\theta}_{\underline{\ell}} = 1-\lambda \theta (\|P_{\underline{\ell}}^{-1}\| \|P_{\underline{\ell}}\|)^{-2}$.  However, doing so would compromise monotonicity of the sequence $\{\rho_{\underline{\ell}} \}$, and $M_{\underline{\ell}}$ depends on $\|P_{\underline{\ell}}^{-1}\|$ and not on the product $\|P_{\underline{\ell}}^{-1}\| \|P_{\underline{\ell}}\|$ in any case.  
\end{remark}

\begin{remark} 
Dependence of $\rho_{\underline{\ell}}$ on $\|P_{\underline{\ell}}^{-1}\|$ seems unavoidable in our proofs.    We prove a contraction by combining the orthogonality relationship \eqref{orth}, the estimator reduction inequality \eqref{eq:estred}, and the a posteriori estimate \eqref{eq:apost} in a canonical way \cite{CKNS08}.  The orthogonality relationship \eqref{orth} indicates that the error $\E_\ell^2$ is reduced by $\sum_{j=1}^\beta \|(P_{\ell+1}-P_\ell) q^j\|^2$ at each step of the AFEM algorithm.  This quantity is directly related to the theoretical indicators $\mu_\ell$ in the estimator reduction inequality \eqref{eq:estred}.  Combining these relationships leads to an error reduction on the order of $\mu_\ell(\T_\ell)$.  On the other hand, $\E_\ell$ is uniformly equivalent to the practical estimator $\eta_\ell$.  Because the theoretical and practical estimators are related by $\|P_\ell^{-1}\|^{-1} \eta_\ell \le \mu_\ell \le \|P_\ell\| \mu_\ell$, reducing $\E_\ell$ by $\mu_\ell(\T_\ell)$ is equivalent to a reduction lying between $\frac{1}{\|P_\ell^{-1}\|} \E_\ell$ and $\|P_\ell\|\E_\ell$. 
\end{remark}

\begin{remark}  
For elliptic source problems, a contraction is obtained from the initial mesh with contraction factor independent of essential quantities \cite{CKNS08}.  On the other hand, for elliptic eigenvalue problems such a contraction result holds only if the initial mesh is sufficiently fine \cite{Gal15, BD15}.  The situation for harmonic forms is intermediate between those encountered in source problems and eigenvalue problems.  No initial mesh fineness assumption is needed to guarantee a contraction, but we only show that the contraction constant is independent of essential quantities on sufficiently fine meshes.   In the case of eigenvalue problems a similar transition state likely exists in which AFEM can be proved to be contractive, but with contraction constant improving as resolution of the target invariant space improves.  
 \end{remark}

\section{Quasi-Optimality}
\label{sec:optimality}

Our proof of quasi-optimality follows a more or less standard outline, simplified somewhat by lack of data oscillation but made more complicated by improvement in the contraction factor as the mesh is refined.

\subsection{Approximation classes}

Given rate $s>0$ and ${\bf r} \in [\H]^\beta$, we let
\begin{align}
\label{eq6-120}
|{\bf r}|_{\A_s} = \sup_{N>0} \inf_{\# \T - \# \T_0 \le N} N^{-s} \left ( \sum_{j=1}^\beta \|r^j-P_\T r^j\|^2 \right ) ^{1/2}.
\end{align} 
$\A_s$ is then the class of all ${\bf r} \in [\H]^\beta$ such that $|{\bf r}|_{\A_s}<\infty$. 

If we applied $\A_s$ to arbitrary ${\bf r} \in [H\Lambda^k(\Omega)]^\beta$, it would be natural to replace the projection $P_\T$ onto $\H_T$ used in \eqref{eq6-120} by the $L_2$ projection onto the full discrete space $V_\T$.  We show in Proposition \ref{lem:equiv} that best approximation over the full finite element space is equivalent up to a constant to best approximation over $\H_\T$ only.  Thus our definition of $\A_s$ makes use of the full approximation strength of the finite element space $V_\T$, even though at first glance it may not seem that this is the case.  
\begin{proposition}
\label{lem:equiv}
Given $\T \in \mathbb{T}$ and $q \in \H^k$, 
\begin{align}
\label{eq6-121}
\|q-P_\T q\| \lesssim \inf_{q_\T \in V_\T} \|q-q_\T\|.
\end{align}
\end{proposition}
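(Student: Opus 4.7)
My plan is to chain together two observations: (i) that the $L_2$ distance from $q$ to $\H_\T$ equals the distance from $q$ to $\Z_\T$ for $q\in\H$, and (ii) that a commuting projection maps $q$ into $\Z_\T$ while also being comparable to the $V_\T$-best approximation.

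First, by \eqref{eq3-4} we have $P_\T q = P_{\Z_\T} q$ whenever $q\in\H$, so
\[
\|q - P_\T q\| \;=\; \|q - P_{\Z_\T} q\| \;=\; \inf_{z_\T \in \Z_\T}\|q - z_\T\|,
\]
since $P_{\Z_\T}$ is the $L_2$ orthogonal projector onto $\Z_\T$. Thus the task reduces to exhibiting one good element of $\Z_\T$ whose distance to $q$ can be controlled by $\inf_{q_\T \in V_\T}\|q - q_\T\|$.

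Second, I will take that element to be $\pi_{CW}q$. Because $q\in\H^k$ satisfies $dq=0$, the commutation property \eqref{CW_interp} gives $d(\pi_{CW}q) = \pi_{CW}(dq) = 0$, so $\pi_{CW}q \in \ker d|_{V_\T} = \Z_\T$. Hence
\[
\|q - P_\T q\| \;\le\; \|q - \pi_{CW} q\|.
\]
Now for arbitrary $q_\T \in V_\T$, using that $\pi_{CW}$ is a projection onto $V_\T$ (so $\pi_{CW} q_\T = q_\T$) together with its $L_2$ stability from \eqref{CW_interp},
\[
\|q - \pi_{CW} q\| \;=\; \|(I - \pi_{CW})(q - q_\T)\| \;\le\; (1 + \|\pi_{CW}\|_{L_2\to L_2})\,\|q - q_\T\| \;\lesssim\; \|q - q_\T\|.
\]
Taking the infimum over $q_\T \in V_\T$ and combining with the previous display yields \eqref{eq6-121}.

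There is no genuine obstacle here; the only subtlety is choosing the right interpolant. The Falk--Winther operator $\pi_{FW}$ is a commuting projection but is only stable in $H\Lambda$, which would force a detour through the regular decomposition. By contrast, $\pi_{CW}$ combines commutativity, idempotency on $V_\T$, and $L_2$ stability in exactly the way needed to make the one-line argument above go through. The only essential ingredient besides \eqref{CW_interp} is the identification $P_\T q = P_{\Z_\T}q$ from \eqref{eq3-4}, which captures the fact that the harmonic part of $q$ is not ``lost'' when one projects onto the larger kernel space $\Z_\T$.
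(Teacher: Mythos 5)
Your proof is correct and arrives at precisely the same chain of inequalities as the paper: first $\|q - P_\T q\| \le \|q - \pi_{CW} q\|$, then $\|q - \pi_{CW}q\| \le (1+C)\|q - q_\T\|$ via idempotency and $L_2$-stability of $\pi_{CW}$. The only difference is in how the first inequality is obtained: the paper simply cites (27) of \cite{AFW10} for $\|q - P_{\H_\T}q\| \le \|(I - \pi_\T)q\|$, whereas you derive it self-containedly from the earlier lemma $P_\T q = P_{\Z_\T}q$ together with the observation that $\pi_{CW}q \in \Z_\T$ by commutativity. Your version is arguably preferable in context, since it reuses machinery already established in the paper rather than importing an external result, but it is not a genuinely different argument.
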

\begin{proof}
From (27) of \cite{AFW10}, we have $\|q-P_{\H_\T} q\| \le \|(I-\pi_\T) q\|$ with $\pi_\T$ a commuting cochain projection.  Thus for $q \in \H$, $q_\T \in V_\T^k$, we may use \eqref{CW_interp} to obtain
\begin{align}
\|q-P_\T q \| \le \|(q-q_\T) -\pi_{CW} (q-q_\T)\| \le (1+C)\|q-q_\T\|,
\end{align}
where $C$ is the $L_2$ stability constant for $\pi_{CW}$.  
\end{proof}

\subsection{Rate optimality}

We first state and prove two lemmas which are more or less standard in this context.  It is important, however, that the constants in these two lemmas are entirely independent of essential quantities.  

\begin{lemma}
\label{lem:dorf}
Let $C_1$ and $C_2$ be the constants from \eqref{eq:practapost} and \eqref{eq6-101}, respectively, and assume that $\theta < \frac{1}{C_1 C_2}$.  Then for $\T_\ell \subset \T$ with 
\begin{align}
\label{eq6-105}
\|{\bf q}-P_\T {\bf q}\| \le [1-\theta C_1 C_2 ] \|{\bf q} - P_\ell {\bf q}\|,
\end{align}
there holds that 
\begin{align}
\label{eq6-106}
\eta_\ell(\hat{R}_{\T_\ell \rightarrow \T}) \ge \theta \eta_\ell(\T_\ell).
\end{align}
\end{lemma}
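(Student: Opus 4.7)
The plan is to follow the standard Dörfler-compatibility argument, combining a Pythagorean identity, the localized upper bound, and the efficiency bound. The only mild subtlety is that I need the orthogonality of Theorem~\ref{lem1} for two arbitrary nested meshes $\T_\ell \subset \T$ (not just adjacent refinements), but the proof of that theorem uses only $\Z_\ell^k \subset \Z_\T^k$ and the identity $P_\T q = P_{\Z_\T} q$ from \eqref{eq3-4}, both of which hold in the general nested case.

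First I would extend the orthogonality identity to obtain, for each $j$,
\begin{align*}
\|q^j - P_\T q^j\|^2 = \|q^j - P_\ell q^j\|^2 - \|(P_\ell - P_\T) q^j\|^2,
\end{align*}
and sum over $j=1,\ldots,\beta$ to get
\begin{align*}
\sum_{j=1}^\beta \|(P_\ell - P_\T) q^j\|^2 = \E_\ell^2 - \E_\T^2.
\end{align*}
Next, squaring the hypothesis \eqref{eq6-105}, writing $\kappa = \theta C_1 C_2 \in (0,1)$, gives $\E_\T^2 \le (1-\kappa)^2 \E_\ell^2$, and therefore
\begin{align*}
\sum_{j=1}^\beta \|(P_\ell - P_\T) q^j\|^2 \;\ge\; \bigl[1-(1-\kappa)^2\bigr] \E_\ell^2 \;=\; \kappa(2-\kappa)\,\E_\ell^2 \;\ge\; \kappa\, \E_\ell^2 \;=\; \theta C_1 C_2\, \E_\ell^2.
\end{align*}

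Then I invoke the (efficiency half of the) a~posteriori bound \eqref{eq:practapost}, namely $\eta_\ell(\T_\ell) \le C_1 \E_\ell$, to replace $\E_\ell^2$ by $\eta_\ell(\T_\ell)^2/C_1^2$, and the localized upper bound \eqref{eq6-101} to control the left-hand side by $C_2^2\, \eta_\ell(\hat{\R}_{\T_\ell \to \T})^2$. Chaining these two estimates yields
\begin{align*}
C_2^2\, \eta_\ell(\hat{\R}_{\T_\ell \to \T})^2 \;\ge\; \theta C_1 C_2 \cdot \frac{\eta_\ell(\T_\ell)^2}{C_1^2} \;=\; \frac{\theta C_2}{C_1}\, \eta_\ell(\T_\ell)^2,
\end{align*}
so that $\eta_\ell(\hat{\R}_{\T_\ell \to \T})^2 \ge \frac{\theta}{C_1 C_2}\, \eta_\ell(\T_\ell)^2$. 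Finally, the threshold condition $\theta \le 1/(C_1 C_2)$ gives $\theta/(C_1 C_2) \ge \theta^2$, whence
\begin{align*}
\eta_\ell(\hat{\R}_{\T_\ell \to \T}) \;\ge\; \sqrt{\theta/(C_1 C_2)}\, \eta_\ell(\T_\ell) \;\ge\; \theta\, \eta_\ell(\T_\ell),
\end{align*}
which is \eqref{eq6-106}. No step is truly hard here; the only place requiring care is verifying that the Hodge-decomposition orthogonality of Theorem~\ref{lem1} extends verbatim to general nested pairs $\T_\ell \subset \T$, and that the constants $C_1$, $C_2$ appearing in the chain are precisely the ones referenced in the hypothesis (so that the smallness condition on $\theta$ matches exactly).
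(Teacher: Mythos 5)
Your proof is correct, and it takes a genuinely (if mildly) different route from the paper's.

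The paper's proof is a one-chain argument: it starts from $\theta C_2\,\eta_\ell(\T_\ell) \le \theta C_1 C_2\,\|{\bf q}-P_\ell{\bf q}\|$, rearranges the hypothesis \eqref{eq6-105} to $\theta C_1 C_2\,\|{\bf q}-P_\ell{\bf q}\| \le \|{\bf q}-P_\ell{\bf q}\| - \|{\bf q}-P_\T{\bf q}\|$, applies the plain triangle inequality $\|{\bf q}-P_\ell{\bf q}\| - \|{\bf q}-P_\T{\bf q}\| \le \|P_\ell{\bf q}-P_\T{\bf q}\|$, and closes with \eqref{eq6-101}. Dividing by $C_2$ gives \eqref{eq6-106} directly, with the constant coming out as exactly $\theta$. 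By contrast, you work with squares, invoking the Pythagorean identity $\sum_j\|(P_\ell-P_\T)q^j\|^2 = \E_\ell^2 - \E_\T^2$ (which you correctly observe requires extending the orthogonality of Theorem~\ref{lem1} to general nested pairs $\T_\ell\subset\T$ --- this extension is indeed immediate since the argument there uses only $\Z_\ell^k\subset\Z_\T^k$ and \eqref{eq3-4}). You then obtain the intermediate bound $\eta_\ell(\hat{\R}_{\T_\ell\to\T}) \ge \sqrt{\theta/(C_1C_2)}\,\eta_\ell(\T_\ell)$, which is actually \emph{sharper} than \eqref{eq6-106} under the threshold $\theta<1/(C_1C_2)$, and then weaken it. What the paper's route buys is economy: the triangle inequality is weaker than orthogonality but suffices, so no extension of Theorem~\ref{lem1} is needed and the constant drops out without a final comparison step. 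What your route buys is the explicit quantitative improvement $\sqrt{\theta/(C_1C_2)}$ over $\theta$, which is the standard form seen in Stevenson-type optimality arguments when orthogonality is available. Both are valid; the paper simply chose not to spend the orthogonality.
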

\begin{proof}
Employing in turn \eqref{eq:practapost}, \eqref{eq6-105}, the triangle inequality, and \eqref{eq6-101} yields
\begin{align}
\begin{aligned}
\theta C_2 \eta_\ell(\T_\ell) & \le \theta C_1 C_2 \|{\bf q} - P_\ell {\bf q}\|
\\ & \le \|{\bf q} - P_\ell { \bf q} \| - \|{\bf q} - P_\T {\bf q}\|
\\ & \le \|P_\ell {\bf q} - P_\T {\bf q} \|
\\ & \le C_2 \eta_\ell (\hat{R}_{\T_\ell \rightarrow \T}).
\end{aligned}
\end{align}
Dividing through by $C_2$ completes the proof.  
\end{proof}
%Note that $C_2+ \kappa_\ell$ may be replaced by $C_2 + \bar{\kappa}$ in all instances in the above statement and proof, yielding a single condition on $\theta$ that is valid from the initial mesh.  However, $\bar{\kappa}$ potentially depends on $\beta={\rm dim}\hspace{1.5pt} \H$, which is not desirable.  We recall that $\kappa_\ell\rightarrow 0$ as $\ell \rightarrow \infty$, yielding an asymptotic condition on $\theta$ that is independent of $\beta$ and other essential quantities.  Below we thus obtain an optimality result that holds beginning with the initial mesh if $\theta$ is sufficiently small depending potentially on $\beta$, and asymptotically if $\theta$ is sufficiently small independent of $\beta$.  

\begin{lemma}
The collection of marked elements $\M_\ell \subset \T_\ell$ defined by the marking strategy \eqref{eq:bulkmarking} satisfies
\begin{align}
\label{eq6-123}
\# \M_\ell \lesssim |{\bf q}|_{\A_s}^{1/s}  \E_\ell^{-1/s}.
\end{align}
%Here $K_{\underline{\ell}}$ may depend on $\beta$, but is uniformly bounded and approaches a constant independent of essential quantities as $\ell \rightarrow \infty$.  
\end{lemma}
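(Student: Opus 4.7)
The plan is to follow the canonical AFEM optimality template: use the approximation class hypothesis to produce a near-optimal mesh, overlay it with $\T_\ell$, invoke Lemma \ref{lem:dorf} to show that the refined set in that overlay realizes a D\"orfler property with parameter $\theta$, and finally exploit the minimality of $\M_\ell$ together with the cardinality bound \eqref{eq6-100}. The hypothesis $\theta < 1/(C_1 C_2)$ is what makes Lemma \ref{lem:dorf} applicable and will need to be assumed (or have already been assumed for the quasi-optimality theorem as a whole).

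First, set $\varepsilon := [1-\theta C_1 C_2]\,\E_\ell$ and choose $N = \lceil (|{\bf q}|_{\A_s}/\varepsilon)^{1/s} \rceil$. By the definition \eqref{eq6-120} of the approximation class, there exists $\T_\star \in \mathbb{T}$ with $\#\T_\star - \#\T_0 \le N$ and $\bigl(\sum_{j=1}^{\beta} \|q^j - P_{\T_\star} q^j\|^2\bigr)^{1/2} \le |{\bf q}|_{\A_s} N^{-s} \le \varepsilon$. Let $\T$ be the smallest common refinement (overlay) of $\T_\ell$ and $\T_\star$. By a standard overlay estimate (see e.g. \cite{Ste08,CKNS08}), $\#\T - \#\T_\ell \le \#\T_\star - \#\T_0 \le N$. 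Moreover, since $\T_\star \subset \T$, Proposition \ref{prop3} applied coordinatewise gives
\begin{align*}
\Bigl(\sum_{j=1}^\beta \|q^j - P_\T q^j\|^2\Bigr)^{1/2} \le \Bigl(\sum_{j=1}^\beta \|q^j - P_{\T_\star} q^j\|^2\Bigr)^{1/2} \le \varepsilon = [1-\theta C_1 C_2]\,\E_\ell,
\end{align*}
which is precisely hypothesis \eqref{eq6-105} of Lemma \ref{lem:dorf}.

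Consequently, Lemma \ref{lem:dorf} yields $\eta_\ell(\hat{\R}_{\T_\ell \to \T}) \ge \theta\,\eta_\ell(\T_\ell)$, so $\hat{\R}_{\T_\ell \to \T}$ is an admissible marked set for the bulk marking criterion \eqref{eq:bulkmarking}. Since $\M_\ell$ is chosen with \emph{minimal} cardinality among all such sets, and using \eqref{eq6-100} together with $\# \R_{\T_\ell \to \T} \le \# \T - \# \T_\ell$, we obtain
\begin{align*}
\# \M_\ell \le \# \hat{\R}_{\T_\ell \to \T} \lesssim \# \R_{\T_\ell \to \T} \le \# \T - \# \T_\ell \le N \lesssim |{\bf q}|_{\A_s}^{1/s}\, \varepsilon^{-1/s} \simeq |{\bf q}|_{\A_s}^{1/s}\, \E_\ell^{-1/s},
\end{align*}
where the final constant absorbs the fixed factor $[1-\theta C_1 C_2]^{-1/s}$.

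The main obstacle is not really a sharp estimate but rather confirming that all the constants involved are independent of essential quantities (in particular, independent of $\|P_\ell^{-1}\|$ and of $\beta$). This relies crucially on the fact that the marking is done with the \emph{practical} estimator $\eta_\ell$, that Lemma \ref{lem:dorf} is formulated in terms of $\eta_\ell$ (not $\mu_\ell$), and that both $C_1$ from \eqref{eq:practapost} and $C_2$ from \eqref{eq6-101} are already known to be essentially non-essential. The required smallness condition on $\theta$, namely $\theta < 1/(C_1 C_2)$, is thus intrinsic to the problem and in particular independent of the initial mesh, matching the behavior asserted in the introduction.
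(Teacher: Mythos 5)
Your proof is correct and follows essentially the same route as the paper: use the approximation class to produce a near-optimal mesh $\T_\star$, overlay it with $\T_\ell$, invoke Lemma \ref{lem:dorf} to show the refined set satisfies the D\"orfler criterion, and conclude by minimality of $\M_\ell$ together with \eqref{eq6-100}. The only cosmetic difference is that you invoke Proposition \ref{prop3} (monotonicity of the projection) to pass from $\T_\star$ to the overlay $\T$, while the paper instead appeals to $V_{\T_\star} \subset V_\T$ and Proposition \ref{lem:equiv}; your route is slightly cleaner since it introduces no extra constant.
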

\begin{proof}
By definition of ${\mathcal A}^s$ there exists a partition $\T' \in \mathbb{T}$ such that
\begin{equation} \label{6-124}
\# \T'-\# \T_0 \lesssim |{\bf q}|_{\A_s}^{1/s}\left [ ( 1-\theta C_1C_2) \E_\ell \right]^{-1/s}
\end{equation}
and
$$
\|{\bf q} - P_{\T'} {\bf q}\| \leq (1-\theta C_1 C_2) \E_\ell.
$$
The smallest common refinement $\T$ of $\T_\ell$ and $\T'$ is in $\mathbb{T}$ with $\# \T -\#\T_\ell \leq \# \T'-\#\T_0$ (cf. \cite{Ste07} last lines of the proof of Lemma 5.2).
Since $V_{\T'} \subset V_\T$, \eqref{eq6-121} and the last equation above yield
\begin{align*}
\|{\bf q}-P_{V_\T} {\bf q} \| \leq \|{\bf q} - P_{V_{\T'}} {\bf q}\|  \leq (1-\theta C_1 C_2)\E_\ell.  
\end{align*}
Thus $\eta(\hat{\R}_{\T_\ell \rightarrow \T}) \geq \theta \eta_\ell$ by Lemma \ref{lem:dorf}. Since $\M_\ell$ is the {\em smallest} subset of $\T_\ell$ with $\eta(\M_{\underline{\ell}}) \geq \theta \eta_\ell$, we conclude that
$$
\# \M_\ell \leq \# \hat{\R}_{\T_\ell \rightarrow \T} \lesssim \# \T -\#\T_\ell \lesssim \# \T' -\# \T_0.
$$
%The proof of \eqref{eq6-123} with a uniform constant (independent of $\ell$) then follows from \eqref{6-124}.  The remaining statements may be obtained by substituting $\kappa_\ell$ for $\bar{\kappa}$ in the proof above and recalling that $\kappa_\ell \rightarrow 0$ as $\ell \rightarrow \infty$.  
\end{proof}

We finally state our optimality result.
\begin{theorem}
\label{th:optimality}
Assume as in Lemma \ref{lem:dorf} that $\theta<\frac{1}{C_1 C_2}$, and assume that ${\bf q} \in \A_s$.  Given $\underline{\ell} \ge 0$, there exists a constant $C_{\underline{\ell}}$ depending on $\|P_{\underline{\ell}}^{-1}\|$ and the constant $C_{ref,\underline{\ell}}$ from \eqref{numrefined} but independent of other essential quantities such that
\begin{align}
\label{optimality}
\E_\ell \le C_{\underline{\ell}} (\# \T_\ell -\# \T_{\underline{\ell}})^{-1/s} |{\bf q}|_{\A_s}, ~~\ell \ge \underline{\ell}.
\end{align}
%Here $C_{\underline{\ell}} \rightarrow C$ with $C$ independent of essential quantities as $\underline{\ell} \rightarrow \infty$.  
\end{theorem}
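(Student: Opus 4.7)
The plan is to follow the CKNS-style optimality argument \cite{CKNS08}, but with all accounting anchored at the level $\underline{\ell}$ rather than at $0$, since both the contraction factor $\rho_{\underline{\ell}}$ from Theorem \ref{theorem:contraction} and the refinement complexity constant $C_{ref,\underline{\ell}}$ from \eqref{numrefined} depend on the starting level. All of the required ingredients are already in place: the marking cardinality bound \eqref{eq6-123}, the refinement complexity bound \eqref{numrefined}, the equivalence $\mu_i(\T_i) \lesssim \eta_i(\T_i) \lesssim \E_i$ that follows from Theorem \ref{th:equivalence} together with \eqref{eq:practapost}, and the contraction estimate \eqref{eq:contraction}. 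What remains is to assemble them in the right order.

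First I would chain \eqref{numrefined} with \eqref{eq6-123} to obtain
\begin{align*}
\# \T_\ell - \# \T_{\underline{\ell}} \le C_{ref,\underline{\ell}} \sum_{i=\underline{\ell}}^{\ell-1} \# \M_i \lesssim C_{ref,\underline{\ell}}\, |{\bf q}|_{\A_s}^{1/s} \sum_{i=\underline{\ell}}^{\ell-1} \E_i^{-1/s}.
\end{align*}
To control the remaining sum, I would iterate the contraction \eqref{eq:contraction} from level $i$ up to level $\ell$, yielding
\begin{align*}
\E_\ell^2 \le \E_\ell^2 + \gamma_{\underline{\ell}} \mu_\ell(\T_\ell)^2 \le \rho_{\underline{\ell}}^{\ell-i} \bigl(\E_i^2 + \gamma_{\underline{\ell}} \mu_i(\T_i)^2\bigr),
\end{align*}
and then use $\mu_i(\T_i)^2 \lesssim \E_i^2$ on the right-hand side to reduce this to $\E_\ell^2 \lesssim \rho_{\underline{\ell}}^{\ell-i} \E_i^2$ with a constant depending only on $\underline{\ell}$. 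Inverting, raising to the power $1/(2s)$, and summing the resulting geometric series with ratio $\rho_{\underline{\ell}}^{1/(2s)} < 1$ then produces $\sum_{i=\underline{\ell}}^{\ell-1} \E_i^{-1/s} \lesssim \E_\ell^{-1/s}$. Substituting this back into the cardinality bound and rearranging to isolate $\E_\ell$ gives \eqref{optimality}.

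The principal obstacle, and the reason the constant $C_{\underline{\ell}}$ in \eqref{optimality} must depend on $\underline{\ell}$, is that $\rho_{\underline{\ell}}$ is quantitatively controlled only through $\|P_{\underline{\ell}}^{-1}\|$, which in turn is only known to approach $1$ as the overlap between $\H^k$ and $\H_\ell^k$ improves. On a coarse initial mesh $\rho_0$ could be arbitrarily close to $1$, spoiling geometric summability; hence the accounting must be anchored at a level $\underline{\ell}$ on which both $\rho_{\underline{\ell}}$ and $C_{ref,\underline{\ell}}$ are controlled. This is the one structural point where the harmonic-form setting differs from the classical elliptic source problem in \cite{CKNS08}, where $\underline{\ell}=0$ is always an admissible choice; otherwise the argument is structurally identical to the standard template.
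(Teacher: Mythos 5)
Your proposal is correct and follows essentially the same route as the paper's proof: both chain the marking cardinality bound \eqref{eq6-123} with \eqref{numrefined}, both iterate the contraction \eqref{eq:contraction} anchored at $\underline{\ell}$ together with the equivalence $\mu_k(\T_k)\lesssim \E_k$ (from Theorem \ref{th:equivalence} and \eqref{eq:practapost}) to sum the geometric series $\sum_{k}\E_k^{-1/s}\lesssim(1-\rho_{\underline{\ell}}^{1/2s})^{-1}\E_\ell^{-1/s}$, and both rearrange to isolate $\E_\ell$. The only difference is cosmetic — you apply the $\mu\lesssim\E$ equivalence after iterating the contraction rather than before, but the resulting estimate and constant are the same.
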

\begin{proof} We first compute using Theorem \ref{th:equivalence}, \eqref{eq:practapost}, and the fact that $\gamma_{\underline{\ell}} \le C_2^{-1}$ that for $k \ge \underline{\ell}$, 
\begin{align}
\label{eq6-150}
\E_k^2 + \gamma_{\underline{\ell}} \mu_k(\T_k)^2 \le \E_k^2 + \gamma_{\underline{\ell}} \eta_k(\T_k)^2 \lesssim (1+\gamma_{\underline{\ell}}) \E_k^2 \lesssim \E_k^2.
\end{align}
Thus
\begin{align}
\label{eq6-151}
\E_k^{-1/s} \lesssim (\E_k^2 + \gamma_{\underline{\ell}} \mu_k(\T_k)^2)^{-1/2s}.
\end{align}
We then use \eqref{eq:contraction} to obtain
\begin{align}
\label{eq6-152}
\E_k^{-1/s} \lesssim \rho_{\underline{\ell}}^{(\ell-k)/2s}  (\E_{\ell}^2+\gamma_{\underline{\ell}} \mu_{\ell}(\T_{\underline{\ell}})^2)^{-1/2s}.
\end{align}
From \eqref{numrefined}, it then follows that 
\begin{align}
\begin{aligned}
\# \T_\ell -\# \T_{\underline{\ell}} & = \sum_{k=\underline{\ell}}^{\ell-1} \# \R_{\T_{k} \rightarrow \T_{k+1}}
\le C_{ref, \underline{\ell}} \sum_{k=\underline{\ell}}^{\ell-1} \# \M_k 
 \lesssim C_{ref, \underline{\ell}} |{\bf q}|_{\A_s}^{1/s}  \sum_{k=\underline{\ell}}^{\ell-1} \E_k^{-1/s}
\\ &  \le C_{ref, \underline{\ell}}  |{\bf q}|_{\A_s}^{1/s} \sum_{k=\underline{\ell}}^{\ell-1} \rho_{\underline{\ell}}^{(\ell-k)/2s} (\E_{\ell}^2+\gamma_{\underline{\ell}} \mu_{\ell}(\T_{\underline{\ell}})^2)^{-1/2s}
\\ & \le C_{ref, \underline{\ell}}  \left ( 1-\rho_{\underline{\ell}}^{1/2s} \right )^{-1}  |{\bf q}|_{\A_s}^{1/s}  ( \E_{\ell}^2+\gamma_{\underline{\ell}} \mu_{\ell}(\T_{\underline{\ell}})^2)^{-1/2s}.
\end{aligned}
\end{align}
Setting  $C_{\underline{\ell}}:=C C_{ref, \underline{\ell}} \left ( 1-\rho_{\underline{\ell}}^{1/2s} \right )^{-1}$ and rearranging the above expression completes the proof.  
\end{proof}

\begin{remark}  As is standard in AFEM optimality results, $\theta$ is required to be sufficiently small in order to ensure optimality.  $\theta$ must however only be sufficiently small with respect to $\frac{1}{C_1 C_2}$, which is entirely independent of the dimension $\beta$ of $\H$, $\|P_\ell^{-1}\|$, and other essential quantities.  In contrast to elliptic eigenvalue problems \cite{Gal15, BD15}, we do not require an initial fineness assumption on $\T_0$ in order to guarantee that the threshold value for $\theta$ is independent of essential quantities.  

The constant $C_{\underline{\ell}}$ does however depend on $\|P_{\underline{\ell}}^{-1}\|$, and it is not clear that this constant will improve (decrease) as the mesh $\underline{\ell}$ increases even though $\|P_{\underline{\ell}}^{-1}\| \rightarrow 1$.  The factor $(1-\rho_{\underline{\ell}}^{1/2s})^{-1}$ is nonincreasing, but the factor $C_{ref, \underline{\ell}}$ arising from \eqref{numrefined} is not uniformly bounded in $\underline{\ell}$.  According to \cite[Theorem 6.1]{Ste08} it may in essence depend on the degree of quasi-uniformity of the mesh $\T_{\underline{\ell}}$ and thus may degenerate as the mesh is refined.  In order to guarantee a uniform constant, we apply Theorem \ref{th:optimality} with $\underline{\ell}=0$ and obtain a constant $C_{\underline{0}}$ which depends on $\|P_0^{-1}\|$ in addition to $\T_0$.

\end{remark}

\section{Extensions}
\label{sec:extensions}

In this section we briefly discuss possible extensions of our work.

\subsection{Essential boundary conditions.}  Many of our results extend directly to the case of essential boundary conditions in which the requirement $\trace \star q=0$ in \eqref{eq2-1} is replaced by $\trace q =0$, with the latter condition imposed directly on the finite element spaces.  The major hurdle in obtaining an immediate extension is the availability of quasi-interpolants which possess the necessary properties.  The Christiansen-Winther interpolant in Section \ref{sec:interpolants} has been defined and analyzed also for essential boundary conditions, while the Falk-Winther interpolant was fully analyzed in \cite{FW14} only for natural boundary conditions.  We used the properties of the Falk-Winther interpolant only to obtain the localized a posteriori upper bound (Lemma \ref{lem:locapost}), which is necessary to obtain a quasi-optimality result but not a contraction.  The contraction result given in Theorem \ref{theorem:contraction} thus extends immediately to the case of essential boundary conditions.  There is indication given in \cite{FW14} that the properties of the Falk-Winther interplant transfer naturally to essential boundary conditions, in particular by simply setting boundary degrees of freedom to 0.  Assuming this extension our quasi-optimality results also hold for homogeneous essential boundary conditions.  A suitable interpolant is also defined and analyzed in the paper \cite{ZCSWX11} for the practically important case $d=3$, $k=1$.  

\subsection{Harmonic forms with coefficients}  In electromagnetic applications the magnetic permeability $A$ is a symmetric, uniformly positive definite matrix having entries in $L_\infty(\Omega)$.  If $A$ is nonconstant, then the space
\begin{align}
\label{eq700}
\H_A(\Omega) =\{ {\bf v} \in L_2(\Omega)^3 : \curl {\bf v} = 0, ~ \Div A {\bf v} = 0, ~ A{\bf v} \cdot {\bf n} = 0 \hbox{ on } \partial \Omega \}
\end{align}
is the natural space of harmonic forms, but differs from that in \eqref{eq0-1}.  It is similarly possible to modify the more general definition \eqref{eq2-1} to include coefficients.  As is pointed out in \cite[Section 6.1]{AFW10}, the finite element exterior calculus framework applies essentially verbatim to this situation once the inner products used in all of the relevant definitions are modified to include coefficients.  Our a posteriori estimates and the contraction result of Section \ref{sec:contraction} similarly apply with minimal modification.  Extension of the optimality results of Section \ref{sec:optimality} is possible but complicated by the presence of oscillation of the coefficient $A$ in the analysis; cf. \cite{BD15, CKNS08, DHZ15}.  

\subsection{Approximation of cohomologous forms}  \label{sec7-3}  In some applications it is of interest to compute $P_\H f$ for a given (non-harmonic) form $f$.  This is for example the case in the finite element exterior calculus framework for solving problems of Hodge-Laplace type.  There $f$ is the right-hand-side data.  Because the Hodge-Laplace problem is only solvable for data orthogonal to $\H$, it is necessary to compute $P_\H f$ and solve the resulting system with data $f-P_\H f$; cf. \cite{HiKaWaWa11} for other applications. A possible adaptive approach to this problem is to adaptively reduce the defect between $\H$ and $\H_\ell$ as we do above and then project $f$ onto $\H_\ell$.  However, this approach requires computation of a multidimensional space, while the original problem only requires computation of a one-dimensional space (that spanned by $P_\H f$).  An alternate method would be to approximate the full Hodge decomposition.   More precisely, one could compute $P_{\B} f$ and $P_{\Z^\perp} f$ by solving two constrained elliptic problems, but expense is an obvious disadvantage of this method also.  

It may be desirable to instead adaptively compute $P_\H f$ directly.  It might for example be the case that some members of $\H_\ell$ have singularities which are not shared by $P_\H f$ (such situations arise in eigenfunction computations).  The task of constructing an AFEM for approximating only $P_\H f$ appears difficult, however.  Assume for the sake of argument the extreme case where $P_\H f \neq 0$, but $f \perp \H_\ell$.  The indirect approach of first controlling the defect between $\H$ and $\H_\ell$ and then projecting $f$ would continue to function with no problems in this case.  On the other hand, it is not clear how to directly construct an a posteriori estimate for $\|P_\H f - P_\ell f\|$ which would be nonzero when $\H_\ell \perp f$.  In particular, it is not difficult to construct an estimator for $\|P_\ell f-P_\H P_\ell f\|$ (cf. \cite{DH14}), but such an estimator would be 0 and thus not reliable in this case.  

\subsection{Alternate methods for computing harmonic forms}  \label{subsec:cutting}  We have largely ignored the actual method for obtaining $\H_\ell$ by simply assuming that we in some fashion produced an orthonormal basis for this space.  This viewpoint is consistent with the finite element exterior calculus framework that we have largely followed.  It also fits well with eigenvalue or SVD-based methods for computing $\H_\ell$, which are general with respect to space dimension and form degree and which may be easily implemented using standard linear algebra libraries.  Discussion of methods for producing such a basis consistent with the FEEC framework may be found in \cite{HiKaWaWa11}.  However, the process of producing $\H_\ell$ is in and of itself not entirely straightforward, and the method for producing it may affect the structure and properties of the resulting adaptive algorithm.  Different methods with potentially advantageous properties have been explored especially in three space dimensions \cite{DST09, RBGV13}.  The method of cutting surfaces is such an example.

Let $\Omega \subset \mathbb{R}^3$, and assume that there exist $\beta$ regular and nonintersecting cuts (two-dimensional hypersurfaces) $\sigma_j$ such that $\Omega_0=\Omega \setminus \cup_{j=1}^\beta \sigma_j$ is simply connected.  The assumption that such cuts exist is nontrivial with respect to domain topology and excludes for example the complement of a trefoil knot in a box \cite{BFG12}.  The methodology we discuss here thus does not apply in all situations where our theory above applies.  A domain for which such a set of cutting surfaces exist is called a Helmholtz domain.  Determination of cutting surfaces on finite element meshes is also a nontrivial and possibly computationally expensive problem \cite{DST09}, although in an adaptive setting one could potentially compute the cutting surfaces at low cost on a coarse mesh and transfer them to subsequent refinements.  

Assuming that $\Omega$ is Helmholtz, let $\varphi^j$ solve
\begin{align}
\begin{aligned}
-\Delta \varphi^j& =0 \hbox{ in } \Omega_0, ~~ \nabla \varphi^j \cdot {\bf n}=0 \hbox{ on } \partial \Omega,
\\ & \llbracket \varphi^j \rrbracket_{\sigma_i}= \delta_{ij} \hbox{ and }  \llbracket \nabla \varphi_j \cdot {\bf n}_i\rrbracket_{\sigma_i}=0, ~~1 \le i \le \beta.
\end{aligned}
\end{align}
The set $\{\nabla \varphi_j\}_{j=1}^\beta$ then serves as a basis for the space $\H^1$ of vector fields defined in \eqref{eq0-1}.  That is, the harmonic fields may be reclaimed from potentials consisting of $H^1$ functions.  

Next assume that the cuts $\sigma_j$ each consist of unions of faces in a simplicial mesh $\T$.  Let now $V^0$ be piecewise linear Lagrange elements on $\Omega$, and let $V_j^0$ be the set of functions which are continuous and piecewise linear in $\Omega \setminus \sigma_j$ and which satisfy $\llbracket v_h \rrbracket _{\sigma_j} =1$.  The canonical finite element approximation to $\varphi^j$ is to find $\varphi_\T^j \in V_j^0$ such that $\int_{\Omega \setminus \sigma_j} \nabla \varphi_\T^j \nabla v=0$ for all $v \in V^0$.  $\varphi_\T^j$ is only unique up to a constant, but since we are only interested in $\nabla \varphi_\T^j$ this has little effect on our discussion.  One can also verify that $\nabla \varphi_\T^j$ is a discrete harmonic field lying in the lowest-degree N\'ed\'elec edge space.  Thus as in the continuous case, the discrete harmonic fields can be recovered from potentials.  The same procedure may be applied with other complex-conforming pairs of spaces.  An analogous procedure also exists for essential boundary conditions.  

An obvious adaptive procedure for approximating $\H^1$ is to adaptively compute $\varphi_\T^j$, $j=1,...,J$, using standard AFEM for scalar Laplace problems.  Convergence and optimality follows from standard results such as those found in \cite{CKNS08} with slight modification to account for boundary conditions.    The basis $\{\nabla \varphi^j\}_{j=1}^\beta$ that we thus approximate is not orthonormal, but has the advantage of being fixed using criterion that are passed on to the discrete approximations.  Also note that here the approximation map $\varphi^j \rightarrow \varphi_\T^j$ is clearly linear.  If we however orthonormalized the vectors $\{\nabla \varphi_\T^j\}$ in order to produce $\{q_\T^j\}$ as in our previous assumptions, then the approximation $\{\nabla \varphi^j\} \rightarrow \{q_\T^j\}$ would be nonlinear.  This discussion indicates that while producing an orthonormal basis of forms is a nonlinear procedure, the nonlinearity is mild and not intrinsic to the task of finding {\it some} basis for the space of harmonic forms.

\section{Numerical experiments}
\label{sec:numerics}  

In this section we give brief numerical experiments which illustrate our theoretical results.  We employed the MATLAB-based finite element toolbox iFEM \cite{Ch09PP}.  Harmonic fields for two-dimensional domains were computed by first constructing the full system matrix for the Hodge (vector) Laplacian and then finding its kernel using the MATLAB \textsf{svds} command.  This approach may be relatively inefficient from a computational standpoint, but allowed accurate computation of the discrete harmonic basis using off-the-shelf components.  A more sophisticated algorithm for computing discrete harmonic fields, motivated by algebraic topology but only formulated for lowest-degree (Whitney) finite element forms, is available in \cite{RBGV13}.  

We adaptively computed the space $\H^1$ of harmonic forms on domains $\Omega \subset \mathbb{R}^2$.   The de Rham complex in two space dimensions may be realized as
\begin{align}
\label{eq800}
H^1(\Omega) \overset{\nabla}{\rightarrow} H(\rot;\Omega) \overset{\rot}{\rightarrow} L_2(\Omega),
\end{align}
where the rotation operator is given by $\rot {\bf v} = \partial_y v_1-\partial_x v_2$.  The adjoint of $d=\rot$ is the two-dimensional curl operator $\delta  = \curl \varphi=(-\partial_y, \partial_x)$.  The corresponding space $\H^1$ of harmonic forms consists of rotation- and divergence-free vector fields with vanishing normal component ${\bf v} \cdot \vec{n}$ on $\partial \Omega$.  

Our computations were performed on (non-simply connected) polygonal domains.  We briefly recall some standard facts about singularities of solutions to elliptic PDE on polygonal domains.  Assume that $-\Delta u =f$ with homogenous Neumann boundary conditions, that is, assume that $u$ solves a 0-Hodge Laplace problem for the complex \eqref{eq800}.  At a given vertex $v_i \in \partial \Omega$ with interior opening angle $\omega_i$, there generally holds $u(x) \sim r_i(x)^{\pi/\omega_i}$ with $r_i={\rm dist}(x,v_i)$.  Solutions to the 1-Hodge Laplace problem $(d \delta + \delta d) u =(-\nabla \Div + \curl \rot) u =f$ may generically be expected to have singularities one exponent stronger, that is, of the form $r_i^{\pi/\omega_i-1}$; cf. \cite{dauge_web, CD00} for related discussion of Maxwell's equations in three space dimensions.  

Because harmonic forms satisfy $(-\nabla \Div + \curl \rot) u =0$, one would expect a singularity structure similar to that for the corresponding Hodge Laplace problem.  For $q^j \in H^1$, we thus expect
\begin{align}
\label{eq801}
q^j \sim r_i^{\pi/\omega_i-1}
\end{align}
Another way to see this is to consider the Hodge decomposition $\vec{v}=\nabla \varphi + q + \vec{z}$  of a smooth vector field $\vec{v}$.  $\nabla \varphi$ solves $-\Delta \varphi=\Div \vec{v}$, so we generally expect $\nabla \varphi \sim r_i^{\pi/\omega_i-1}$ near $v_i$.  Because $\vec{v}$ is smooth, $q \in \H^1$ and $z \in \Z^{\perp}$ may be expected to have offsetting singularities.  As a final note, if $\omega_i> \pi$ (that is, if the opening at $v_i$ is nonconvex), then the exponent of $r_i$ in \eqref{eq801} is negative and thus we expect $q^j$ to be unbounded near $r_i$.   

In all of our experiments below we compute harmonic forms on polygonal domains in $\mathbb{R}^2$ using rotated Raviart-Thomas elements of lowest degree.  These elements give an a priori convergence rate of order $O(h)$ for $\|q-P_\ell q\|_{L_2(\Omega)}$ assuming sufficiently smooth $q$.  When $q$ has vertex singularities such as those described above, AFEM is able to recover a convergence rate of $(\# \T_\ell -\# \T_0)^{1/2}$, just as for standard piecewise linear Lagrange AFEM for computing gradients of solutions to $-\Delta u =f$.  

\subsection{Experiment 1:  $\beta=1$.}  Our goals in this experiment are to demonstrate improved convergence rates for AFEM vs. quasi-uniform mesh refinement and also to confirm that harmonic forms blow up at reentrant corners, as predicted by \eqref{eq801}.  Here $\Omega$ is a simple square annulus with reentrant corners having opening angle $\omega_j = 3 \pi/2$, so we expect $q^1 \sim r^{-1/3}$ near reentrant corners.  We correspondingly expect an a priori convergence rate of order $h^{2/3-\epsilon} \sim DOF^{-1/3+\epsilon}$ on sequences of quasi-uniform meshes, and an adaptive convergence rate of order $DOF^{-1/2}$.  These are in fact observed in the left plot of Figure \ref{fig1}.  In the right plot of Figure 1 we show the increase in $\|q_\ell^1\|_{L_\infty(\Omega)}$ as the mesh as refined, which provides confirmation that $q^1$ is singular as expected.  In Figure \ref{fig2} we display an adaptively-generated mesh showing refinement at the reentrant corners along with a representative harmonic form $q_\ell^1$ on a coarse mesh.  

 \setlength{\unitlength}{.75cm}
\begin{figure}[h]
\centering
%\hspace{-1cm}
\includegraphics[scale=.36]{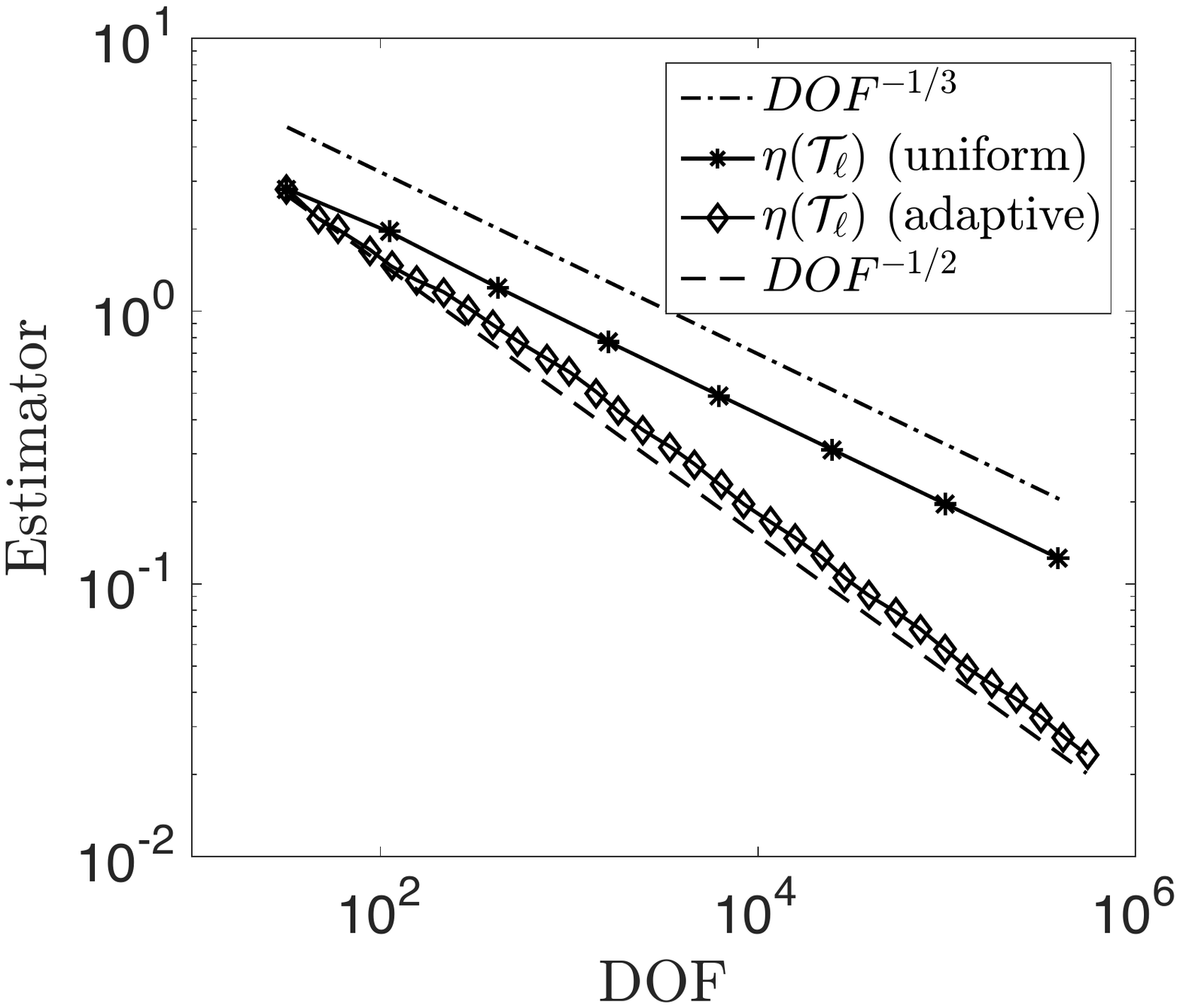}
\includegraphics[scale=.36]{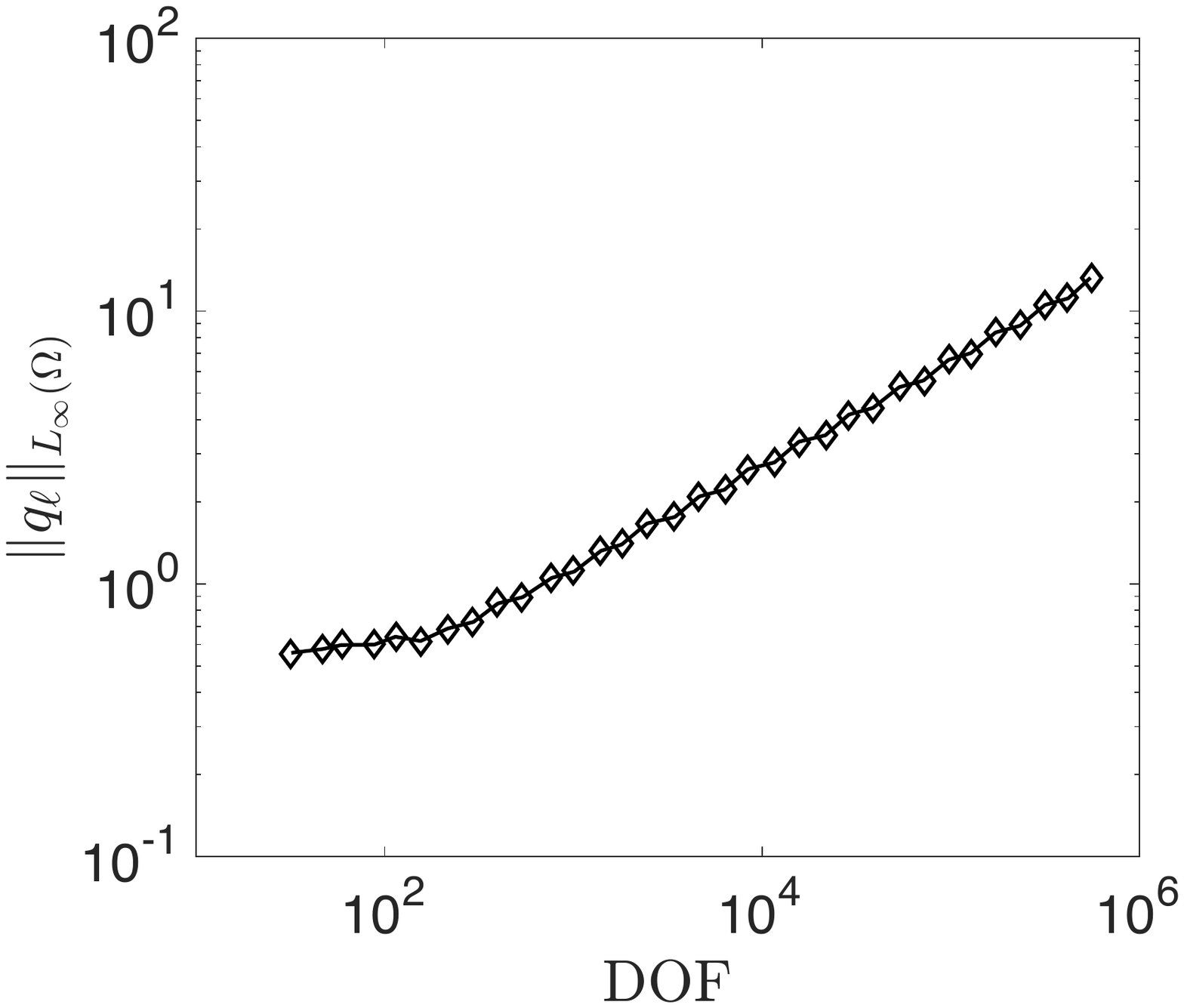}
\caption{Error decrease under adaptive and uniform refinement (left); increase in $\|q_\ell^1\|_{L_\infty(\Omega)}$ under adaptive refinement (right).}
\label{fig1}
\end{figure}

 \setlength{\unitlength}{.75cm}
\begin{figure}[h]
\centering
%\hspace{-1cm}
\includegraphics[scale=.5]{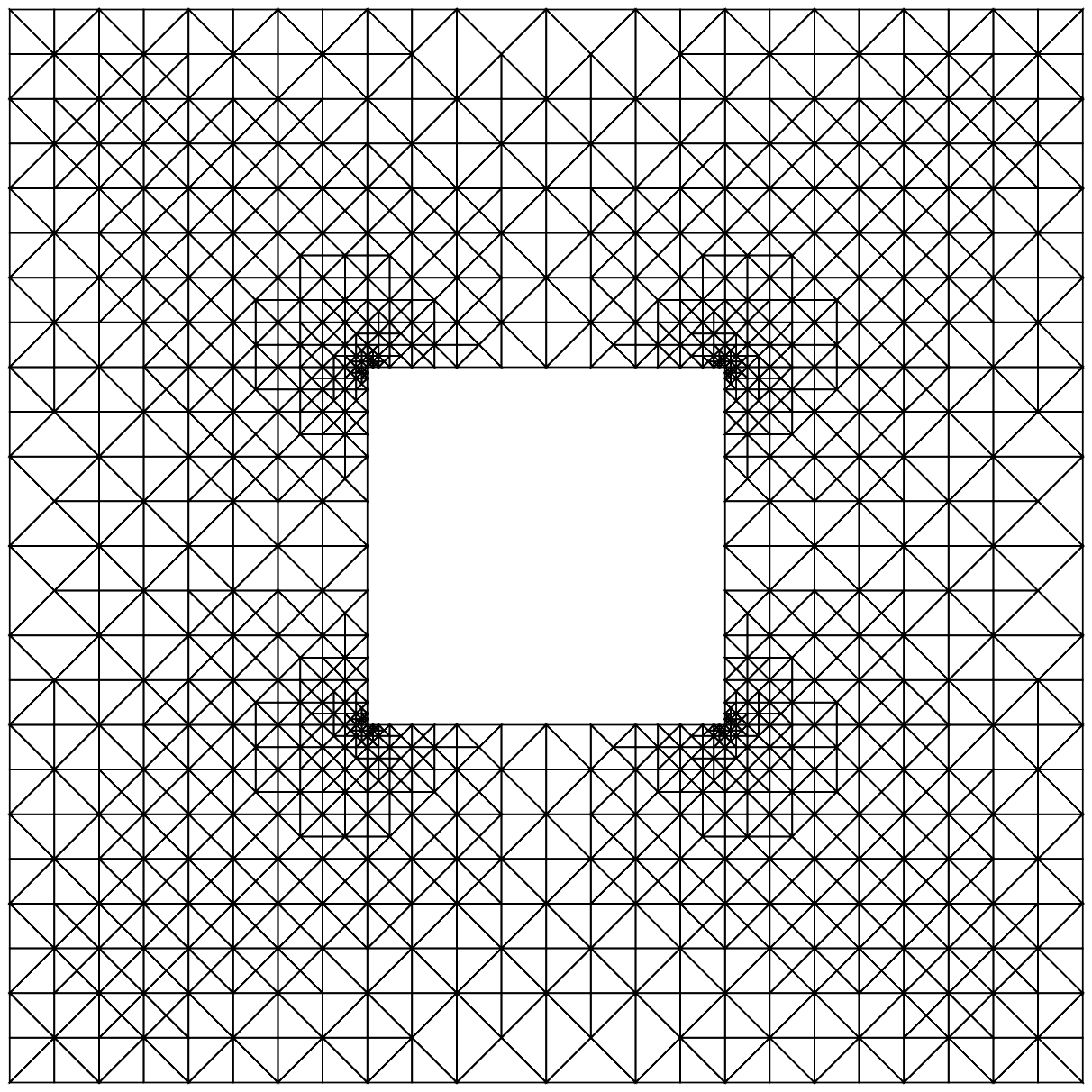}
\includegraphics[scale=.5]{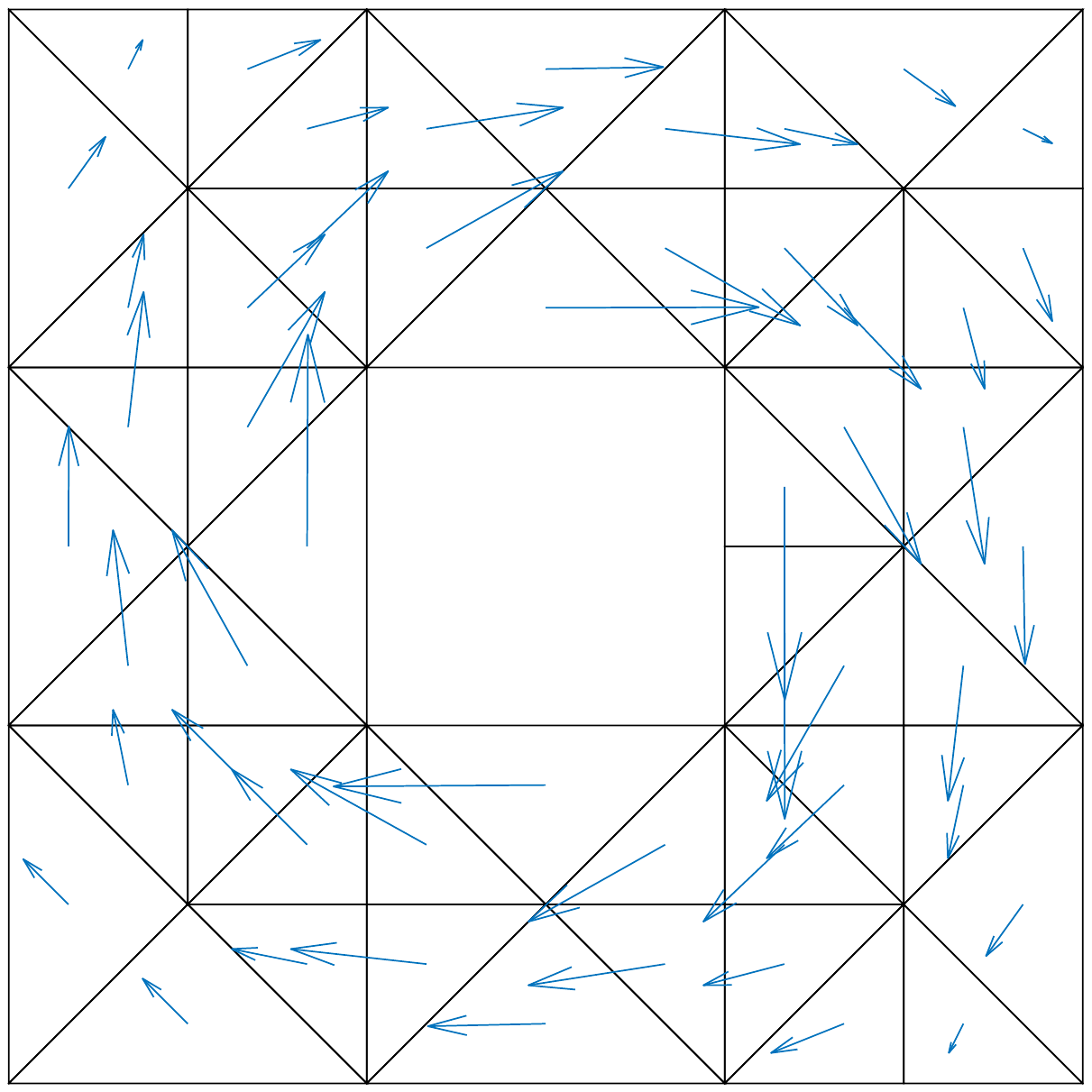}
\caption{Adaptive mesh with 2535 degrees of freedom (left); discrete harmonic form on a coarse mesh (right).}
\label{fig2}
\end{figure}

\subsection{Experiment 2: $\beta=3$.}  

In this experiment we investigate the case $\beta>1$.  In Figure \ref{fig3}, an adaptively computed basis is displayed on a relatively coarse mesh $\T_\ell$ on a domain with $\beta=3$, while Figure \ref{fig4} displays the computed discrete harmonic basis on a finer mesh $\T_{\tilde{\ell}}$ ($\tilde{\ell} > \ell$).  Comparing the two bases provides an illustration of the alignment problem discussed in the introduction.  There is no correspondence between $q_\ell^i$ and $q_{\tilde{\ell}}^i$.  Comparing two such forms in an estimator reduction inequality is not meaningful, as in the case of elliptic eigenvalue problems.  Our second comment concerns the discussion in Section \ref{sec7-3}.  There we noted that forms in a given cohomology class may not be singular at all reentrant corners.  This is illustrated by for example $q_\ell^1$ in the upper left of Figure \ref{fig3}.  The support of $q_\ell^1$ is mainly localized to the upper right quarter of $\Omega$.  Here this localization occurred somewhat randomly, but can also be manufactured by design (cf. \cite{HiKaWaWa11}).

 \setlength{\unitlength}{.75cm}
\begin{figure}[h]
\centering
%\hspace{-1cm}
\includegraphics[scale=.5]{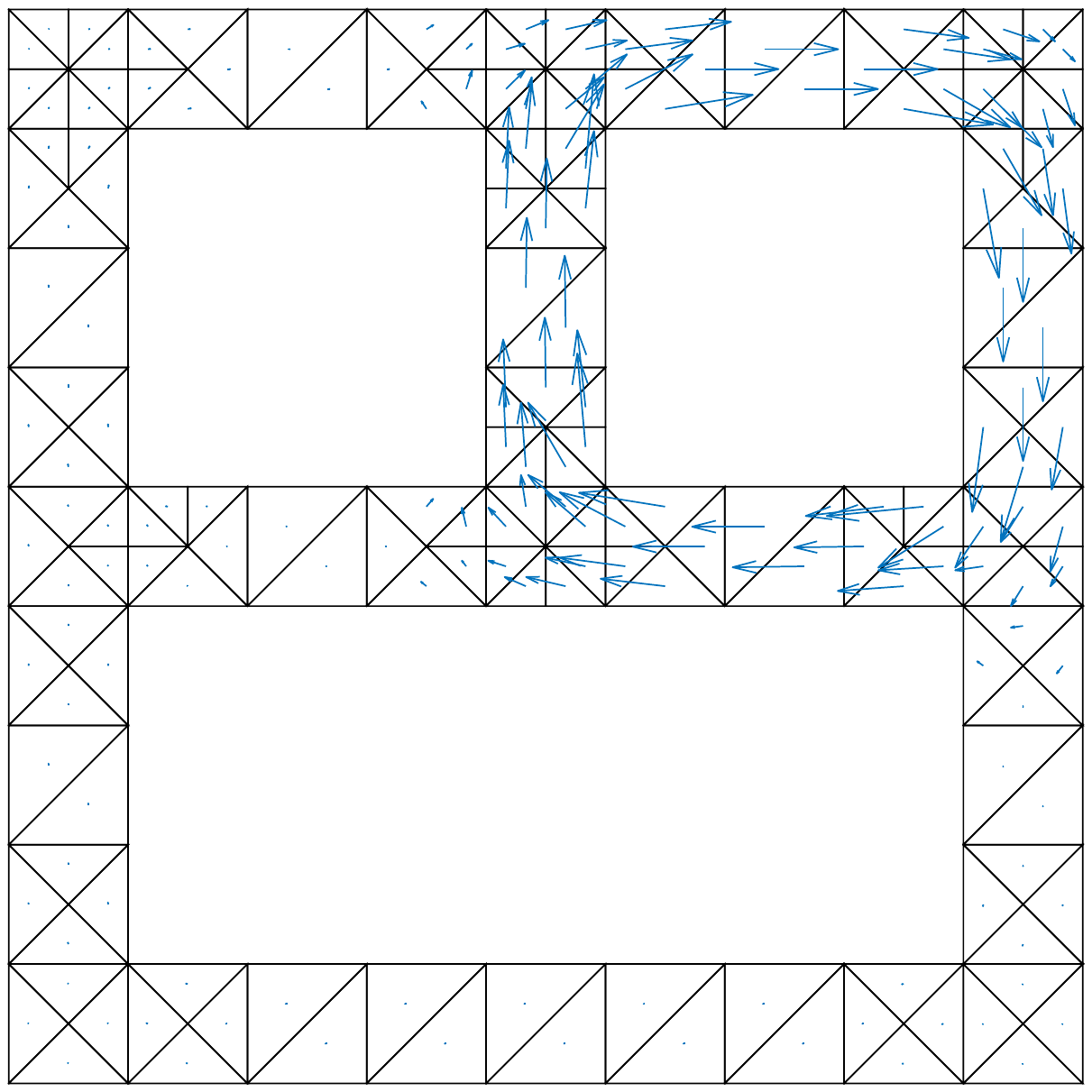}
\includegraphics[scale=.5]{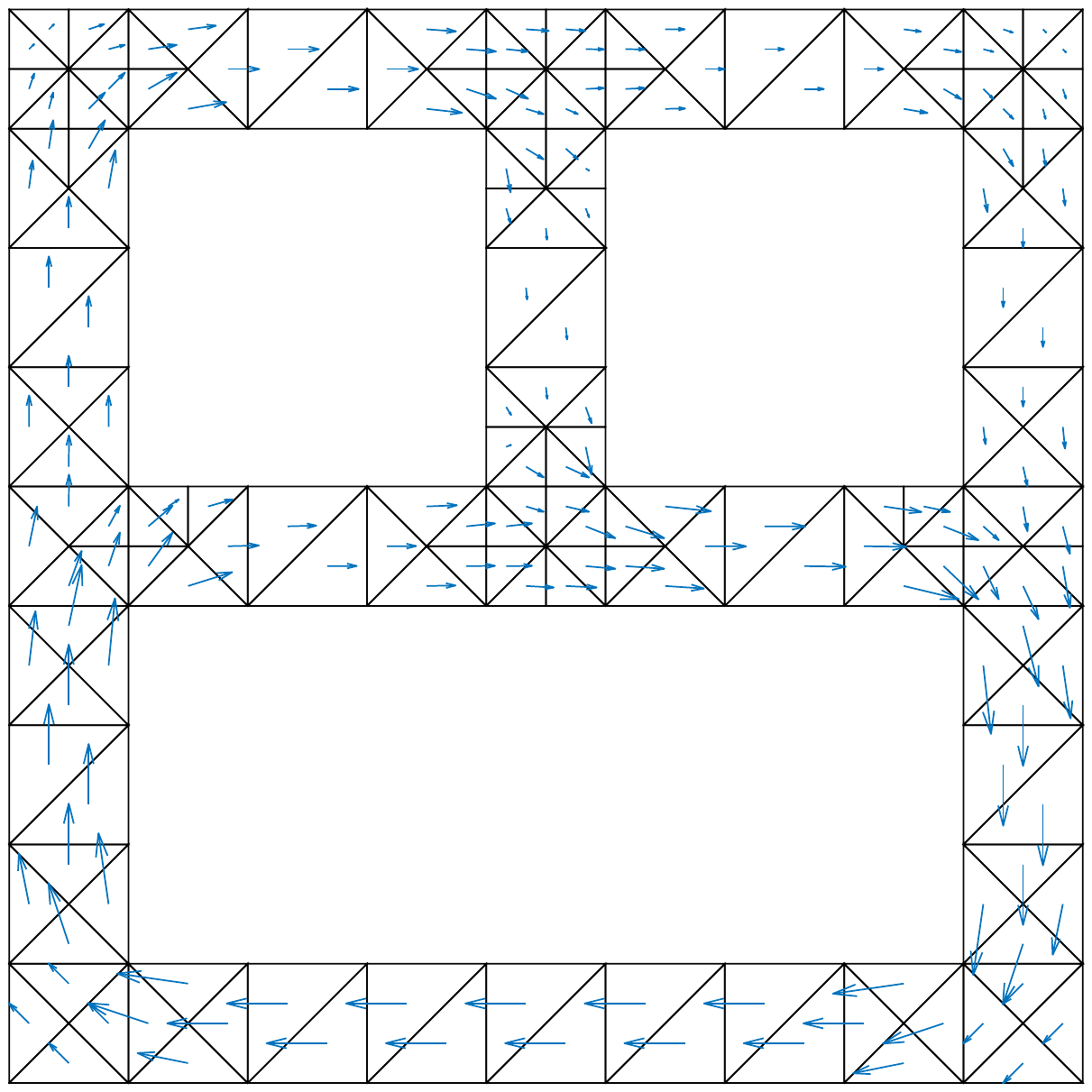}
\includegraphics[scale=.5]{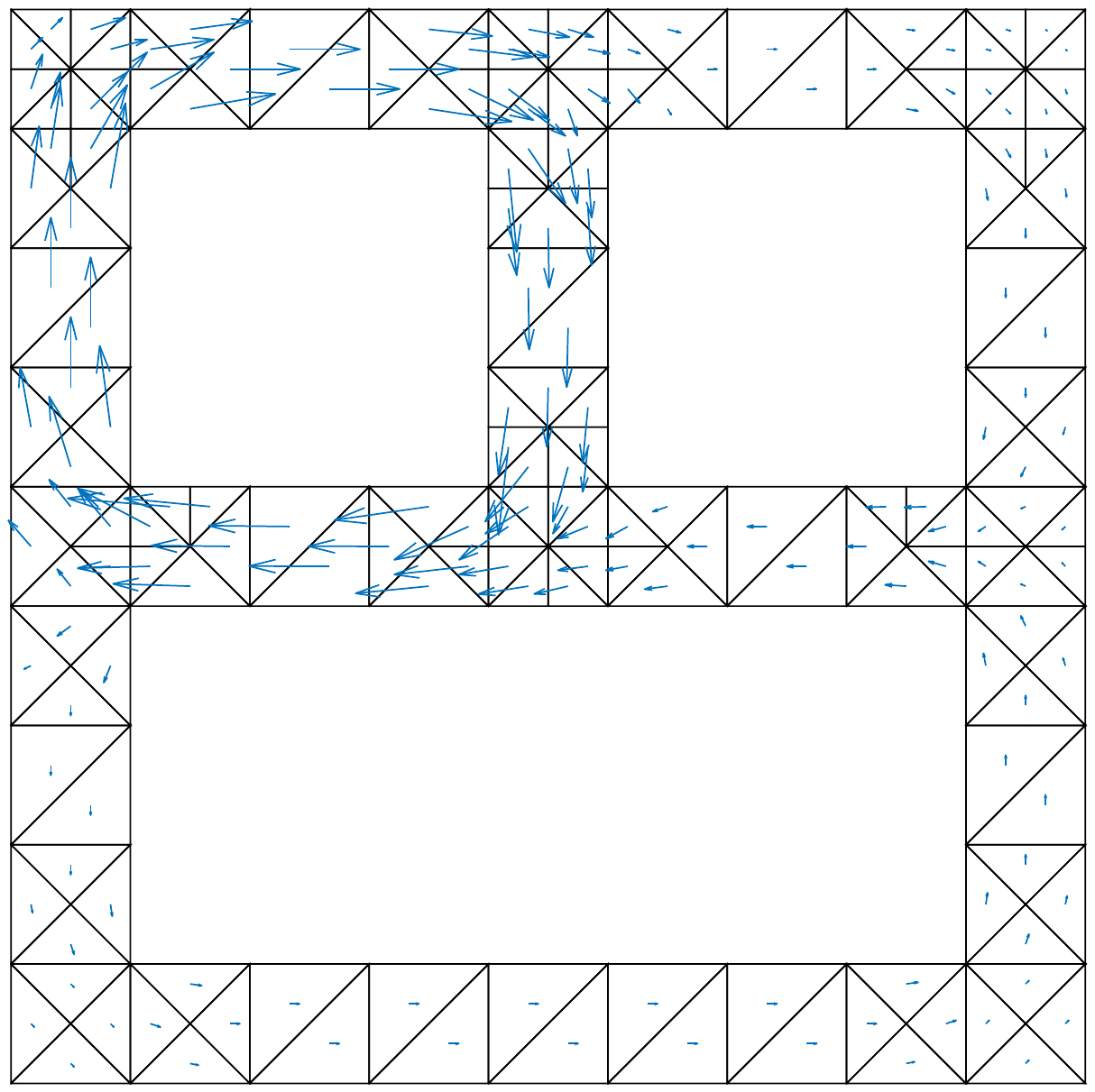}
\caption{Discrete harmonic basis elements $q_\ell^1$ (upper left), $q_\ell^2$ (upper right), and $q_\ell^3$ (lower) computed on a coarse mesh.}
\label{fig3}
\end{figure}

 \setlength{\unitlength}{.75cm}
\begin{figure}[h]
\centering
%\hspace{-1cm}
\includegraphics[scale=.5]{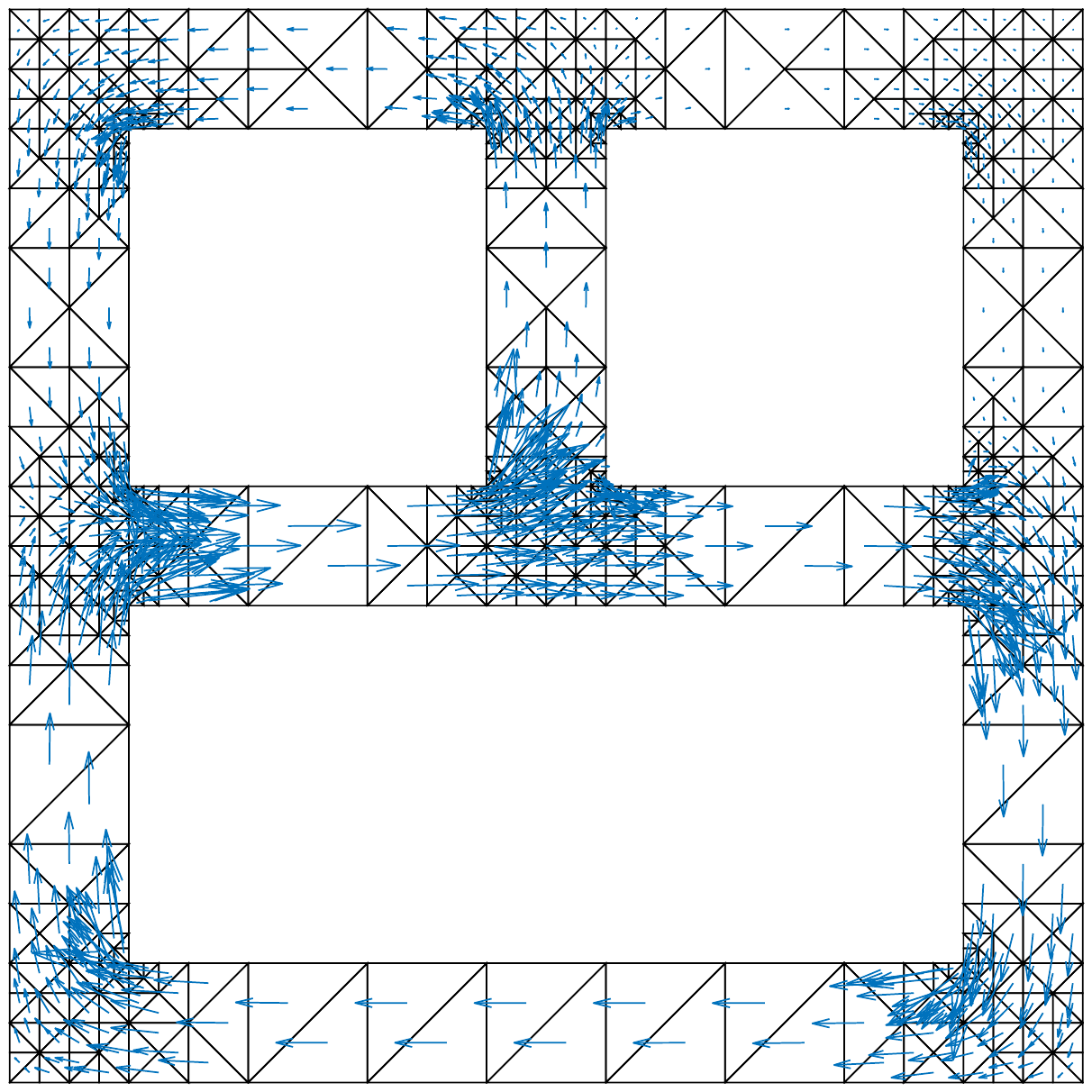}
\includegraphics[scale=.5]{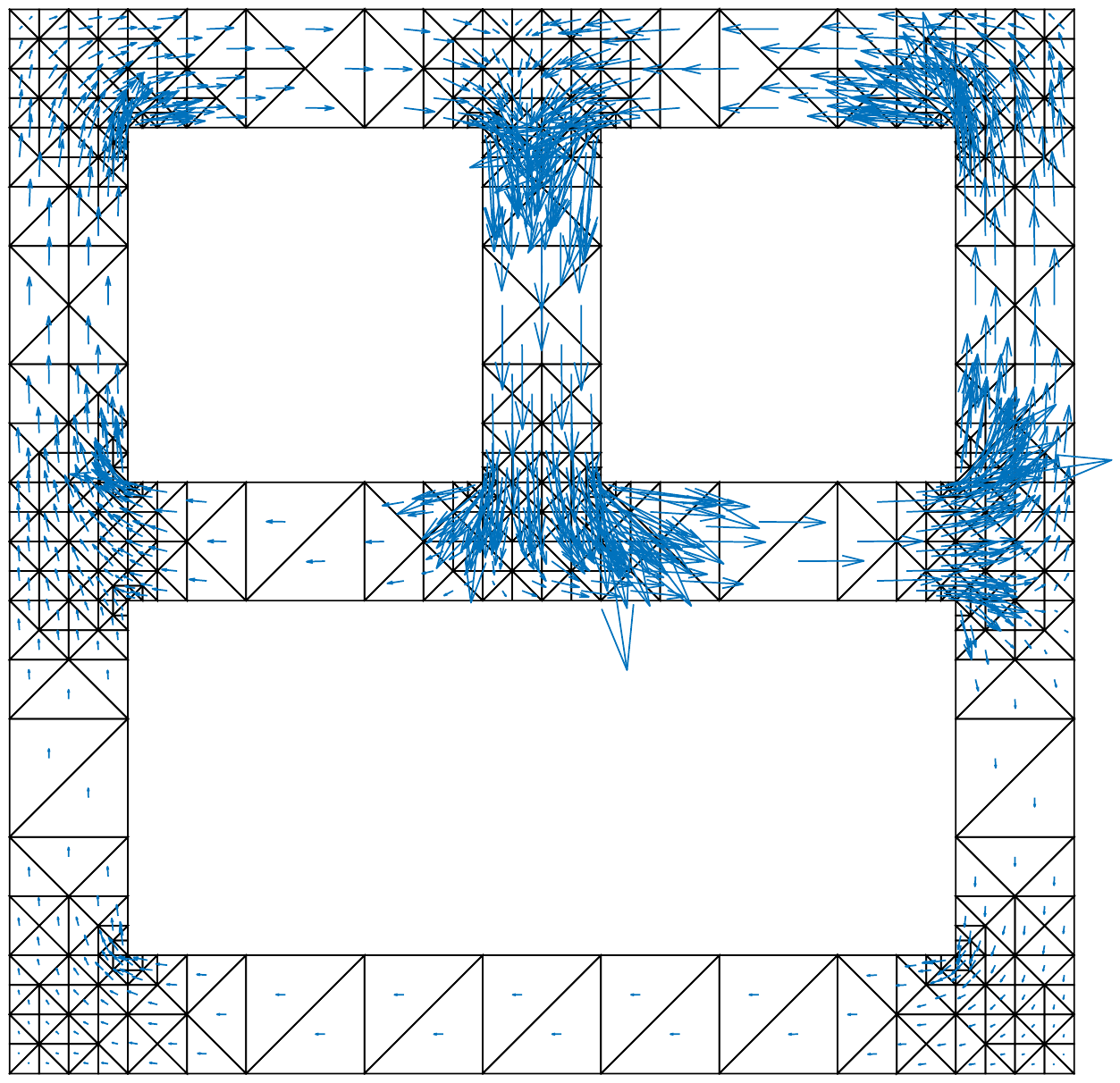}
\includegraphics[scale=.5]{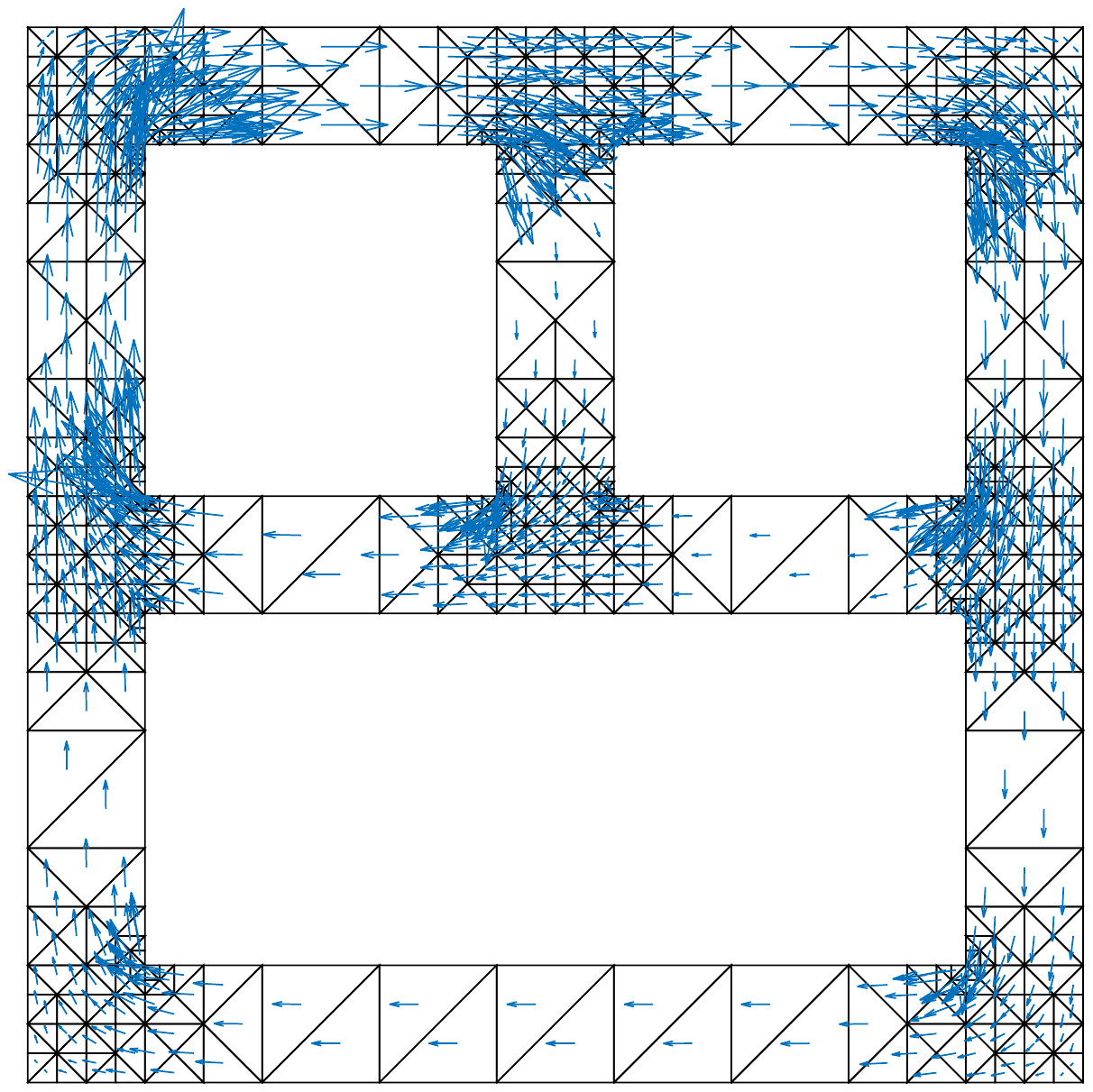}
\caption{Discrete harmonic basis elements $q_{\tilde{\ell}}^1$ (upper left), $q_{\tilde{\ell}}^2$ (upper right), and $q_{\tilde{\ell}}^3$ (lower) computed on a finer mesh.}
\label{fig4}
\end{figure}

%\section*{Acknowledgments}{The author would like to thank Long Chen for valuable discussions about the paper.}

%\section*{Acknowledgments}{The author would like to thank Long Chen for valuable discussions about the paper.}

%\bibliographystyle{siam}
%\bibliography{/Users/demlow/svn/bibs/demlow}

\end{document}